\documentclass[11pt]{article}
\usepackage{amsmath, amssymb, amsthm}
\usepackage{verbatim}
\usepackage{multicol}
\usepackage{enumerate}
\usepackage{comment}
\usepackage[none]{hyphenat}
\usepackage{hyperref}
\hypersetup{
	colorlinks=true,
	linkcolor=blue,
	filecolor=magenta,
	urlcolor=cyan,
	citecolor=blue
}

\usepackage{pgf}
\usepackage{tikz}
\usepackage{ifthen}
\usetikzlibrary{math}
\usetikzlibrary{positioning,arrows,shapes,decorations.markings,decorations.pathreplacing,matrix,patterns}
\tikzstyle{vertex}=[circle,draw=black,fill=black,inner sep=0,minimum size=3pt,text=white,font=\footnotesize]

\date{}
\title{\vspace{-0.8cm}On the chromatic number of disjointness graphs of curves}
\author{
	J\'{a}nos Pach \thanks{\'{E}cole Polytechnique F\'{e}d\'{e}rale de Lausanne, Research partially supported by Swiss National Science Foundation grants no. 200020-162884 and 200021-175977. \emph{e-mail}: \textbf{\{janos.pach, istvan.tomon\}@epfl.ch}}
	\and
	Istv\'{a}n Tomon \footnotemark[1]	
}

\oddsidemargin  0pt
\evensidemargin 0pt
\marginparwidth 40pt
\marginparsep 10pt
\topmargin 10pt
\headsep 10pt
\textheight 8.7in
\textwidth 6.6in

\theoremstyle{plain}
\newtheorem{theorem}{Theorem}
\newtheorem{definition}[theorem]{Definition}
\newtheorem{corollary}[theorem]{Corollary}
\newtheorem{claim}[theorem]{Claim}
\newtheorem{lemma}[theorem]{Lemma}

\newtheorem{problem}[theorem]{Problem}

\theoremstyle{definition}

\DeclareMathOperator{\sg}{sg}

\begin{document}

\maketitle
\sloppy

\begin{abstract}
Let $\omega(G)$ and $\chi(G)$ denote the clique number and chromatic number of a graph $G$, respectively.
The {\em disjointness graph} of a family of curves (continuous arcs in the plane) is the graph whose vertices correspond to the curves and in which two vertices are joined by an edge if and only if the corresponding curves are disjoint. A curve is called {\em $x$-monotone} if every vertical line intersects it in at most one point. An $x$-monotone curve is {\em grounded} if its left endpoint lies on the $y$-axis.

We prove that if $G$ is the disjointness graph of a family of grounded $x$-monotone curves such that $\omega(G)=k$, then $\chi(G)\leq \binom{k+1}{2}$. If we only require that every curve is $x$-monotone and intersects the $y$-axis, then we have $\chi(G)\leq \frac{k+1}{2}\binom{k+2}{3}$. Both of these bounds are best possible. The construction showing the tightness of the last result settles a 25 years old problem: it yields that there exist $K_k$-free disjointness graphs of $x$-monotone curves such that any proper coloring of them uses at least $\Omega(k^{4})$ colors. This matches the upper bound up to a constant factor.
\end{abstract}

\section{Introduction}

Given a family of sets, $\mathcal{C}$, the \emph{intersection graph of $\mathcal{C}$} is the graph, whose vertices correspond to the elements of $\mathcal{C}$, and two vertices are joined by an edge if the corresponding sets have a nonempty intersection. Also, the \emph{disjointness graph of $\mathcal{C}$} is the complement of the intersection graph of $\mathcal{C}$, that is, two vertices are joined by an edge if the corresponding sets are disjoint. As usual, we denote the {\em clique number,} the {\em independence number}, and the {\em chromatic number} of a graph $G$ by $\omega(G), \alpha(G)$ and $\chi(G)$, respectively.

\paragraph{Clique number vs. chromatic number.} Computing these parameters for intersection graphs of various classes of geometric objects (segments, boxes, disks etc.) or for other geometrically defined graphs (such as visibility graphs) is a computationally hard problem and a classic topic in computational and combinatorial geometry \cite{AgMu, CaCa, FP11, Chaya, KrM, KrNe}. There are many interesting results connecting the clique number and the chromatic number of geometric intersection graphs, starting with a beautiful theorem of Asplund and Gr\"{u}nbaum \cite{AG}, which states that every intersection graph $G$ of axis-parallel rectangles in the plane satisfies $\chi(G)\le 4(\omega(G))^2$.

A family $\mathcal{G}$ of graphs is \emph{$\chi$-bounded} if there exists a function $f:\mathbb{Z}^{+}\rightarrow \mathbb{Z}^{+}$ such that every $G\in\mathcal{G}$ satisfies $\chi(G)\leq f(\omega(G))$. In this case, say that the function $f$ is \emph{$\chi$-bounding for $\mathcal{G}$}. Using this terminology, the result of Asplund and Gr\"{u}nbaum \cite{AG} mentioned above can be rephrased as follows: The family of intersection graphs of axis-parallel rectangles in the plane is $\chi$-bounded with bounding function $f(k)=4k^2$. (It is conjectured that the same is true with bounding function $f(k)=O(k)$.) However, an ingenious construction of Burling \cite{B} shows that the family of intersection graphs of axis-parallel boxes in $\mathbb{R}^{3}$ is {\em not} $\chi$-bounded. The $\chi$-boundedness of intersection graphs of chords of a circle was established by Gy\'arf\'as~\cite{Gy, Gy1}; see also Kostochka {\em et al.}~\cite{Ko1, KK}. Deciding whether a family of graphs is $\chi$-bounded is often a difficult task \cite{Ko2}.

Computing the chromatic number of the {\em disjointness graph} of a family of objects, $\mathcal{C}$, is equivalent to determining the {\em clique cover number} of the corresponding intersection graph $G$, that is, the minimum number of cliques whose vertices together cover the vertex set of $G$. This problem can be solved in polynomial time only for some very special families (for instance, if $\mathcal{C}$ consists of intervals along a line or arcs along a circle~\cite{Gav}). On the other hand, the problem is known to be NP-complete if $\mathcal{C}$ is a family of chords of a circle~\cite{KeS, GaJ} or a family of unit disks in the plane~\cite{Sup, Cer}, and in many other cases. There is a vast literature providing approximation algorithms or inapproximability results for the clique cover number~\cite{Dum, Eid}.

\paragraph{Families of curves.} A \emph{curve} or {\em string} in $\mathbb{R}^{2}$ is the image of a continuous function $\phi:[0,1]\rightarrow \mathbb{R}^{d}$. A curve $C\subset\mathbb{R}^{2}$ is called \emph{$x$-monotone} if every vertical line intersects $C$ in at most one point. We say that $C$  is \emph{grounded at the curve $L$} if one of the endpoints of $C$ is in $L$, and this is the only intersection point of $C$ and $L$. A {\em grounded $x$-monotone curve} is an $x$-monotone curve that is contained in the half-plane $\{x\geq 0\}$, and whose left endpoint lies on the vertical line $\{x=0\}$.

It was first suggested by Erd\H os in the 1970s, and remained the prevailing conjecture for 40 years, that the family of intersection graphs of curves (the family of so-called ``string graphs'') is $\chi$-bounded~\cite{Bra, KoN}. There were many promising facts pointing in this direction. Extending earlier results of McGuinness~\cite{M}, Suk~\cite{Suk}, and Laso\'n~{\em et al.}~\cite{Las}, Rok and Walczak~\cite{Rok1, Rok2} proved the conjecture for {\em grounded} families of curves. Nevertheless, in 2014, Pawlik {\em et al.}~\cite{PKKLMTW} disproved Erd\H os's conjecture. They managed to modify Burling's above mentioned construction to obtain a sequence of finite families of {\em segments} in the plane whose intersection graphs, $G_n$, are triangle-free (that is, $\omega(G_n)=2$), but their chromatic numbers tend to infinity, as $n\rightarrow\infty$.

Recently, Pach, Tardos and T\'{o}th~\cite{PTT} proved that the family of {\em disjointness graphs} of curves in the plane is not $\chi$-bounded either; see also~\cite{MWW}. However, the situation is different if we restrict our attention to {\em $x$-monotone} curves. It was shown in~\cite{PaT, Lar} that the family of {disjointness graphs} of $x$-monotone curves in the plane is $\chi$-bounded with a bounding function $f(k)=k^4$. For {\em grounded $x$-monotone} curves, the same proof provides a better bounding function: $f(k)=k^2$. These results proved 25 years ago were not likely to be tight. However, in spite of many efforts, no-one has managed to improve them or to show that they are optimal.

\paragraph{Our results.} The aim of the present paper is to fill this gap. We proved, much to our surprise, that the order of magnitude of the last two bounds cannot be improved. In fact, in the case of grounded $x$-monotone curves, we determined the exact value of the best bounding function for every $k\ge 2$. To the best of our knowledge, this is the first large family of non-perfect geometric disjointness graphs, for which one can precisely determine the best bounding function.

\begin{theorem}\label{upperbound}
	Let $G$ be the disjointness graph of a family of grounded $x$-monotone curves. If ${\omega(G)=k}$, then $\chi(G)\leq \binom{k+1}{2}$.
\end{theorem}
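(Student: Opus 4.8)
The plan is to produce an explicit proper coloring of $G$ whose color set is a set of ordered pairs of positive integers, arranged so that only $\binom{k+1}{2}$ pairs ever occur. (Recall that a proper coloring of $G$ is exactly a partition of the curves into pairwise-intersecting families, so this is what we want to build.) Throughout I identify a grounded $x$-monotone curve $C$ with the graph of a function $f_C\colon[0,a(C)]\to\mathbb{R}$, where $a(C)$ is the $x$-coordinate of its right endpoint. The first thing to record is the elementary observation that two disjoint such curves are \emph{comparable} on their common domain $[0,\min(a(C),a(C'))]$: one lies strictly below the other there (by continuity, since they never meet and both touch the $y$-axis), so in particular the heights of the left endpoints of disjoint curves differ.

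Next I would attach two statistics to each curve $C$. Let $\beta(C)$ be $1$ plus the largest size of a family $\mathcal{D}$ of pairwise disjoint curves, none of them equal to $C$, such that every $D\in\mathcal{D}$ satisfies both $a(D)\le a(C)$ and ``$D$ lies strictly below $C$ on $[0,a(D)]$''; define $\tau(C)$ in the same way with ``above'' in place of ``below''. Since $C$ is disjoint from every curve in such a family, $\mathcal{D}\cup\{C\}$ is a clique of $G$, so $\beta(C),\tau(C)\le k$. The crucial point is the inequality $\beta(C)+\tau(C)\le k+1$: if $\mathcal{D},\mathcal{E}$ realize $\beta(C)$ and $\tau(C)$, then the right-extent bounds $a(D),a(E)\le a(C)$ force that on the common domain of any $D\in\mathcal{D}$ and $E\in\mathcal{E}$ one has $f_D<f_C<f_E$, so $D$ and $E$ are disjoint; hence $\mathcal{D}\cup\mathcal{E}\cup\{C\}$ is a clique of $G$ of size $\beta(C)+\tau(C)-1$, which must be at most $k$. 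Since the number of pairs of positive integers summing to at most $k+1$ is $1+2+\cdots+k=\binom{k+1}{2}$, coloring each $C$ by the pair $(\beta(C),\tau(C))$ uses at most $\binom{k+1}{2}$ colors.

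The remaining and most delicate task is to show this coloring is proper, i.e.\ that disjoint curves $C,C'$ receive distinct pairs. Here I would assume $C\prec C'$ on their common domain and split into two cases according to which of $a(C),a(C')$ is larger. If $a(C)\le a(C')$, then $C$ lies below $C'$ on all of $[0,a(C)]$, and a family witnessing $\beta(C)$, together with $C$ itself, is a legal ``below $C'$'' family for $\beta(C')$ — for this one needs that ``lies strictly below'' composes along the chain of domains $[0,a(D)]\subseteq[0,a(C)]\subseteq[0,a(C')]$ and that the right-extent bounds are inherited — so $\beta(C')\ge\beta(C)+1$. If instead $a(C)>a(C')$, then $C'$ lies entirely above $C$ on its whole (shorter) domain, so $C'$ adjoined to a family witnessing $\tau(C')$ is a legal ``above $C$'' family, giving $\tau(C)\ge\tau(C')+1$. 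In either case the pairs differ. I expect the bookkeeping in this last part — verifying that each of the three conditions (pairwise disjointness, the below/above relation on the correct common domain, and especially the right-extent bound $a(\cdot)\le a(C)$) is genuinely inherited when passing from $C$ to $C'$ — to be where essentially all the content sits; once the definitions of $\beta$ and $\tau$ are pinned down so that the right-extent restriction is available exactly where it is needed, both the count $\beta+\tau\le k+1$ and the two monotonicity arguments become formal.
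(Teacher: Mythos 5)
Your argument is correct, and it takes a genuinely different route from the paper's. The paper factors the proof through a purely combinatorial intermediate: it orders the curves by the height of their left endpoints, observes (Lemma~\ref{semiposet0}) that the resulting ordered disjointness graph avoids a particular 4-vertex ordered pattern (the class of \emph{semi-comparability graphs}), and then proves abstractly (Lemma~\ref{semiposet}) that any such ordered graph with clique number $k$ has chromatic number at most $\binom{k+1}{2}$, by first partitioning vertices according to the size of the largest clique with minimum element $v$, showing each part is a comparability graph, and applying Dilworth's theorem within each part. Your proof instead works directly with the geometry, attaching to each curve $C$ a pair $(\beta(C),\tau(C))$ of clique-depth statistics restricted by the right-extent condition $a(\cdot)\le a(C)$; that restriction is exactly what makes the count $\beta+\tau\le k+1$ and the two monotonicity arguments go through, and you do not need to invoke the ordered 4-vertex pattern at all. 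What the paper's route buys is a reusable combinatorial lemma (semi-comparability graphs are $\chi$-bounded with $f(k)=\binom{k+1}{2}$), which is applied again in Theorem~\ref{cor2}; your route buys a shorter, self-contained geometric proof. It is also worth noting that your use of the right-extent statistic plays the same role as the second total order $<_2$ in the paper's notion of \emph{magical} graph, and the structure of your argument (pairing two ``chain-length'' statistics that together are bounded by $k+1$) is in fact closer in spirit to the paper's proof of Theorem~\ref{dmagicalupperbound} for double-magical graphs than to its proof of Lemma~\ref{semiposet}.
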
	

\begin{theorem}\label{construction}
	For every positive integer $k\geq 2$, there exists a family $\mathcal{C}$ of grounded $x$-monotone curves such that if $G$ is the disjointness graph of $\mathcal{C}$, then $\omega(G)=k$ and $\chi(G)=\binom{k+1}{2}$.
\end{theorem}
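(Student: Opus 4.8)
By Theorem~\ref{upperbound} it suffices to construct, for every $k\ge 2$, a family $\mathcal{C}_k$ of grounded $x$-monotone curves whose disjointness graph $G$ satisfies $\omega(G)\le k$ and which cannot be covered by $\binom{k+1}{2}-1$ pairwise intersecting subfamilies. A useful orienting observation: two grounded $x$-monotone curves that intersect must in fact cross (they are graphs of functions on overlapping $x$-intervals with distinct left endpoints), and if the relation ``lies strictly below on the common domain'' were transitive, then $G$ would be a comparability graph, hence perfect. So any example with $\chi>\omega$ must exploit \emph{short} curves that dip below some longer ones and climb above others, destroying transitivity; and to widen the gap all the way to $\binom{k+1}{2}-k=\binom{k}{2}$ this has to be done self-similarly, in the spirit of the classical Burling and Pawlik et al.\ examples. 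I would therefore build $\mathcal{C}_k$ by recursion on $k$, taking $\mathcal{C}_1$ to be a single curve, so that $\chi=\omega=1=\binom{2}{2}$.

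For the recursive step, fix a large integer $N=N(k)$ and take $N$ nested, rescaled and translated copies $\mathcal{C}_{k-1}^{(1)},\dots,\mathcal{C}_{k-1}^{(N)}$ of $\mathcal{C}_{k-1}$, arranged so that every two curves belonging to different copies meet, together with a gadget of new ``probe'' curves issuing from the $y$-axis and weaving through the copies: each probe is disjoint from every curve of the copies it passes below and meets every curve of the copies it passes above, the incidence pattern between probes and copies varying monotonically (in half-graph fashion) with the index of the copy. The geometric content of this step --- which I expect to require an explicit coordinate assignment, or a routine ``stretching'' argument --- is that all the prescribed crossing/disjointness incidences can be realized \emph{simultaneously} by grounded $x$-monotone curves; the nesting of the copies is precisely the device that lets a single short probe be disjoint from an entire copy while the copies themselves stay mutually ``parallel''.

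The clique number is then pinned down directly. One obtains $k$ pairwise disjoint curves as $k-1$ pairwise disjoint curves inside a single copy $\mathcal{C}_{k-1}^{(i)}$ (available since $\omega(\mathcal{C}_{k-1})=k-1$) together with one probe lying below that copy. Conversely, a set of pairwise disjoint curves contains at most $\omega(\mathcal{C}_{k-1})=k-1$ curves from any one copy and none from a second copy (curves from different copies meet), and then at most one further probe: two probes disjoint from a common copy must themselves meet, for otherwise they would extend the $k-1$ disjoint curves of that copy to $k+1$ pairwise disjoint curves, which the half-graph design forbids. Hence $\omega(\mathcal{C}_k)=k$.

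The core of the proof is the bound $\chi(\mathcal{C}_k)\ge\binom{k+1}{2}$, proved by induction. The aim is to show that any proper coloring of $G$ with fewer than $\binom{k+1}{2}$ colors induces, on some copy $\mathcal{C}_{k-1}^{(i)}$, a proper coloring with fewer than $\binom{k}{2}$ colors, contradicting the inductive hypothesis $\chi(\mathcal{C}_{k-1})=\binom{k}{2}$. Concretely, the probe gadget must guarantee that in every economical coloring some copy $\mathcal{C}_{k-1}^{(i)}$ is barred from at least $k$ of the available colors --- for instance because $k$ probes, each adjacent in $G$ to all of that copy, receive $k$ distinct colors, after which the copy is confined to at most $\binom{k+1}{2}-1-k=\binom{k}{2}-1$ colors. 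The delicate point is that any two probes adjacent to a common copy are \emph{non}-adjacent to one another (else $\omega>k$), so the copy cannot by itself force their colors apart; the required separation of those $k$ colors must instead be squeezed out of the probes' interactions with the \emph{other} copies and probes, through a pigeonhole/counting argument over the $N$ copies --- and this is exactly where $N$ has to be taken large and the gadget calibrated so that the per-level increment is \emph{precisely} $k$, rather than the cruder $\Omega(k)$ that would already yield the $\Omega(k^2)$ lower bound. This calibration, together with the realizability check for the construction, is the main obstacle; once both are in hand the induction goes through, and combined with Theorem~\ref{upperbound} it gives $\omega(\mathcal{C}_k)=k$ and $\chi(\mathcal{C}_k)=\binom{k+1}{2}$.
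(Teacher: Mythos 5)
Your proposal sketches a recursive Burling/Pawlik-style construction, which is a genuinely different approach from the paper's: the paper first shows (Lemmas~\ref{magical1} and~\ref{magical2}) that disjointness graphs of grounded $x$-monotone curves are exactly the ``magical'' double-ordered graphs, and then constructs a magical graph with the required parameters \emph{probabilistically}, taking the minimal magical closure of a random graph supported on a grid of $\binom{k+1}{2}$ point clusters, controlling $\omega$ via a combinatorial ``no-hole'' lemma (Claim~\ref{matrix}) and controlling $\chi$ from below by showing the independence number is small. There is no recursion, no explicit probes, and no inductive coloring argument anywhere in the paper.

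More importantly, your sketch has a genuine gap, which you yourself flag: you need $k$ probes, each disjoint from (hence adjacent in $G$ to) an entire copy, to receive $k$ distinct colors in every proper coloring, but those probes must be pairwise intersecting (pairwise non-adjacent in $G$) or $\omega$ would exceed $k$ --- so nothing local forces their colors apart. You propose to extract this separation from interactions with other copies and probes by ``a pigeonhole/counting argument over the $N$ copies,'' and you concede that this calibration, together with the realizability of all prescribed incidences by grounded $x$-monotone curves, is ``the main obstacle.'' Those are not loose ends; they are the entire content of the theorem. The clique-number argument also has a soft spot: ``two probes disjoint from a common copy must themselves meet, for otherwise they would extend the $k-1$ disjoint curves \dots which the half-graph design forbids'' is circular until the gadget is specified --- you are invoking $\omega\le k$ in the course of proving $\omega\le k$. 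It is also worth noting that Burling-type recursions naturally produce chromatic number growing exponentially in the recursion depth while keeping $\omega$ fixed, which makes it unclear that a per-level increment of \emph{exactly} $k$ (and not more) can be arranged; the paper sidesteps this entirely by arguing via independence number rather than by tracking how colors are forced. As written, this is a plan for a proof, not a proof.
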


It turns out that disjointness graphs of grounded $x$-monotone curves can be characterized by two total orders defined on their vertex sets that satisfy some special properties. This observation is the key idea behind the proof of the above two theorems.

The disjointness graph of any collection of $x$-monotone curves, each of which intersects a given vertical line (the $y$-axis, say), is the intersection of two disjointness graphs of grounded $x$-monotone curves. The methods used for proving Theorems~\ref{upperbound} and~\ref{construction} can be extended to such disjointness graphs and yield sharp bounds.

\begin{theorem}\label{upperbound2}
	Let $G$ be the disjointness graph of a family $\mathcal{C}$ of $x$-monotone curves such that all elements of $\mathcal{C}$ have nonempty intersection with a vertical line $l$. If ${\omega(G)=k}$, then $\chi(G)\leq \frac{k+1}{2}\binom{k+2}{3}$.
\end{theorem}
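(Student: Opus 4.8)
The plan is to deduce Theorem~\ref{upperbound2} from the grounded case, Theorem~\ref{upperbound}. Choose coordinates so that $l$ is the $y$-axis, and for each $C\in\mathcal{C}$ let $p(C)$ be its unique point on $l$. Cutting $C$ at $p(C)$ produces a left arc and a right arc, each $x$-monotone and each meeting $l$ only at $p(C)$; the right arc is already a grounded $x$-monotone curve, and reflecting the left arc across the $y$-axis makes it one too. Let $\mathcal{C}_1$ consist of the reflected left arcs, $\mathcal{C}_2$ of the right arcs, and let $G_1,G_2$ be their disjointness graphs on the common vertex set $V=\mathcal{C}$. Two curves of $\mathcal{C}$ are disjoint if and only if their left arcs are disjoint \emph{and} their right arcs are disjoint, so $E(G)=E(G_1)\cap E(G_2)$. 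The first, easy consequence is the one I would lean on throughout: if $A\subseteq V$ is a clique of $G_1$, then every two vertices of $A$ already have disjoint left arcs, hence $G[A]=G_2[A]$, which is the disjointness graph of a family of grounded $x$-monotone curves and has clique number at most $\omega(G)=k$; by Theorem~\ref{upperbound}, $\chi(G[A])\le\binom{k+1}{2}$, and symmetrically for cliques of $G_2$. So each side ``flattens'' the problem, but only on its own cliques.

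Next I would invoke the structural description underlying Theorem~\ref{upperbound}: the disjointness graph $H$ of a family of grounded $x$-monotone curves is governed by two linear orders on $V(H)$ with the special properties used there, and in particular admits a \emph{layering}. Ordering the curves of $H$ by the height of their grounded endpoint, the curves of any clique of $H$ are linearly ordered by that height; assigning to $v$ the size $\lambda(v)$ of the longest clique of $H$ topped by $v$ and the symmetric quantity $\lambda^{*}(v)$ measured from above, the pairs $(\lambda(v),\lambda^{*}(v))$ satisfy $\lambda(v)+\lambda^{*}(v)\le\omega(H)+1$, run over a set of size $\binom{\omega(H)+1}{2}$, and change along every edge of $H$ — this is the $\binom{k+1}{2}$-coloring of Theorem~\ref{upperbound}. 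I would run this layering for $G_1$ and for $G_2$ simultaneously, but \emph{nested rather than independently}: build out of the two layerings a ``combined level'' parameter $j(C)\in\{1,\dots,k\}$, then a sub-level $a(C)\in\{1,\dots,j(C)\}$, and finally two further coordinates $b(C),c(C)\in\{1,\dots,a(C)\}$, one contributed by each side, so that the number of resulting colours is $\sum_{j=1}^{k}\sum_{a=1}^{j}a^{2}=\tfrac{k+1}{2}\binom{k+2}{3}$, and prove that the $4$-tuple $(j(C),a(C),b(C),c(C))$ changes along every edge of $G$.

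The main obstacle is exactly the design and analysis of this nesting. The naive product of the two one-sided colorings is worthless, since $\omega(G_1)$ and $\omega(G_2)$ are not bounded in terms of $\omega(G)=k$ (e.g.\ all the right arcs could coincide). What saves us is the flattening highlighted above: along a clique $Q$ of $G_1$ — a maximal ``chain'' for the $G_1$-layering — one has $G[Q]=G_2[Q]$ with clique number at most $k$, so the $G_2$-layering \emph{restricted to $Q$} already lives in a range of size $\binom{k+1}{2}$, and the same with the roles of $G_1,G_2$ reversed. The work is to turn this into a globally consistent choice of the four capped parameters: one must pick the two underlying linear orders on each side so that measuring the $G_2$-depth of $C$ only inside the $G_1$-clique realizing the $G_1$-depth of $C$ (and capping it accordingly) still yields quantities monotone enough along disjoint pairs to be forced to change on every edge of $G$, and one must check that no clique of $G$ is spread over more ``levels'' than the count allows. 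Once this bookkeeping is in place the coloring and the bound are immediate, and the matching lower bound — a construction in the spirit of Theorem~\ref{construction} — shows that the constant $\tfrac{k+1}{2}\binom{k+2}{3}$ cannot be lowered.
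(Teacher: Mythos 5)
Your overall architecture matches the paper's: cutting each curve at $l$ into a left arc and a right arc, observing that $E(G)=E(G_1)\cap E(G_2)$ where $G_1,G_2$ are disjointness graphs of two families of grounded $x$-monotone curves, and aiming for a nested coloring counted by $\sum_{j=1}^{k}\sum_{a=1}^{j}a^{2}=\tfrac{k+1}{2}\binom{k+2}{3}$. The observation that $G[A]=G_2[A]$ for any clique $A$ of $G_1$ is correct. However, as a proof this has a genuine gap: you never define the four parameters $j,a,b,c$, you never prove that the quadruple changes along every edge of $G$, and you explicitly flag both as undone (``the work is to turn this into a globally consistent choice of the four capped parameters,'' ``once this bookkeeping is in place\dots''). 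That bookkeeping \emph{is} the proof, not a detail to be filled in.

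Moreover, the direction you sketch for closing the gap---``measuring the $G_2$-depth of $C$ only inside the $G_1$-clique realizing the $G_1$-depth of $C$''---is not what the paper does and it is not clear it can be made to work, because it builds the sub-levels from clique-restricted data rather than from globally defined orders. The paper's Theorem~\ref{dmagicalupperbound} instead defines four \emph{global} partial orders $\prec_1,\dots,\prec_4$ on $V(G)$ from three linear orders: $<_1$ (height on $l$), $<_2$ (length of the left arc), $<_3$ (length of the right arc). Here $a\prec_1 b$ means $ab\in E(G)$ with $a<_1 b$, $a<_2 b$, $a<_3 b$; the other three are the remaining sign patterns with $a<_1b$. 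Every edge of $G$ falls into exactly one $\prec_i$, and---this is the crucial content you are missing---the magical-graph structure forces the transitivity-type properties that $a\prec_1 b$ and $b\prec_i c$ imply $ac\in E(G)$, and $a\prec_i b$ and $b\prec_2 c$ imply $ac\in E(G)$. Only with those in hand does the nesting close: vertices are stratified into $S_h$ by $\prec_1$-chain length (so $S_h$ has clique number at most $h$), then into $S_{h,m}$ by $\prec_2$-chain length inside $S_h$ (so $S_{h,m}$ has clique number at most $m$), and $S_{h,m}$ is then properly colored with $m^2$ colors by $\prec_3$- and $\prec_4$-chain lengths. Nothing in your sketch supplies those two transitivity properties, and without them the capped parameters you propose need not change along an edge of $G$, so the coloring need not be proper. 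The lower-bound remark at the end is also out of scope here and does nothing for the upper bound you are asked to prove.
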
	

\begin{theorem}\label{construction2}
	For every positive integer $k\geq 2$, there exists a family $\mathcal{C}$ of $x$-monotone curves such that all elements of $\mathcal{C}$ have nonempty intersection with a vertical line $l$, the disjointness graph $G$ of $\mathcal{C}$ satisfies $\omega(G)=k$, and $\chi(G)=\frac{k+1}{2}\binom{k+2}{3}$.
\end{theorem}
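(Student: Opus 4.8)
The plan is to pass to grounded curves via the splitting alluded to before the statement, and then to build the required family by combining two copies of the extremal configuration of Theorem~\ref{construction}. An $x$-monotone curve that meets the vertical line $l$ meets it in exactly one point; cutting there and reflecting the portion lying on the far side of $l$ exhibits the curve as the union of two grounded $x$-monotone curves sharing their endpoint on $l$. If the curves of $\mathcal C$ are given pairwise distinct intersection points with $l$, then two of them are disjoint if and only if their ``right'' parts are disjoint \emph{and} their ``left'' parts are disjoint; conversely, any two families $\mathcal R,\mathcal L$ of grounded $x$-monotone curves on a common index set $V$ that assign each index the same grounding height glue, one on each side of $l$, into a family of the required kind. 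So it suffices to produce such $\mathcal R,\mathcal L$ whose disjointness graphs $G^{\mathcal R},G^{\mathcal L}$ satisfy $\omega(G^{\mathcal R}\cap G^{\mathcal L})=k$ and $\chi(G^{\mathcal R}\cap G^{\mathcal L})=\frac{k+1}{2}\binom{k+2}{3}$; the matching upper bound on $\chi$ is already supplied by Theorem~\ref{upperbound2}, so only the clique number and the \emph{lower} bound on $\chi$ have to be argued.

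Next I would feed the combination through the order-theoretic description of grounded disjointness graphs that drives Theorems~\ref{upperbound} and~\ref{construction}: such a graph carries two total orders on its vertex set, one of them the order by grounding height, and its cliques are exactly the subsets monotone for both orders. Because $\mathcal R$ and $\mathcal L$ are forced to share the grounding-height order, $G=G^{\mathcal R}\cap G^{\mathcal L}$ is governed by \emph{three} total orders $<_h,<_{\mathcal R},<_{\mathcal L}$, and a clique of $G$ is a subset monotone for all three at once; it is this coincidence of one order that brings the naive product bound $\binom{k+1}{2}^2$ down to $\frac{k+1}{2}\binom{k+2}{3}$, and it is the constraint the construction must honor. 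For the construction I would take $V$ to be an explicit set of size $\frac{k+1}{2}\binom{k+2}{3}$ — for instance the quadruples $(a,b,c,d)$ of positive integers with $a,b\le c\le d\le k$, or, equivalently, chains of length at most $k$ decorated with a pair of side-labels — together with three total orders built so that some $k$-element subset is monotone for all three while no $(k{+}1)$-element subset is. The bound $\omega(G)\le k$ is then immediate, since $G\subseteq G^{\mathcal R}$ and $\mathcal R$ is built, by the method of Theorem~\ref{construction}, to have clique number $k$; the reverse inequality is the single common $k$-clique just described.

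The crux is the lower bound $\chi(G)\ge\frac{k+1}{2}\binom{k+2}{3}$. I would attack it by induction on $k$, imitating the proof of Theorem~\ref{construction}: since $\frac{k+1}{2}\binom{k+2}{3}-\frac{k}{2}\binom{k+1}{3}=1^2+2^2+\cdots+k^2$, the level-$k$ construction should contain many ``shifted'' copies of the level-$(k-1)$ one, arranged so that any proper coloring is forced to spend $1^2+\cdots+k^2$ colors beyond what the copies already require. Concretely, every color class of $G$ is an independent set, i.e. a pairwise-intersecting subfamily; one describes these independent sets combinatorially in terms of the three orders, bounds their size, and then either applies the ratio bound $\chi(G)\ge|V|/\alpha(G)$ — the vertex set and orders being calibrated so that this ratio is exactly $\frac{k+1}{2}\binom{k+2}{3}$ — or, if the ratio bound falls short, extracts a vertex-transitive (Kneser-type) subgraph whose fractional chromatic number already attains the bound, exploiting the product structure coming from the two halves. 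I expect the main obstacle to be this step, and to be a design problem rather than an estimate: one must choose $V$ and the three orders so that the shared-order constraint is respected, the common cliques stay at size $k$, and simultaneously the chromatic number is pushed all the way up to the Theorem~\ref{upperbound2} bound — reconciling all three requirements at once is what makes the construction delicate, and the inductive gluing is where most of the work sits.
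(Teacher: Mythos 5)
Your framing is right in outline: you correctly reduce the problem to building a graph governed by three total orders, one of them shared between the two ``halves'' of the curves, and you correctly identify the target size of the underlying combinatorial skeleton as $\frac{k+1}{2}\binom{k+2}{3}$, with a parametrization by quadruples that is equivalent to the set $S$ the paper uses. You also locate the right tension---keep common cliques at size $k$ while pushing $\chi$ all the way up. What is missing is the mechanism that actually resolves that tension, and it is not the sort of step one can leave as ``a design problem.'' If the vertex set were just the index set of size $|S|$, the ratio bound $\chi\ge |V|/\alpha$ would force $\alpha=1$, i.e.\ a complete graph, whose clique number is $|S|$, not $k$; and no explicit arrangement of three total orders on $|S|$ vertices can simultaneously give $\omega=k$ and $\chi=|S|$, since then $\alpha\le 1$ again. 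So the skeleton alone cannot work.

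The paper's construction closes this gap with a probabilistic blow-up that you do not have. Each point $P(\mathbf{i})$, $\mathbf{i}\in S$, is replaced by a cloud $A_{\mathbf{i}}$ of $n$ nearby points (so the orders on $V$ are inherited from coordinates, and the relative position of points in distinct clouds obeys a lexicographic ``LEX'' property). One places edges randomly with probability $p$ between points of distinct clouds, takes the \emph{minimal magical closures} $G^1_{<_1,<_2}$ and $G^2_{<_1,<_3}$, and sets $G'=G^1\cap G^2$. Two estimates do the work: first, with probability $\ge 2/3$ the random graph (hence $G'$) has no independent set of size $(1+\lambda)n$; second, via the mountain-path description of the closure, the expected number of ``hole'' triples that become triangles in $G'$ is small, so by Markov one can delete $O(h)$ vertices to kill all such triangles. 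After deletion, $|V(G)|\approx |S|n$ and $\alpha(G)\le (1+\lambda)n$, so the ratio bound gives $\chi(G)\ge |S|$; and because no clique can contain a hole triple, a separate deterministic lemma (the analogue of your ``no $(k{+}1)$-set monotone in all three orders'') bounds cliques by $k$. Your proposal has the deterministic lemma in spirit (it corresponds to the paper's Claim~\ref{hole_upperbound}), but the random blow-up, the closure/mountain-path analysis that controls spurious triangles, and the hole-counting that makes the deletion affordable are the heart of the proof, and they are absent from your plan. The ``Kneser-type subgraph'' and ``inductive gluing'' alternatives you float are not developed and are not how the paper proceeds.
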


As we have mentioned before, according to \cite{PaT, Lar}, $k^4$ is a bounding function for disjointness graphs of {\em any} family of $x$-monotone curves. Theorem~\ref{construction2} implies that the order of magnitude of this bounding function is best possible. Actually, we can obtain a little more.

\begin{theorem}\label{cor}
	For any positive integer $k$, let $f(k)$ denote the smallest $m$ such that any $K_{k+1}$-free disjointness graph of $x$-monotone curves can be properly colored with $m$ colors. Then we have
	
	$$\frac{k+1}{2}\binom{k+2}{3}\leq f(k)\leq k^{2}\binom{k+1}{2}.$$
\end{theorem}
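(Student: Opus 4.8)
The left inequality is essentially free. The family $\mathcal C$ furnished by Theorem~\ref{construction2} consists of $x$-monotone curves, so its disjointness graph $G$ is in particular a disjointness graph of $x$-monotone curves; since $\omega(G)=k$ it is $K_{k+1}$-free, and $\chi(G)=\tfrac{k+1}{2}\binom{k+2}{3}$. Hence $f(k)\ge\tfrac{k+1}{2}\binom{k+2}{3}$.

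For the upper bound, let $G$ be a $K_{k+1}$-free disjointness graph of a family $\mathcal C$ of $x$-monotone curves; as the claimed bound is monotone in $k$, assume $\omega(G)=k$. Project each $C\in\mathcal C$ to the interval $I_C\subseteq\mathbb R$ that it spans in the $x$-direction. If $k+1$ of these intervals were pairwise disjoint, the corresponding curves would be pairwise disjoint, i.e.\ would form a $K_{k+1}$ in $G$; so the interval system $\{I_C\}$ has independence number at most $k$. Interval graphs are perfect and their cliques are pencils of intervals through a common point, so $\mathcal C$ splits into $r\le k$ parts $\mathcal C_1,\dots,\mathcal C_r$ such that all curves of $\mathcal C_s$ cross one common vertical line $\ell_s$. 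Writing $\ell_1<\dots<\ell_r$ and sending each curve to the leftmost of these lines that it meets, the graph $G[\mathcal C_s]$ is a disjointness graph of $x$-monotone curves all crossing $\ell_s$, with $k_s:=\omega(G[\mathcal C_s])\le k$, so Theorem~\ref{upperbound2} gives $\chi(G[\mathcal C_s])\le\tfrac{k_s+1}{2}\binom{k_s+2}{3}$.

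The step I expect to be the main obstacle is assembling these colourings into one of $G$ using at most $k^2\binom{k+1}{2}$ colours. Disjoint palettes give only $\sum_{s}\tfrac{k_s+1}{2}\binom{k_s+2}{3}\le k\cdot\tfrac{k+1}{2}\binom{k+2}{3}$, which exceeds $k^2\binom{k+1}{2}$ once $k\ge3$, so Theorem~\ref{upperbound2} cannot be used as a black box here. Instead one should unfold it: a curve assigned to $\ell_s$ begins to the right of $\ell_{s-1}$, so its portion left of $\ell_s$ and its portion right of $\ell_s$ are $x$-monotone curves \emph{grounded} at $\ell_s$, and $G[\mathcal C_s]$ is exactly the edge-intersection of the disjointness graphs of these two grounded families; Theorem~\ref{upperbound} then bounds $\chi(G[\mathcal C_s])$ by $\binom{\omega_s^{-}+1}{2}\binom{\omega_s^{+}+1}{2}$, where $\omega_s^{\pm}$ are the clique numbers of the two sides. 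The real work is to control these quantities globally rather than slab by slab: within a slab, $\omega_s^{-}$ and $\omega_s^{+}$ cannot both be large, since a chain of pairwise disjoint left stubs and a chain of pairwise disjoint right parts at $\ell_s$ can be spliced into a clique of $G$; and a long tower of pairwise disjoint curves that straddles several of the lines $\ell_s$ eats up the budget $\omega(G)=k$, so it cannot recur in many slabs at once. Quantifying one such trade-off — most naturally through the two-order encoding that underlies Theorems~\ref{upperbound} and~\ref{upperbound2}, possibly together with reusing colours between slabs whose lines are far apart — so that the contributions total at most $k^2\binom{k+1}{2}$ is the crux of the proof.
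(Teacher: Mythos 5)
Your lower bound is exactly the paper's: take the family from Theorem~\ref{construction2} and note that $\omega(G)=k$ and $\chi(G)=\frac{k+1}{2}\binom{k+2}{3}$, so $f(k)\ge\frac{k+1}{2}\binom{k+2}{3}$. That part is fine.

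For the upper bound, however, what you have written is not a proof: you yourself flag that ``quantifying the trade-off'' between the left and right clique numbers across slabs ``is the crux of the proof,'' and that crux is left open. Your decomposition (covering the $x$-projection intervals by at most $k$ pencils through vertical lines $\ell_1,\dots,\ell_r$ and coloring each slab separately) does not give enough structure within a slab to beat the $\frac{k_s+1}{2}\binom{k_s+2}{3}$ bound from Theorem~\ref{upperbound2}, and no bound on $\sum_s k_s$ is available, so the naive sum overshoots as you noticed. The splicing heuristic you sketch (long disjoint chains on one side of $\ell_s$ combine with chains on the other side) is the right intuition, but turning it into a global budget argument across slabs is genuinely nontrivial and is not what the paper does.

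The paper's actual route (Theorem~\ref{cor2}) avoids the slab decomposition entirely. It introduces the two partial orders $\prec$-refined by above/below: $\alpha<_1\beta$ if $\alpha$'s projection starts and ends before $\beta$'s and $\alpha$ lies below $\beta$; $\alpha<_2\beta$ similarly with $\beta$ below $\alpha$. Both have chains of length at most $k$, so $V(G)$ splits into $k^2$ classes that are antichains in both orders. The key extra mileage is that within such a class the $x$-projections are pairwise \emph{nested or overlapping}, hence pairwise intersecting; so not only is there a common vertical line $l$, but ordering by the intersection with $l$ makes the class a \emph{semi-comparability} graph (the obstruction $a<b<c<d$ with $ab,bc,cd$ edges and $ac,bd$ non-edges is ruled out precisely because $x(c)\subset x(b)$ forces $b$ strictly between $a$ and $c$ over all of $c$'s span). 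Lemma~\ref{semiposet} then gives $\binom{k+1}{2}$ per class, not $\frac{k+1}{2}\binom{k+2}{3}$, and $k^2\cdot\binom{k+1}{2}$ is exactly the claimed bound. In short: the gap in your proposal is the absence of the $k^2$-coloring via $<_1,<_2$ and the observation that the resulting nesting upgrades each class from ``double-magical'' to ``semi-comparability,'' which is what makes the arithmetic close.
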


Here the lower bound follows directly from Theorem~\ref{construction2}.

Our paper is organized as follows. In Section \ref{sect:grounded}, we prove Theorem~\ref{upperbound} and the upper bound in Theorem~\ref{cor}. The existence of the graphs satisfying Theorem~\ref{construction} is proved in Section~\ref{grounded:lower}, using probabilistic techniques. The proofs of Theorems~\ref{upperbound2} and \ref{construction2} are presented in Sections~\ref{sect:vertical} and \ref{vertical:lower}, respectively. The last section contains open problems and concluding remarks.

\section{A bounding function for grounded curves\\--Proofs of Theorems~\ref{upperbound} and~\ref{cor}}\label{sect:grounded}

First, we establish Theorem \ref{upperbound}. As usual, we denote the set $\{1,2,\dots,n\}$ by $[n]$.

An {\em ordered graph} $G_{<}$ is a graph, whose vertex set is endowed with a total ordering $<$. Ordered graphs are often more suitable for modelling geometric configurations than unordered ones; see, e.g., \cite{F,PT}. To model families of grounded $x$-monotone curves, we introduce a class of ordered graphs.

\begin{definition}
An {\em ordered graph} $G_{<}$ is called a \emph{semi-comparability graph}, if it has no 4 vertices $a,b,c,d\in V(G_{<})$ such that $a<b<c<d$ and $ab,bc,cd\in E(G_{<})$, but $ac,bd\not\in E(G_{<})$.

An {\em unordered graph} $G$ is said to be a {\em semi-comparability graph}, if its vertex set has a total ordering $<$ such that $G_{<}$ is a semi-comparability graph.
\end{definition}

Obviously, every \emph{comparability graph} (that is, every graph whose edge set consists of all comparable pairs of a partially ordered set) is a semi-comparability graph.

\begin{lemma}\label{semiposet0}
The disjointness graph of every family $\mathcal{C}$ of grounded $x$-monotone curves is a semi-comparability graph.
\end{lemma}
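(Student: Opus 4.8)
The plan is to produce, from the family $\mathcal{C}$ of grounded $x$-monotone curves, a total order $<$ on the curves (i.e.\ on $V(G)$, where $G$ is the disjointness graph) for which the forbidden configuration of the definition cannot occur. The natural candidate for the order is given by the heights of the left endpoints: since every curve is grounded, each $C\in\mathcal{C}$ meets the $y$-axis in a single point, its left endpoint, and we order the curves by the $y$-coordinate of that point, breaking ties arbitrarily (a tie only forces us to pick \emph{some} order between two curves sharing a left endpoint; such curves intersect, so they are non-adjacent in $G$, and the tie-break will be harmless). So first I would fix this order and set up notation: write $\ell(C)$ for the left endpoint of $C$ and order $C_1 < C_2 < \dots$ by increasing height of $\ell(C_i)$.

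Next I would record the one geometric fact that makes everything work: two grounded $x$-monotone curves $A$ and $B$ with $\ell(A)$ below $\ell(B)$ are \emph{disjoint} if and only if $A$ stays below $B$ on the entire common $x$-range — more precisely, because both are $x$-monotone and grounded on the $y$-axis, if $A$ and $B$ are disjoint then over every vertical line that meets both of them, the point of $A$ lies below the point of $B$ (they start with $A$ lower and, being disjoint continuous $x$-monotone arcs, cannot swap order). This "non-crossing curves are linearly stacked" observation should be stated as a small sublemma and justified by the intermediate value theorem applied to the difference of the two (single-valued, on their common domain) functions defining the curves.

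Now suppose for contradiction that $G_{<}$ is not a semi-comparability graph, so there are curves $a<b<c<d$ with $a$–$b$, $b$–$c$, $c$–$d$ disjoint (edges of $G$) but $a$–$c$ and $b$–$d$ intersecting (non-edges). Using the sublemma on the disjoint pairs: on their common range $a$ lies below $b$, $b$ lies below $c$, and $c$ lies below $d$. I want to derive that $a$ and $c$ must then be disjoint, contradicting $ac\notin E(G)$ — or, symmetrically, that $b$ and $d$ are disjoint. The key step is a range/transitivity argument: consider a vertical line $x=t$ where $a$ and $c$ both live; I must show $a$'s point is strictly below $c$'s point there. If $b$ also reaches $x=t$, then $a(t) < b(t) < c(t)$ and we are done. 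The only danger is that the common $x$-range of $a$ and $c$ is not covered by $b$'s range; this is precisely where grounding is used again — all three curves start on the $y$-axis at $x=0$, so $b$'s $x$-range is an interval $[0,\beta]$ containing $0$, and likewise for $a$ and $c$; hence the common range of $a$ and $c$ is $[0,\min]$ and $b$ covers an initial segment of it, and for the part of $a\cap c$'s range beyond $b$'s right end we still need an argument. Handling that tail is, I expect, the one genuinely delicate point: there one falls back on $x$-monotonicity and the fact that a curve is a connected arc — near the right end of $b$, $a$ is already strictly below $c$, and since neither $a$ nor $c$ can jump past the other (they're disjoint from $b$ only, not from each other a priori, so this needs care) one argues by continuity that the sign of $c-a$ cannot change without $a$ and $c$ meeting, which would make $ac$ an edge. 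If the tail argument genuinely fails for $a$ vs.\ $c$, the configuration is symmetric enough that the mirror-image argument forces $b$ and $d$ disjoint instead; in either case we contradict one of $ac,bd\notin E(G)$. Assembling these pieces yields that no such $a,b,c,d$ exist, so $G_{<}$ is a semi-comparability graph, and therefore $G$ is a semi-comparability graph, proving Lemma~\ref{semiposet0}.
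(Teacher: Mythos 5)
Your setup — ordering by the height of the left endpoint and observing that two disjoint grounded $x$-monotone curves stack vertically on their common $x$-range — is the same as the paper's. But the heart of your argument has a genuine gap, and moreover contains a sign error that hides it. You try to prove that $a$ lies below $c$ on their entire common $x$-range, which would make $ac$ an edge. This cannot succeed: the hypothesis $ac\notin E(G)$ \emph{means} $a$ and $c$ intersect, so the statement you want to prove is false under your assumptions. In the tail discussion you write that the sign of $c-a$ "cannot change without $a$ and $c$ meeting, which would make $ac$ an edge" — but meeting is intersecting, and in the \emph{disjointness} graph an intersection makes $ac$ a \emph{non}-edge, exactly consistent with what was assumed. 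So there is no contradiction there; the tail is precisely where $a$ and $c$ cross, and the transitivity-through-$b$ argument simply runs out.

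The way out is through $d$, not through $a$ and $c$ directly, and this requires the one idea your sketch never articulates. Since $a$ and $c$ intersect, they bound with the $y$-axis a finite region $A$; since $b$ starts on the $y$-axis between $a$ and $c$ and crosses neither, $b$ is trapped in $A$, so in particular $b$'s $x$-range ends strictly before the first $a$-$c$ crossing and hence before $c$'s $x$-range does. Now $d$ starts above $c$ on the $y$-axis and never crosses $c$, so $d$ stays above $c$ throughout; but $b$ stays below $c$ throughout its range, and any putative $b$-$d$ intersection occurs at an $x$ that $c$ also reaches (this is where the trapping is used), giving $d(x)>c(x)>b(x)=d(x)$ — a contradiction. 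Your "mirror-image argument forces $b$ and $d$ disjoint instead" gestures at this but the essential step — that $c$'s range covers the potential $b$-$d$ intersection, which is what the bounded region $A$ buys you — is not established. This is precisely the argument in the paper ("$a$, $c$, and the ground line enclose a region $A$; $b$ lies in $A$; to intersect $b$, $d$ would have to cross $c$"), which you should state explicitly rather than derive everything from vertical stacking alone.
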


\begin{proof}
Let $G$ be the disjointness graph of $\mathcal{C}$. Identify the vertices of $G$ with the elements of $\mathcal{C}$. For any $\gamma\in\mathcal{C}$, let $(0,y_{\gamma})$ be the left endpoint of $\gamma$. Slightly perturbing the curves if necessary, we can assume without loss of generality that no two $y_{\gamma}$s coincide. Let $<$ be the total ordering on $V(G)$, according to which $\gamma<\gamma'$ if and only if $y_{\gamma}<y_{\gamma'}$.

Suppose for contradiction that there exist 4 curves $a,b,c,d$ such that $a<b<c<d$ and $ab,bc,cd\in E(G)$, but $ac,bd\not\in E(G)$. Then $a$ and $c$ must intersect, which means that $a$, $c$, and the ground line $x=0$ enclose a region $A$. Since $b$ does not intersect either of $a$ or $c$, it must lie in $A$. In order to intersect $b$, $d$ has to cross $c$, which is a contradiction.
\end{proof}

By Dilworth's theorem \cite{D}, comparability graphs are perfect. Thus, any comparability graph $G$ can be properly colored with $\omega(G)$ colors. While not all semi-comparability graphs are perfect, they are $\chi$-bounded.

\begin{lemma}\label{semiposet}
	For any semi-comparability graph $G$ with $\omega(G)=k$, we have $\chi(G)\leq \binom{k+1}{2}$.
\end{lemma}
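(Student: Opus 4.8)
The plan is to prove Lemma~\ref{semiposet} by induction on $k$, exploiting the forbidden-pattern definition of semi-comparability graphs to peel off one color class at a time, reducing the clique number by one each step. Fix a total order $<$ on $V(G)$ witnessing that $G_{<}$ is a semi-comparability graph. The key structural device will be a cleverly chosen independent set $I$ whose removal drops $\omega$; if I can always find such an $I$, then $\chi(G) \le k + \chi(G - I) \le k + \binom{k}{2} = \binom{k+1}{2}$ by induction, and the base case $k=1$ (no edges) is trivial. So the whole lemma reduces to the following claim: every semi-comparability graph $G$ with $\omega(G) = k \ge 1$ contains an independent set $I$ with $\omega(G - I) \le k - 1$, i.e. an independent set meeting every $k$-clique.

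To build $I$, I would look at the "left ends" of cliques relative to $<$. For each vertex $v$, let $\kappa(v)$ denote the largest size of a clique whose $<$-minimum element is $v$ (so $\kappa(v) = 0$ if $v$ is in no clique it minimizes, though every vertex minimizes the singleton, so really $\kappa(v) \ge 1$). Let $I = \{ v : \kappa(v) = k \}$ be the set of vertices that are the $<$-minimum of some maximum clique. This $I$ clearly meets every $k$-clique (the minimum of such a clique lies in $I$), so it remains to show $I$ is independent. Suppose $a < b$ with $a, b \in I$ and $ab \in E(G)$. Since $b \in I$, there is a $k$-clique $Q$ with $\min Q = b$; write $Q = \{b = q_1 < q_2 < \dots < q_k\}$. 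Now consider $a$ together with $Q$: $a < b = q_1$, and I would like to argue that $a$ is adjacent to enough of $Q$ to produce a $(k+1)$-clique or contradict the forbidden pattern. The forbidden configuration is $a < b < c < d$ with $ab, bc, cd$ edges but $ac, bd$ non-edges; so if $ab \in E$ and $bc \in E$ (some edge of $Q$ from $b$), then for the pattern to be avoided we need $ac \in E$ for every such $c$ — unless some further edge fails. The hard part will be making this propagation argument airtight: I expect to need to order things carefully and possibly choose $a$ to be the $<$-\emph{maximum} element of $I$ adjacent to $b$, or choose $Q$ extremally, so that the non-edges forced by the pattern cannot themselves be "repaired" by another application.

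More precisely, here is the intended mechanism for the independence argument. Take $a \in I$ with $a < b$, $ab \in E$, $b \in I$, and among all such situations choose one where additionally... I would instead run it from $Q$: let $Q = \{q_1 < \dots < q_k\}$ be a $k$-clique with $q_1 = b$. If $a q_i \in E(G)$ for all $i \in [k]$, then $\{a\} \cup Q$ is a $(k+1)$-clique, contradicting $\omega(G) = k$. So let $j$ be minimal with $a q_j \notin E(G)$; since $a q_1 = ab \in E$, we have $j \ge 2$. Then $a < q_{j-1} < q_j$ with $a q_{j-1} \in E$, $q_{j-1} q_j \in E$ (edge of $Q$), and $a q_j \notin E$. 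To get a forbidden pattern $x_1 < x_2 < x_3 < x_4$ I need a fourth vertex to the right of $q_j$, adjacent appropriately; since $a \in I$, there is a $k$-clique $Q'$ with $\min Q' = a$, and I would use an element of $Q'$ lying beyond $q_{j-1}$ — comparing the two cliques $Q'$ (rooted at $a$) and the tail of $Q$, using $\omega = k$ to force an incidence and the semi-comparability condition to derive a contradiction. The bookkeeping of which vertex plays which role in the pattern, and ensuring the two forbidden non-edges genuinely hold, is exactly where the proof needs care; I expect this case analysis — rather than the induction skeleton — to be the only real obstacle, and it is plausible the authors instead phrase it via a direct coloring $v \mapsto (\kappa(v), \text{something})$ with values in $\binom{[k+1]}{2}$, which would sidestep the extremal-choice subtleties.
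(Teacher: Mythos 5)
Your inductive skeleton --- peel off the set $I$ of vertices that are $<$-minima of some $k$-clique, then recurse on $G-I$, which has clique number at most $k-1$ --- is essentially the right decomposition and is the same one the paper uses (the paper's $V_k$ is your $I$). But the set $I$ is \emph{not} independent in general, and no extremal refinement can make it so; this is the decisive gap. A concrete counterexample with $k=2$: take the $5$-cycle with edges $12,23,34,45,51$ and order the vertices $1<3<5<2<4$. One checks that this ordered graph contains no forbidden quadruple, so it is a semi-comparability graph with $\omega=2$, yet $\kappa(1)=\kappa(3)=\kappa(5)=2$ while $15$ is an edge, so $I=\{1,3,5\}$ is not independent. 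There is also an internal tension in your write-up that points at the same issue: if $I$ really were independent, the inductive step would be $\chi(G)\le 1+\chi(G-I)$, not $k+\chi(G-I)$, and that recursion would give $\chi(G)\le k$ for every semi-comparability graph --- but Theorem~\ref{construction} exhibits semi-comparability graphs with $\omega=k$ and $\chi=\binom{k+1}{2}$, so such an $I$ cannot always exist.

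What the paper proves instead, and what makes your skeleton $\chi(G)\le k+\chi(G-V_k)$ work, is that $G[V_i]$ is a \emph{comparability graph} for each $i$. Indeed, if $a<b<c$ all lie in $V_i$ with $ab,bc\in E$ but $ac\notin E$, take an $i$-clique $C$ with $\min C=c$; for every $d\in C\setminus\{c\}$, either $bd\in E$ or $a<b<c<d$ is the forbidden quadruple. Hence $b$ is complete to $C$, so $C\cup\{b\}$ is an $(i+1)$-clique with minimum $b$, contradicting $\kappa(b)=i$. Since $G[V_i]$ is a comparability graph with no $(i+1)$-clique, Dilworth gives $\chi(G[V_i])\le i$, and summing $\sum_{i=1}^k i=\binom{k+1}{2}$ finishes the lemma. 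Your closing guess that the argument might be a direct coloring $v\mapsto(\kappa(v),\cdot)$ is on the mark --- the second coordinate is the Dilworth height of $v$ inside $G[V_{\kappa(v)}]$ --- but that coordinate is only well-defined once you know $G[V_i]$ is a comparability graph, which is the step your proposal is missing.
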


\begin{proof}
Fix an ordering $<$ of $V(G)$ such that $G_{<}$ is a semi-comparability graph. For every $v\in V(G)$, let $f(v)$ denote the size of the largest clique with minimal element $v$. Then $f(v)\in [k]$. For $i=1,\dots,k$, let $V_{i}=\{v\in G:f(v)=i\}$.
	
	The main observation is that $G[V_{i}]$ is a partial order. Indeed, suppose to the contrary that there exist 3 vertices $a,b,c\in V_{i}$ such that $a<b<c$ and $ab,bc\in E(G)$, but $ac\not\in E(G)$. Let $C\subset V(G)$ be a clique of size $i$ with minimal element $c$. If $d\in C\setminus \{c\}$, then $b$ and $d$ must be joined by an edge, otherwise the quadruple $a,b,c,d$ satisfies the conditions $ab,bc,cd\in E(G)$ and $ac,bd\not\in E(G)$. Thus, $b$ is joined to every vertex in $C$ by an edge, which means that $C\cup\{b\}$ is a clique of size $i+1$ with minimal element $b$, contradicting our assumption that $b\in V_{i}$.
	
	Hence, every $G[V_{i}]$ is a partial order. Using the fact that $G[V_{i}]$ does not contain a clique of size $i+1$, by Dilworth's theorem \cite{D} we obtain that $\chi(G[V_{i}])\leq i$. Summing up for all $i$, we get that $\chi(G)\leq \sum_{i=1}^{k}\chi(G[V_{i}])\leq \binom{k+1}{2},$ as required.
\end{proof}

The combination of Lemmas \ref{semiposet0} and \ref{semiposet} immediately implies Theorem \ref{upperbound}.

Next, we prove the upper bound in Theorem~\ref{cor}.

\begin{theorem}\label{cor2}
	Let $G$ be the disjointness graph of a collection of $x$-monotone curves with $\omega(G)=k$. Then we have $\chi(G)\leq k^{3}(k+1)/2$.
\end{theorem}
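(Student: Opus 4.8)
The plan is to exhibit $G$, the disjointness graph of the given family $\mathcal{C}$ of $x$-monotone curves, as the union of three graphs on the common vertex set $\mathcal{C}$: two comparability graphs and one semi-comparability graph, each with clique number at most $k$. Granting this, Dilworth's theorem properly colours each comparability graph with $k$ colours, Lemma~\ref{semiposet} properly colours the semi-comparability graph with $\binom{k+1}{2}$ colours, and assigning to every vertex the triple of its three colours yields a proper colouring of $G$ with at most $k\cdot k\cdot\binom{k+1}{2}=k^{3}(k+1)/2$ colours. So everything reduces to finding the right three-part split of the edge set of $G$.

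First I would perturb $\mathcal{C}$ into generic position so that all $x$-coordinates of endpoints are distinct, and write $I_\gamma=[a_\gamma,b_\gamma]$ for the $x$-projection of $\gamma$. Since the curves are $x$-monotone, any two disjoint curves that share an $x$-coordinate are vertically separated throughout their common $x$-interval, so ``which one is on top there'' is well defined. I split the disjoint (i.e.\ adjacent) pairs of $G$ according to whether their $x$-projections are nested or not. For the nested pairs, put $\gamma\prec_{\downarrow}\gamma'$ when $I_\gamma\subsetneq I_{\gamma'}$ and $\gamma$ lies strictly below $\gamma'$ over $I_\gamma$, and define $\prec_{\uparrow}$ with ``above'' in place of ``below''. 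Transitivity of $\prec_{\downarrow}$ and $\prec_{\uparrow}$ is immediate because in a chain of nested intervals the smallest is contained in all larger ones, and both relations are clearly contained in the disjointness relation. Hence their comparability graphs are subgraphs of $G$ (so have clique number $\le\omega(G)=k$), and together they account for exactly the nested disjoint pairs. These supply the two comparability graphs.

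What remains is the graph $G_{3}$ whose edges are the remaining (``staircasing'') disjoint pairs: the disjoint pairs $\{\gamma,\gamma'\}$ for which neither of $I_\gamma,I_{\gamma'}$ contains the other, a class which also swallows every pair with disjoint $x$-projections. Since $G_{3}\subseteq G$ we have $\omega(G_{3})\le k$, so by Lemma~\ref{semiposet} the proof is finished as soon as we produce an ordering of $\mathcal{C}$ witnessing that $G_{3}$ is a semi-comparability graph. This is the heart of the matter and where I expect the real obstacle: the obvious orders — by left-endpoint $x$-coordinate, by left-endpoint $y$-coordinate, by the length of $I_\gamma$ — each admit a small staircasing configuration, essentially a carefully drawn $4$-cycle with both diagonals absent, that realises the forbidden pattern. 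So the order must be engineered so that the ``enclosed region'' mechanism of Lemma~\ref{semiposet0} still applies: if $a<b<c<d$ realised the forbidden pattern in $G_3$, one wants the non-edge $\{a,c\}$ to force $\gamma_a$ and $\gamma_c$ to cross (being a non-edge of $G_3$ that is nonetheless geometrically compelled to overlap), $\gamma_b$ — disjoint from both and vertically pinched between them over their common $x$-interval — to be trapped inside the region bounded by arcs of $\gamma_a$ and $\gamma_c$, and $\gamma_d$ — which, by the choice of order, starts on the far side of $\gamma_c$ and is disjoint from $\gamma_c$ — to be unable ever to reach $\gamma_b$, contradicting $bd\notin E(G_3)$. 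Pinning down the total order for which $\gamma_c$ plays exactly the separating role that the grounding line plays in Lemma~\ref{semiposet0} is the delicate step; once it is in place, combining the three colourings gives $\chi(G)\le k^{3}(k+1)/2$, which together with Theorem~\ref{construction2} establishes Theorem~\ref{cor}.
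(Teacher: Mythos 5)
Your three-way edge decomposition and product-colouring scheme are sound in outline, and the two comparability parts $\prec_{\downarrow},\prec_{\uparrow}$ on nested disjoint pairs work exactly as you say: they are transitive, each is contained in $E(G)$, so chains are cliques of $G$ and Dilworth gives $k$ colours each. But the step you flag as ``the heart of the matter'' is a genuine gap, not a missing technicality, and I do not see how to close it in this form. The residual graph $G_{3}$ consists of the non-nested (staircasing) disjoint pairs. If you order by the left endpoint $\min x(\gamma)$, a forbidden $4$-tuple is easy to realise: take $a,b,c,d$ with $\min x(a)<\dots<\min x(d)$ and $\max x(a)<\dots<\max x(d)$, let $a$ and $c$ cross once, let $b$ lie entirely above both $a$ and $c$ over their common span, and let $d$ enter above $c$, stay above $c$, but cross $b$. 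Then $ab,bc,cd\in E(G_{3})$ while $ac,bd\notin E(G_{3})$ because those pairs intersect. Other natural orders fail similarly, and it is not clear what total order could play the role of the grounding line for this ``outer'' class, precisely because a staircasing family of curves need not have any common vertical transversal.

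The paper makes the \emph{dual} split, which is what rescues the argument. Define $\alpha\prec\beta$ when both endpoints of $x(\alpha)$ are to the left of the corresponding endpoints of $x(\beta)$, i.e.\ exactly the staircasing pairs, and let $<_{1}$ (resp.\ $<_{2}$) be ``$\alpha\prec\beta$ and $\alpha$ below (resp.\ above) $\beta$'', restricted to disjoint pairs. These are partial orders of height at most $k$, so one can $k^{2}$-colour the \emph{vertices} so that each class is an antichain in both. Within such a class, any two disjoint curves have nested $x$-projections, so all projections pairwise intersect, and by the one-dimensional Helly theorem the whole class shares a vertical line $l$. Ordering the class by its intersections with $l$ turns the induced disjointness graph into a semi-comparability graph (the argument is essentially Lemma~\ref{semiposet0} with $l$ as the ground line), so Lemma~\ref{semiposet} gives $\binom{k+1}{2}$ colours per class and $k^{2}\binom{k+1}{2}=k^{3}(k+1)/2$ overall. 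In short: you put the nested pairs into the Dilworth part and the staircasing pairs into the residual, but the staircasing residual has no common transversal to ground it; the paper puts the staircasing pairs into the Dilworth part and the nested pairs into the residual, which \emph{does} have a common transversal, and that is exactly what makes the semi-comparability step go through.
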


\begin{proof}
	Let $\mathcal C$ be a collection of $x$-monotone curves satisfying the conditions in the theorem. For any $\gamma\in\mathcal{C}$, let $x(\gamma)$ denote the projection of $\gamma$ to the $x$-axis. For $\alpha,\beta\in \mathcal{C}$, let $\alpha\prec\beta$ if $\min x(\alpha)<\min x(\beta)$ and $\max x(\alpha)<\max x(\beta)$.

Suppose that $\alpha$ and $\beta$ are disjoint. Let $\alpha<_{1}\beta$ if $\alpha\prec \beta$ and $\alpha$ is {\em below} $\beta$, that is, if on every vertical line that intersects both $\alpha$ and $\beta$, the intersection point of $\alpha$ lies below the intersection point of $\beta$. Let $\alpha<_{2}\beta$ if $\alpha\prec \beta$ and $\beta$ is below $\alpha$. Clearly, $<_{1}$ are $<_{2}$ are partial orders.
	
	As $\omega(G)\leq k$, the size of the longest chains with respect to $<_{1}$ and $<_{2}$ is at most $k$. Therefore, the vertices of $G$ can be colored with $k^{2}$ colors such that each color class is an antichain in both $<_{1}$ and $<_{2}$.
	
	It remains to show that each of these color classes can be properly colored with $\binom{k+1}{2}$ colors. Let $\mathcal{C}'\subset\mathcal{C}$ such that no two elements of $\mathcal{C}'$ are comparable by $<_{1}$ or $<_{2}$. Then, if $\alpha,\beta\in \mathcal{C}'$, then either $\alpha$ and $\beta$ intersect, or one of the intervals $x(\alpha)$ or $x(\beta)$ contains the other. In either case, $x(\alpha)$ and $x(\beta)$ have a nonempty intersection, so any two elements of $\{x(\gamma):\gamma\in\mathcal{C}'\}$ intersect. Hence, $\bigcap_{\gamma\in\mathcal{C}'}x(\gamma)$ is nonempty, and there exists a vertical line $l$ that intersects every element of $\mathcal{C}'$.
	
	Let $G'$ denote the disjointness graph of $\mathcal{C}'$. Order the elements of $\mathcal{C}'$ with respect to their intersections with $l$, from bottom to top. We claim that the resulting ordered graph $G'_{<}$ is a {\em semi-comparability graph}. Indeed, suppose to the contrary that there are four vertices $a,b,c,d\in V(G')$ such that $a<b<c<d$ and  $ab,bc,cd\in E(G')$, but $ac,bd\not\in E(G')$. Without loss of generality, suppose that the length of $x(b)$ is larger than the length of $x(c)$; the other case can be handled similarly. As $bc\in E(G')$, we have $x(c)\subset x(b)$ and $b$ is below $c$, so every vertical line intersecting $c$ intersects $b$ as well, and its intersection with $b$ lies below its intersection with $c$. Also, as $ab\in E(G')$, we have that $a$ is below $b$. But then $a$ and $c$ must be disjoint, contradicting the condition $ac\not\in E(G')$.
	
	Thus, we can apply Lemma \ref{semiposet} to conclude that $G'$ can be properly colored with $\binom{k+1}{2}$ colors. This completes the proof.
\end{proof}

Let $g(n)$ denote the maximal number $m$ such that every collection of $n$ convex sets in the plane contains $m$ elements that are either pairwise disjoint, or pairwise intersecting. Larman et al.~\cite{LMPT} proved that $g(n)\geq n^{1/5}$, while the best known upper bound, due to Kyn\v{c}l \cite{Ky} is $g(n)<n^{\log 8/\log 169}\approx n^{0.405}$. Theorem~\ref{cor2} implies the following modest improvement on the lower bound.

\begin{corollary}\label{corcor}
	Every collection of $n$ $x$-monotone curves (or convex sets) in the plane contains $((2+o(1))n)^{1/5}\approx 1.15n^{1/5}$ elements that are either pairwise disjoint or pairwise intersecting.
\end{corollary}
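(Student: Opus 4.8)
The plan is to feed the disjointness graph $G$ of the given family into Theorem~\ref{cor2}. A clique of $G$ is a set of pairwise disjoint curves and an independent set of $G$ is a set of pairwise intersecting curves, so it suffices to show $\max\{\omega(G),\alpha(G)\}\ge m$ for $m=((2+o(1))n)^{1/5}$. Write $k=\omega(G)$. If $k\ge m$ we are done, so assume $k<m$, i.e.\ $k\le m-1$. Then Theorem~\ref{cor2} gives $\chi(G)\le k^{3}(k+1)/2$, and since each of the $\chi(G)$ color classes of an optimal proper coloring is an independent set covering $V(G)$,
\[
\alpha(G)\ \ge\ \frac{n}{\chi(G)}\ \ge\ \frac{2n}{k^{3}(k+1)}\ \ge\ \frac{2n}{(m-1)^{3}m}.
\]
Now I would take $m$ to be the largest integer with $m^{2}(m-1)^{3}\le 2n$; then the right-hand side above is at least $m$, so $\alpha(G)\ge m$. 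Since $m^{2}(m-1)^{3}=m^{5}(1+o(1))$, this choice satisfies $m=(2n)^{1/5}(1+o(1))=((2+o(1))n)^{1/5}\approx 1.15\,n^{1/5}$, which settles the case of $x$-monotone curves.

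For families of convex sets I would check that the proof of Theorem~\ref{cor2} carries over. Using the same projection $x(\gamma)$ to the $x$-axis and the same relation $\alpha\prec\beta$, the only place $x$-monotonicity was used is to decide, for two disjoint sets with $\alpha\prec\beta$, which one lies \emph{below} the other; for convex sets this is still well defined, since over the interval where $x(\alpha)$ and $x(\beta)$ overlap the two vertical slices are disjoint closed intervals at every $x$, and by convexity their relative order cannot switch (a switch would force the slices to share a point). With this, $<_{1}$ and $<_{2}$ are again partial orders of height at most $k$, so $k^{2}$ colors make every class an antichain in both; within such a class any two members have overlapping $x$-projections (otherwise they would be disjoint and $\prec$-comparable), so by Helly's theorem on the line there is a common transversal vertical line $l$; and ordering the class along $l$, the semi-comparability argument of Theorem~\ref{cor2} applies essentially verbatim (using that for disjoint $\prec$-incomparable convex sets one projection contains the other). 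Alternatively, one may simply invoke the analogous $\chi$-bound for disjointness graphs of convex sets from~\cite{PaT}.

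The computation is routine, and the only point that needs care is to keep the precise estimate $\chi(G)\le k^{3}(k+1)/2\sim k^{4}/2$ from Theorem~\ref{cor2} rather than the weaker $\chi(G)\le k^{4}$: it is exactly the factor $2$ saved here that turns $n^{1/5}$ into $(2n)^{1/5}$ and hence produces the stated constant $2^{1/5}\approx 1.15$. The only genuine (and minor) obstacle is the verification in the second paragraph that convexity can replace $x$-monotonicity at every step of the proof of Theorem~\ref{cor2}.
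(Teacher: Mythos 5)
Your argument is the same as the paper's: combine the bound $\chi(G)\le k^{3}(k+1)/2$ from Theorem~\ref{cor2} with the trivial inequality $\alpha(G)\chi(G)\ge n$, then optimize to get $\max\{\alpha(G),\omega(G)\}\ge((2+o(1))n)^{1/5}$. The paper states this in one line without the explicit case split or the discussion of convex sets; your extra paragraph checking that the proof of Theorem~\ref{cor2} adapts to convex sets (or citing the analogous bound from~\cite{PaT}) fills in a step the paper leaves implicit, but the underlying method is identical.
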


\begin{proof}
	In every graph $G$ on $n$ vertices, we have $\alpha(G)\chi(G)\geq n$. In view of Theorem~\ref{cor2}, this implies that if $\mathcal{C}$ is a collection of $n$ $x$-monotone curves and $G$ is the disjointness graph of $\mathcal{C}$, then we have $$\alpha(G)(\omega(G))^{3}\frac{\omega(G)+1}{2}\geq n.$$
Therefore, $\max\{\alpha(G),\omega(G)\}\geq ((2+o(1))n)^{1/5}$, as claimed.
\end{proof}

\section{Magical graphs--Proof of Theorem~\ref{construction}}\label{grounded:lower}
The converse of Lemma \ref{semiposet0} is not true: not every semi-comparability graph can be realized as the disjointness graph of a collection of grounded $x$-monotone curves. See Section \ref{sect:remarks}, for further discussion. To characterize such disjointness graphs, we need to introduce a new family of graphs.

A graph $G_{<_{1},<_{2}}$  with two total orderings, $<_{1}$ and $<_{2}$, on its vertex set is called \emph{double-ordered}. If the orderings $<_{1},<_{2}$ are clear from the context, we shall write $G$ instead of $G_{<_{1},<_{2}}$.

\begin{definition}
A double-ordered graph $G_{<_{1},<_{2}}$ is called \emph{magical} if for any three distinct vertices $a,b,c\in V(G)$ with $a<_{1}b<_{1}c$, if $ab,bc\in E(G)$ and $ac\not\in E(G)$, then $b<_{2}a$ and $b<_{2}c$.

A graph $G$ is said to be {\em magical}, if there exist two total orders $<_{1},<_{2}$ on $V(G)$ such that $G_{<_{1},<_{2}}$ is magical. In this case, we say that the pair $(<_{1},<_{2})$ \emph{witnesses} $G$.
\end{definition}

It easily follows from the above definition that if $G_{<_{1},<_{2}}$ is magical, then $G_{<_{1}}$ is a semi-comparability graph.

\begin{lemma}\label{magical1}
	If $\mathcal{C}$ is a collection of grounded $x$-monotone curves, then the disjointness graph of $\mathcal{C}$ is magical.
\end{lemma}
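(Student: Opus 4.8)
The plan is to mimic the construction from the proof of Lemma~\ref{semiposet0} and produce the two orderings $<_1,<_2$ explicitly from the geometry. For $<_1$ I would reuse the order given there: for each curve $\gamma\in\mathcal C$ with left endpoint $(0,y_\gamma)$, after a generic perturbation so that the $y_\gamma$ are distinct, set $\gamma<_1\gamma'$ iff $y_\gamma<y_{\gamma'}$. For $<_2$ the natural candidate is to look at the \emph{right} endpoints: write $(x_\gamma,z_\gamma)$ for the right endpoint of $\gamma$ and, again after a generic perturbation, declare $\gamma<_2\gamma'$ iff $z_\gamma<z_{\gamma'}$ (say). The point of ordering by right endpoints is that a curve that is ``trapped'' in a bounded region $A$ must exit on the right-hand side through the boundary of $A$, so its right endpoint is pinned between the right endpoints of the two curves bounding $A$.

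The key step is then to verify the magical condition. Suppose $a<_1 b<_1 c$, that $ab,bc\in E(G)$ (so $a\cap b=\emptyset$ and $b\cap c=\emptyset$) and that $ac\notin E(G)$ (so $a\cap c\neq\emptyset$). Exactly as in Lemma~\ref{semiposet0}, the curves $a$, $c$ together with the ground line $\{x=0\}$ bound a closed region $A$, and since $b$ is disjoint from both $a$ and $c$ and its left endpoint $(0,y_b)$ lies strictly between $(0,y_a)$ and $(0,y_c)$ on the segment of the $y$-axis cut off by $A$, the whole curve $b$ is contained in $A$. I want to conclude $b<_2 a$ and $b<_2 c$, i.e.\ that the right endpoint of $b$ lies below those of $a$ and of $c$. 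Since $b\subset A$ and $b$ is $x$-monotone with $x$-coordinates in $[0,x_b]$, its right endpoint is a point of $A$. The right-hand part of $\partial A$ consists of an arc of $a$ and/or an arc of $c$; because $a$ and $c$ emanate from $y_a<y_b<y_c$ on the $y$-axis and the region $A$ sits between them, every point of $A$ lies (weakly) below the upper boundary and above the lower boundary, which near the right side are arcs of $c$ and of $a$ respectively. Hence the right endpoint of $b$ is below the corresponding point of $c$ and above the corresponding point of $a$; in particular it is below the topmost point of $c$ (so below $z_c$ after the perturbation) and, comparing at the rightmost $x$-coordinate reached by $b$, below $z_a$ as well. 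A small amount of care is needed to argue these endpoint inequalities survive the perturbations, which one handles by choosing the perturbations small enough relative to all the finitely many positive gaps that occur. This gives $z_b<z_a$ and $z_b<z_c$, i.e.\ $b<_2 a$ and $b<_2 c$, as required.

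The main obstacle I anticipate is making the ``$b\subset A$ forces its right endpoint below those of $a$ and $c$'' argument fully rigorous, since $A$ is only a topological disk bounded by $x$-monotone arcs and the ground line, and one has to rule out degenerate situations (e.g.\ $a$ and $c$ crossing several times, or $b$ reaching out to an $x$-coordinate beyond where $a$ or $c$ still bounds $A$). I expect this is handled cleanly by exploiting $x$-monotonicity: restrict attention to the vertical strip $0\le x\le x_b$, observe that within this strip the upper boundary of $A$ is a single $x$-monotone arc lying on $a\cup c$ and the lower boundary likewise, and that $b$, being $x$-monotone and inside $A$, stays weakly between them throughout the strip; evaluating at $x=x_b$ pins down the comparison at $b$'s right endpoint. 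Once the inside-the-region claim is set up carefully, the endpoint inequalities and hence the magical property follow, and (as already noted in the excerpt) the fact that $G_{<_1}$ is then automatically a semi-comparability graph is consistent with Lemma~\ref{semiposet0}.
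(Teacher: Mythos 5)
Your choice of $<_2$ is the wrong one, and this is a genuine gap rather than a technicality. You order by the $y$-coordinate $z_\gamma$ of the right endpoint, whereas the correct ordering (the one the paper uses) is by the \emph{$x$-coordinate} $x_\gamma$ of the right endpoint. The geometric fact you have available once you know $b\subset A$ is that $b$, being $x$-monotone and entirely contained in the bounded region $A$, cannot extend as far to the right as the rightmost point of $A$; since the right part of $\partial A$ is formed by arcs of $a$ and $c$, the $x$-coordinate of $b$'s right endpoint is strictly smaller than both $x_a$ and $x_c$. That is exactly $b<_2 a$ and $b<_2 c$ for the $x$-coordinate ordering. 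Nothing of the sort holds for the $y$-coordinate.

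To see the $y$-coordinate ordering fail concretely, take $a$ to be the horizontal segment from $(0,0)$ to $(10,0)$, take $c$ to run from $(0,10)$ linearly down through $(5,0)$ to $(10,-5)$, and take $b$ to be the segment from $(0,5)$ to $(4,1)$. Then $a<_1 b<_1 c$, $b$ is disjoint from both $a$ and $c$ while $a$ and $c$ cross, so the hypotheses of the magical condition hold; yet $z_b=1$, $z_a=0$, $z_c=-5$, so $b$ is $<_2$-\emph{largest} among the three, not $<_2$-smallest. The intermediate reasoning also does not go through: the right endpoint of $b$ lies \emph{above} the arc of $a$ at $x=x_b$, so there is no way to deduce $z_b<z_a$ from ``$b$ stays between the bounding arcs''; and ``below the topmost point of $c$'' says nothing about the relation between $z_b$ and $z_c$, since $z_c$ is the height of $c$'s right endpoint, not its maximum. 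The statement that the right endpoint of $b$ is ``pinned between the right endpoints of $a$ and $c$'' is the root of the confusion: $b$ does not exit $A$ at all (its only boundary point is its left endpoint on the ground line), so its right endpoint lies in the interior of $A$ and is pinned in $x$, not in $y$. Replacing $z_\gamma$ by $x_\gamma$ in the definition of $<_2$ repairs the proof and then the argument is short: $b\subset A$ forces $x_b<\min\{x_a,x_c\}$, done.
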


\begin{proof}
	Let $G$ be the disjointness graph of $\mathcal{C}$, and identify the vertices of $G$ with the elements of $\mathcal{C}$. For any $\gamma\in\mathcal{C}$, let $(0,y_{\gamma})$ be the endpoint of $\gamma$ lying on the vertical axis $\{x=0\}$, and let $(x_{\gamma},y'_{\gamma})$ be the other endpoint of $\gamma$.
	
	 Define the total orderings $<_{1}$ and $<_{2}$ on $V(G)$, as follows.  Let $\gamma<_{1}\gamma'$ if and only if $y_{\gamma}<y_{\gamma'}$, and let $\gamma<_{2}\gamma'$ if and only if $x_{\gamma}<x_{\gamma'}$.
	
	 Suppose that for a triple $a,b,c\in \mathcal{C}$ we have that $a<_{1}b<_{1}c$ and $ab,bc\in E(G)$, but $ac\not\in E(G)$. Then $a$ and $c$ intersect. Hence, $a,c,$ and the ground curve $\{x=0\}$ enclose a region $A$, and $b\subset A$. This implies that the $x$-coordinate of the right endpoint of $b$ is smaller than the $x$-coordinates of the right endpoints of $a$ and $c$. Therefore, we have $b<_{2}a$ and $b<_{2}c$, showing that $G$ is magical.
\end{proof}

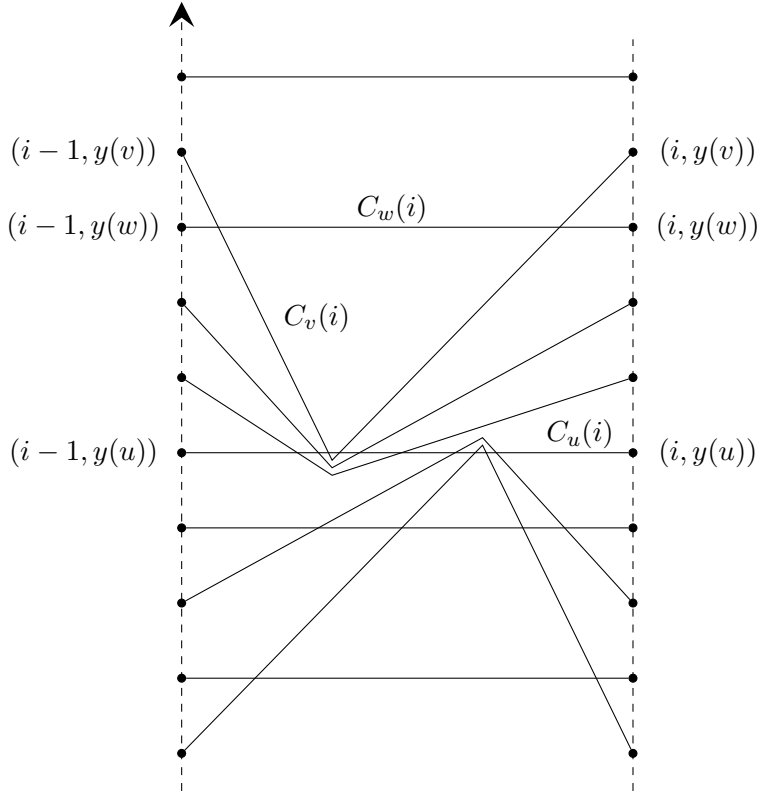
\begin{figure}[t]
	\begin{center}
		\begin{tikzpicture}[scale=1]
		
	%	\draw[->,dashed] (0,0) -- (0,10) ;
		\draw [dashed,decoration={markings,mark=at position 1 with
			{\arrow[scale=3,>=stealth]{>}}},postaction={decorate}] (0,0) -- (0,10.5);
		
		\draw[dashed] (6,0) -- (6,10) ;
		\foreach \i in {0,...,9}
		{
			\node[vertex] (A\i) at (0,\i+0.5) {};
			\node[vertex] (B\i) at (6,\i+0.5) {};
		}
		\node[] at (-1.3,8.5) {$(i-1,y(v))$} ;
		\node[] at (7,8.5) {$(i,y(v))$} ;
		\node[] at (-1.3,7.5) {$(i-1,y(w))$} ;
		\node[] at (7,7.5) {$(i,y(w))$} ;
		\node[] at (2.8,7.75) {$C_{w}(i)$} ;
		\node[] at (1.8,6.3) {$C_{v}(i)$} ;
        \node[] at (5.3,4.75) {$C_{u}(i)$} ;
		
		\node[] at (-1.3,4.5) {$(i-1,y(u))$} ;
		\node[] at (7,4.5) {$(i,y(u))$} ;
		
		\draw (A0) -- (4,4.6) -- (B0) ;
		\draw (A1) -- (B1) ; 	
		\draw (A2) -- (4,4.7) -- (B2) ;
		\draw (A3) -- (B3) ;
		\draw (A4) -- (B4) ;
		
		\draw (A5) -- (2,4.2) -- (B5) ;
		\draw (A6) -- (2,4.3) -- (B6) ;
		\draw (A7) -- (B7) ;
		\draw (A8) -- (2,4.4) -- (B8) ;
		\draw (A9) -- (B9) ;
		
		%		\node[] at (-1,6.75) {$V_{1,1}$};

		\end{tikzpicture}
		\caption{An illustration of the curves $C_{v}(i)$ in the proof of Lemma \ref{magical2}.}
		\label{figure1}
	\end{center}
\end{figure}

\begin{lemma}\label{magical2}
	Let $G$ be a magical graph. Then there exists a family $\mathcal{C}$ of grounded $x$-monotone curves such that the disjointness graph of $\mathcal{C}$ is isomorphic to $G$.
\end{lemma}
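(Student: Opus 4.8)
\section*{Proof proposal}

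The plan is to realize $G$ by an explicit left--to--right construction, directly mirroring Figure~\ref{figure1}. Fix orders $<_1,<_2$ witnessing that $G$ is magical, and list the vertices as $u_1,\dots,u_n$ with $u_1<_2\cdots<_2 u_n$. For each $k$ I will take $\gamma_{u_k}$ to be the graph of a continuous function on the interval $[0,k]$ (so the right endpoints sit at distinct $x$--coordinates, recording $<_2$), with left endpoint at height equal to the $<_1$--rank of $u_k$ (so the ``base heights'' at integer $x$ record $<_1$). I build the curves column by column, where \emph{column $i$} is the strip $i-1\le x\le i$; in column $i$ only the curves $\gamma_{u_i},\gamma_{u_{i+1}},\dots,\gamma_{u_n}$ are present (the others have already terminated). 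In column $i$ I keep $\gamma_{u_i}$ and every present \emph{neighbour} of $u_i$ horizontal at its base height. For every present \emph{non}-neighbour $d$ of $u_i$ (so $\gamma_d$ must eventually meet $\gamma_{u_i}$) I route $\gamma_d$ to make one shallow detour that crosses the horizontal line $y=y(u_i)$ and then returns to its base height $y(d)$ before $x=i$. The detours of the non-neighbours lying $<_1 u_i$ are nested inside one another within one $x$--subinterval of the column and those lying $>_1 u_i$ within a second $x$--subinterval; within each family the detours are taken pairwise non-crossing while still each reaching across $y=y(u_i)$ (possible since the curves enter and leave the subinterval in the same $<_1$--order and there is enough vertical room), and all detour amplitudes are chosen smaller than the least gap between consecutive base heights of the present curves. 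Because every detour returns to its base height, the curves are at their base heights at every column boundary, so the columns may be treated independently.

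It then remains to check that $\gamma_{u_a}\cap\gamma_{u_b}=\emptyset$ if and only if $u_au_b\in E(G)$. If $u_au_b\notin E(G)$ with $a<b$, then in column $a$ the curve $\gamma_{u_b}$ is present and detours across the horizontal line $y=y(u_a)=\gamma_{u_a}$, so the two curves meet; thus all non-edges are realized. For the converse I check, column by column, that an edge $u_au_b\in E(G)$ never forces a crossing. Within a column the only crossings come from detours, and two detours on the same side of the column owner stay nested, hence disjoint, while two on opposite sides live in disjoint $x$--subintervals with amplitude too small to reach each other. The only remaining case is a detouring non-neighbour $\gamma_d$ of the owner $u_c$ sweeping past the base height of a flat curve $\gamma_f$ with $u_cf\in E(G)$; this occurs only if $y(f)$ lies strictly between $y(u_c)$ and $y(d)$, say $u_c<_1 f<_1 d$. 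In that situation the triple $u_c<_1 f<_1 d$ satisfies $u_cd\notin E(G)$ and $u_cf\in E(G)$, so if also $fd\in E(G)$ the magical property forces $f<_2 u_c$, i.e.\ $\gamma_f$ has already terminated before column $c$ --- contradicting that $f$ is present in column $c$. Hence $fd\notin E(G)$, so the crossing is harmless.

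The main obstacle is exactly this last verification: guaranteeing that the detours inserted to create the \emph{required} intersections never produce an intersection between two curves that ought to be disjoint. The whole construction is arranged so that the only way this could fail is the configuration ruled out above, and what closes the gap is precisely the magical property of $(<_1,<_2)$ together with the elementary bookkeeping fact that the curves present in column $c$ are exactly $u_c,u_{c+1},\dots,u_n$, i.e.\ those $u$ with $u\ge_2 u_c$. Finally, each $\gamma_{u_k}$ is the graph of a continuous function on $[0,k]$, hence an $x$--monotone curve contained in $\{x\ge 0\}$ whose only point on $\{x=0\}$ is its left endpoint; the $\gamma_{u_k}$ are pairwise distinct, and by the above their disjointness graph is isomorphic to $G$. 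This proves the lemma.
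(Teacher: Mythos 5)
Your proposal is correct and follows essentially the same construction as the paper: build the curves column by column, assigning left-endpoint heights by the $<_1$-rank and right-endpoint $x$-coordinates by the $<_2$-rank, keep the column owner and its neighbours flat, let non-neighbours make a small detour across the owner's height, and use nesting plus separated $x$-subintervals and small overshoot to control spurious crossings among detours. Your final verification — that a detour of $d$ past a flat neighbour $f$ of the owner $u_c$ cannot produce a forbidden crossing because $u_c <_1 f <_1 d$, $u_cf\in E$, $u_cd\notin E$, $fd\in E$ would by magicality force $f<_2 u_c$, contradicting that $f$ is still present in column $c$ — is exactly the key step in the paper's proof.
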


\begin{proof}
	Let $n$ be the number of vertices of $G$. Let $<_{1}$ and $<_{2}$ be total orderings on $V(G)$ witnessing that $G$ is magical. For any vertex $v\in V(G)$, let $y(v)\in [n]$ denote the position of $v$ in the ordering $<_{1}$, and let $x(v)$ denote the position of $v$ in the ordering $<_{2}$.
	
	For any $v\in V(G)$, we define an $x$-monotone curve $C_{v}$, which will be composed of $x(v)$ smaller $x$-monotone pieces, $C_{v}(1),\dots, C_{v}(x(v)),$ such that $C_{v}(i)$ starts at the point $(i-1,y(v))$, and ends at the point $(i,y(v))$. The pieces $C_{v}(i)$ are defined, as follows.
	
	Let $u\in V(G)$ such that $x(u)=i$. If $u=v$ or there is an edge between $u$ and $v$, then let $C_{v}(i)$ be the horizontal line segment connecting $(i-1,y(v))$ and $(i,y(v))$. Otherwise, let $C_{v}(i)$ be a polygonal curve consisting of two segments, whose 3 vertices are
$$\left(i-1,y(v)\right),\;\;\left(i-\frac23,y(u)-\frac{1}{10}+\frac{y(v)}{10n}\right),\;\;\left(i,y(v)\right)\;\; \mbox{  if  }\;\; y(u)<y(v), $$
or
$$\left(i-1,y(v)\right),\;\;\;\;\left(i-\frac13,y(u)+\frac{y(v)}{10n}\right),\;\;\;\;\left(i,y(v)\right)\;\;\;\;\;\;\;\; \mbox{  if  }\;\; y(u)>y(v).$$
See Figure \ref{figure1} for an illustration. One can easily check the following property of the curves $\{C_{v}(i)\}_{v\in V(G)}$.  If $v,w\in V(G)$ are distinct vertices such that $C_{v}(i)$ and $C_{w}(i)$ intersect, then
	
	(i) $x(v),x(w)\geq i$.
	
	(ii) Exactly one of $v$ and $w$ is joined to $u$ in $G$. Without loss of generality, suppose that it is $w$.
	
	(iii) Then $y(u)\le y(w) < y(v)$ or $y(v)<y(w)\leq y(u)$.
	
\noindent Now we show that $G$ is the disjointness graph of $\mathcal{C}=\{C_{v}:v\in V(G)\}$.
	
	If $v$ and $w$ are not joined by an edge in $G$, then $C_{v}(\min\{x(v),x(w)\})$ and $C_{w}(\min\{x(v),x(w)\})$ intersect by definition, so $C_{v}$ and $C_{w}$ have a nonempty intersection.

Our task is reduced to showing that if $v$ and $w$ are joined by an edge, then $C_{v}$ and $C_{w}$ do not intersect. Suppose to the contrary that $C_{v}$ and $C_{w}$ intersect. Then there exists ${i\in [\min\{x(v),x(w)\}-1]}$ such that $C_{v}(i)$ and $C_{w}(i)$ intersect. Let $u$ be the vertex satisfying $x(u)=i$. Then either $y(u)\le y(v),y(w)$, or $y(u)\le y(v),y(w)$. Without loss of generality, let $y(u)\le y(v),y(w)$, the other case can be handled in a similar manner. Again, without loss of generality, we can suppose that  $y(w)<y(v)$. Then $C_{v}(i)$ intersects $C_{u}(i)$, and $C_{w}(i)$ is disjoint from $C_{u}(i)$, or equivalently, $uw\in E(G)$, but $uv\not\in E(G)$. However, this is impossible, because $wv\in E(G)$, so the triple $u,w,v$ would contradict the assumption that $G$ is magical.
\end{proof}

By Lemma~\ref{magical2}, in order to prove Theorem~\ref{construction}, it is enough to verify the corresponding statement for magical graphs. In other words, we have to prove the following.

\begin{theorem}\label{mainmagical}
	For every positive integer $k\geq 2$, there exists a magical graph $G$ such that $\omega(G)=k$ and $\chi(G)=\binom{k+1}{2}$.
\end{theorem}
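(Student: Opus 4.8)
The upper bound $\chi(G)\le\binom{k+1}{2}$ is automatic: a magical graph $G_{<_1,<_2}$ has $G_{<_1}$ a semi-comparability graph, so Lemma~\ref{semiposet} applies. Hence the entire task is to \emph{construct} a magical graph $G$ with $\omega(G)=k$ that admits no proper coloring with $\binom{k+1}{2}-1$ colors. I would do this probabilistically, but the model cannot be a plain random graph: since every magical graph with clique number $k$ already satisfies $\chi\le\binom{k+1}{2}$, the example we are after lies exactly on the extremal boundary $\chi=\binom{\omega+1}{2}$, where a generic sparse $K_{k+1}$-free graph is far from optimal. The randomness therefore has to be layered over a rigid combinatorial scaffold.

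The scaffold I would use: fix a large $n=n(k)$ and a vertex set carrying two prescribed total orders $<_1,<_2$, chosen so that \emph{every} edge set taken from a prescribed ``admissible'' family yields a magical graph. Concretely, partition $V$ into $k$ layers $V_1\prec\cdots\prec V_k$ (each $V_i$ itself a union of many disjoint copies of a fixed $i$-chromatic ``gadget''), let $<_1$ refine this layer order, allow edges only between distinct layers according to a fixed pattern, and choose $<_2$ so that in every admissible monotone triple $a<_1 b<_1 c$ with $ab,bc$ edges and $ac$ a non-edge the middle vertex $b$ is automatically the $<_2$-smallest of the three --- e.g.\ by arranging, for each potential such triple, that the middle layer sits below the two outer ones in $<_2$, after first checking that the admissible edge pattern makes these ``middle below outer'' demands mutually consistent. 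The graph $G$ is then obtained by keeping each admissible edge independently with a suitable constant probability $p=p(k)$, with one transversal $K_k$ (one vertex per layer, all admissible pairs present) planted outright so that $\omega(G)\ge k$.

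For the clique number, $\omega(G)\ge k$ holds by the planted transversal, while $\omega(G)\le k$ I would get from a first-moment estimate: a would-be $K_{k+1}$ must meet the layers in a way constrained by the admissible pattern, and the probability that all of its cross-layer pairs survive is $p^{\Theta(k^2)}$; summing over the at most $n^{k+1}$ candidates and choosing $p$ small in terms of $k$ alone makes the expected number of $(k+1)$-cliques $o(1)$ (one can also build the clique bound into the pattern deterministically, leaving randomness only for colors). The heart of the proof is the bound $\chi(G)\ge\binom{k+1}{2}$, which I would prove by a union bound over all colorings $\phi\colon V(G)\to\bigl[\binom{k+1}{2}-1\bigr]$. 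For a fixed $\phi$, the $k$ layers are colored with a total of at most $\binom{k+1}{2}-1=(1+2+\cdots+k)-1$ colors; since layer $V_i$ needs at least $i$ colors, pigeonhole forces two layers to share a color, and --- this is what the gadget/pattern must be designed to deliver --- such a shortfall propagates to $\Omega_k(n^2)$ admissible vertex pairs that $\phi$ colors monochromatically. A proper $\phi$ requires all of these admissible pairs to be non-edges, which occurs with probability at most $(1-p)^{\Omega_k(n^2)}$; as there are only $\binom{k+1}{2}^{\,n}$ colorings, the union bound is $o(1)$ once $n$ is large in terms of $k$. Consequently, with positive probability $G$ is magical, contains $K_k$ but no $K_{k+1}$, and has no proper $(\binom{k+1}{2}-1)$-coloring, which with Lemma~\ref{semiposet} gives $\omega(G)=k$ and $\chi(G)=\binom{k+1}{2}$.

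The main obstacle is the scaffold itself. It must simultaneously (i) force the magical condition for the whole admissible family --- so the ``middle below outer'' requirements imposed on $<_2$ by all potential monotone triples must be mutually consistent, which is the delicate point; (ii) keep $\omega\le k$; and (iii) still be flexible enough that a random admissible edge set is at once sparse enough for (ii) and spread across the layers densely enough that a color shortfall really does create $\Omega_k(n^2)$ monochromatic admissible pairs, forcing every cheap coloring to fail. Because the target graph is extremal for $\chi\le\binom{\omega+1}{2}$, none of these constraints can be loosened, and reconciling (i) with (iii) --- checking that the between-layer edges one needs for a large chromatic number are compatible with the $<_2$-order that magicality imposes --- is where the real work lies.
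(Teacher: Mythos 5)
Your proposal is probabilistic in spirit like the paper's, and you correctly identify that the upper bound is automatic and the task is to build an extremal example, but the plan rests on a step that cannot be carried out and that you yourself flag as ``the delicate point'': choosing $<_1,<_2$ so that \emph{every} admissible edge set is automatically magical. This is impossible for any scaffold dense enough to force a large chromatic number. For any three blocks $B_i <_1 B_j <_1 B_l$ between which random edges may appear, whenever $ab,bc$ appear (with $a\in B_i$, $b\in B_j$, $c\in B_l$) but $ac$ does not, magicality demands $B_j <_2 B_i$ and $B_j <_2 B_l$; ranging $(i,j,l)$ over the triples needed to make the blocks chromatically interact yields contradictory constraints on $<_2$ (already $i<j<l$ and $i<l<m$ force $B_j <_2 B_l$ and $B_l <_2 B_j$). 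So no such scaffold exists, and the consistency issue you defer to ``the real work'' is a genuine dead end, not a technicality.

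The paper's construction avoids this entirely: the random graph $G_0$ is \emph{not} magical and is not required to be. Instead one passes to the unique minimal magical supergraph $G'$ (Lemma~\ref{mountainpaths}, the mountain-path closure), which is magical by construction. This shifts all the difficulty to controlling $\omega(G')$, because the closure adds edges and could create large cliques. The paper handles this by: (a) indexing the blocks by $S=\{(a,b)\in[k]^2: a+b\geq k+1\}$ (so $|S|=\binom{k+1}{2}$) and placing them in a staircase that encodes $<_1,<_2$; (b) showing via Claim~\ref{holes} that the expected number of ``hole'' triples that become triangles in $G'$ is tiny, so one can delete one vertex per bad hole; and (c) proving the purely combinatorial Claim~\ref{matrix}, which says any hole-free subset of $S$ has size at most $k$, hence any clique in the pruned graph has size at most $k$. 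The $\chi\ge|S|$ bound then follows from an $\alpha(G')\le(1+\lambda)n$ estimate (Claim~\ref{independent}), which is the counting you replace by a union bound over colorings --- an inessential variation. But the closure operation and the hole-counting/Claim~\ref{matrix} machinery, which are the heart of the paper's argument, are missing from your proposal, and without them (or a substitute resolving the $<_2$ consistency problem) the construction does not go through.
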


The rest of this section is devoted to the proof of this theorem. The proof is probabilistic and is inspired by a construction of Kor\'andi and Tomon \cite{KT}. We shall consider a random double-ordered graph with certain parameters, and show that the smallest magical graph covering its edges meets the requirements in Theorem \ref{mainmagical}. To accomplish this plan, we first examine how the smallest magical graph covering the edges of a given double-ordered graph looks like.

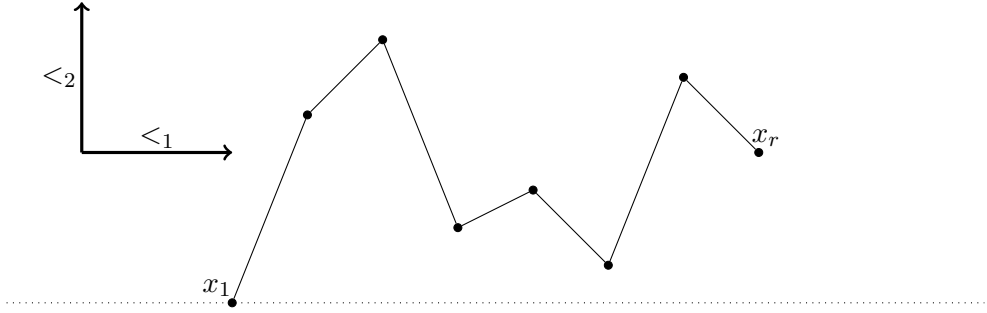
\begin{figure}[t]
	\begin{center}
		\begin{tikzpicture}[scale=1]
		
	    \node[vertex] (A1) at (0,0) {} ;
	    \node[vertex] (A2) at (1,2.5) {} ;
	    \node[vertex] (A3) at (2,3.5) {} ;
	    \node[vertex] (A4) at (3,1) {} ;
	    \node[vertex] (A5) at (4,1.5) {} ;
	    \node[vertex] (A6) at (5,0.5) {} ;
	    \node[vertex] (A7) at (6,3) {} ;
	    \node[vertex] (A8) at (7,2) {} ;
	
	    \draw (A1) -- (A2) -- (A3) -- (A4) -- (A5) -- (A6) -- (A7) -- (A8) ;
	    \draw[dotted] (-3,0) -- (10,0) ;
	
	    \node[] at (-0.2,0.2) {$x_{1}$} ;
	    \node[] at (7.1,2.2) {$x_{r}$} ;
	
	    \draw[->, very thick] (-2,2) -- (0,2) ;
	    \draw[->, very thick] (-2,2) -- (-2,4) ;
	    \node[] at (-1,2.2) {$<_{1}$} ;
	    \node[] at (-2.3,3) {$<_{2}$} ;

		\end{tikzpicture}
		\caption{A mountain path. The dotted line shows the minimum of $x_{1}$ and $x_{r}$ in $<_{2}$, so all the other points of the path must be above it.}
		\label{figure4}
	\end{center}
\end{figure}

Let $G_{<_{1},<_{2}}$ be a double-ordered graph. A sequence of vertices $x_{1},\dots,x_{r}\in V(G)$ is said to form a \emph{mountain-path}, if $x_{1}<_{1}...<_{1} x_{r}$, $x_{i}x_{i+1}\in E(G)$ for every $i (1\le i<r)$, and either $x_{1}<_{2}x_{2},...,x_{r-1}$ or $x_{r}<_{2}x_{2},\dots,x_{r-1}$. See Figure \ref{figure4}.

\begin{lemma}\label{mountainpaths}
	Let $G_{<_{1},<_{2}}$ be a double-ordered graph. There exists a unique minimal graph $G'_{<_{1},<_{2}}$ on $V(G)$ such that $E(G)\subset E(G')$ and $G'_{<_{1},<_{2}}$ is magical. Moreover, if $u,v\in V(G)$, then $u$ and $v$ are joined by an edge in $G'$ if and only if there exists a mountain-path connecting $u$ and $v$.
\end{lemma}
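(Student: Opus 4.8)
The plan is to define $G'$ explicitly by declaring $uv \in E(G')$ precisely when there is a mountain-path connecting $u$ and $v$ (with $u$ and $v$ playing the roles of the $<_1$-endpoints $x_1, x_r$), and then to verify the three required properties: $E(G) \subseteq E(G')$, that $G'_{<_1,<_2}$ is magical, and that $G'$ is contained in every magical graph on $V(G)$ covering $E(G)$ — which simultaneously gives uniqueness of the minimal one and the stated characterization. The inclusion $E(G) \subseteq E(G')$ is immediate, since an edge $uv \in E(G)$ with $u <_1 v$ is itself a mountain-path of length $2$ (the "interior vertices" condition is vacuous). For the lower-bound direction, I would argue by induction on the length $r$ of a mountain-path: if $x_1, \dots, x_r$ is a mountain-path and $H$ is any magical graph with $E(G) \subseteq E(H)$, then each consecutive pair $x_i x_{i+1}$ lies in $E(H)$, and one peels off an endpoint — say $x_r <_2 x_2, \dots, x_{r-1}$, so that $x_2, \dots, x_r$ is a shorter mountain-path (now with $x_r$ as the low $<_2$-endpoint), giving $x_2 x_r \in E(H)$ by induction; then the triple $x_1 <_1 x_2 <_1 x_r$ has $x_1 x_2, x_2 x_r \in E(H)$, and if $x_1 x_r \notin E(H)$ the magical property forces $x_2 <_2 x_1$ and $x_2 <_2 x_r$ — but $x_r <_2 x_2$ by the mountain-path condition, a contradiction, so $x_1 x_r \in E(H)$. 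The symmetric case ($x_1$ the low $<_2$-endpoint) is handled the same way by peeling off $x_1$ instead.

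The remaining and more delicate step is to check that $G'$ itself is magical. Suppose $a <_1 b <_1 c$ with $ab, bc \in E(G')$ and $ac \notin E(G')$; I must show $b <_2 a$ and $b <_2 c$. By definition there is a mountain-path $P_1$ from $a$ to $b$ and a mountain-path $P_2$ from $b$ to $c$. The natural idea is to concatenate $P_1$ and $P_2$ at $b$ to produce a walk from $a$ to $c$; since all interior vertices of $P_1$ lie strictly $<_1$-between $a$ and $b$, and all interior vertices of $P_2$ strictly $<_1$-between $b$ and $c$, the concatenation is still strictly increasing in $<_1$, hence is a path (no repeated vertices). To get a mountain-path out of it, one needs one of its two endpoints $a,c$ to be $<_2$-below every interior vertex — and the interior vertices now include $b$. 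Here is where the hypothesis that $ac \notin E(G')$ must be used: I would argue that if $b$ were $<_2$-above both $a$ and $c$, then one of the two endpoints $a,c$ would be $<_2$-minimal among all vertices of the concatenated path (whichever of $a,c$ is $<_2$-smaller), because within $P_1$ either $a$ or $b$ is the $<_2$-minimum and within $P_2$ either $b$ or $c$ is the $<_2$-minimum; assuming $b$ is above both $a$ and $c$ forces $a$ to be the min on $P_1$ and $c$ the min on $P_2$, so $\min(a,c)$ in $<_2$ dominates the whole path — making the concatenation a mountain-path from $a$ to $c$, whence $ac \in E(G')$, contradiction. Therefore $b <_2 a$ or $b <_2 c$; one then has to upgrade this to "$b <_2 a$ \emph{and} $b <_2 c$," which I expect follows by examining the two mountain-paths $P_1, P_2$ separately: if $b <_2 a$, then on $P_1$ the $<_2$-minimum endpoint cannot be $b$ (as $a <_2 b$ fails... wait, $b <_2 a$ means $b$ is smaller), so actually one re-runs the endpoint analysis on each of $P_1$ and $P_2$ using the single inequality just obtained to pin down the other.

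The main obstacle I anticipate is exactly this last bookkeeping: carefully tracking which endpoint of each mountain-path is the $<_2$-minimum and showing that the concatenation argument can be arranged so that the contradiction with $ac \notin E(G')$ is reached whenever $b \not<_2 a$ \emph{or} $b \not<_2 c$ — i.e.\ that a \emph{single} violation of the desired conclusion already lets one build a mountain-path from $a$ to $c$. One subtlety is that $P_1$ and $P_2$ might share $b$ as a non-extremal $<_2$-value, and another is that the definition of mountain-path is slightly asymmetric in which endpoint is $<_2$-small, so both sub-cases for each of $P_1, P_2$ must be written out. Once the magical property is verified, uniqueness of the minimal magical supergraph is automatic: $G'$ is magical and contains $E(G)$, and the induction above shows $G' \subseteq H$ for every such $H$, so $G'$ is the unique minimum, and the "if and only if" in the statement is precisely the two directions just proved.
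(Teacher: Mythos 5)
Your overall plan coincides with the paper's: define $G'$ via the mountain-path characterization, check $E(G)\subseteq E(G')$, show by induction that every magical $H\supseteq G$ must contain all mountain-path endpoint pairs (so $G'\subseteq H$), and check that $G'$ itself is magical by concatenating the two mountain-paths at the middle vertex. Your minimality induction is essentially the paper's; there is only a harmless slip at the end (in the case where $x_1$ is the $<_2$-dominant endpoint you would peel off $x_r$, not ``$x_1$ instead'' as you wrote, and apply magicality to the triple $x_1<_1 x_{r-1}<_1 x_r$).

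The genuine gap is in the magicality verification, and you diagnose it yourself. Your concatenation argument shows: if $a<_2 b$ \emph{and} $c<_2 b$, then the concatenation of $P_1$ and $P_2$ is a mountain-path, so $ac\in E(G')$. The contrapositive only gives $b<_2 a$ \emph{or} $b<_2 c$ under the assumption $ac\notin E(G')$ --- strictly weaker than the conjunction magicality requires. You flag that an ``upgrade'' is needed, but your sketch of it is garbled (you write ``on $P_1$ the $<_2$-minimum endpoint cannot be $b$,'' notice this is backwards, and then do not recover), so as written the proposal is incomplete.

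The correct fix is to show that a \emph{single} inequality $a<_2 b$ already forces a mountain-path from $a$ to $c$ (the case $c<_2 b$ is symmetric). From $a<_2 b$ and the mountain condition on $P_1$, the $<_2$-smaller of $P_1$'s endpoints is $a$, so $a<_2$ every interior vertex of $P_1$. Now look at $P_2$: if $b$ dominates $P_2$'s interior, then $a<_2 b<_2(\text{interior of }P_2)$, so $a$ dominates the entire concatenation. If instead $c$ dominates $P_2$'s interior, then whichever of $a,c$ is $<_2$-smaller dominates everything: if $a<_2 c$ then $a<_2 c<_2(\text{interior of }P_2)$ and $a$ still dominates; if $c<_2 a$ then $c<_2 a<_2 b$, $c<_2 a<_2(\text{interior of }P_1)$, and $c<_2(\text{interior of }P_2)$, so $c$ dominates. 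Either way $ac\in E(G')$, contradiction. The paper does exactly this, but stated in one shot as the contrapositive ``$u<_2 v$ or $w<_2 v$ implies $uw\in E(G')$,'' so it never passes through the intermediate disjunction that you then have to repair.
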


\begin{proof}
	Let $H=H_{<_{1},<_{2}}$ be any magical graph on the vertex set $V(G)$ such that $E(G)\subseteq E(H)$. Let $x_{1},...,x_{r}$ be a mountain-path in $G$ with $x_{1}<_{2}x_{r}$. Using the definition of magical graphs, it is easy to prove by induction on $i$ that $x_{1}$ and $x_{i}$ are joined by an edge in $E(H)$, for every $i>1$. Therefore, we have $x_{1}x_{r}\in E(H)$. (We can proceed similarly if $x_{r}<_{2}x_{1}$.)
	
	With a slight abuse of notation, from now on let $H=H_{<_{1},<_{2}}$ denote the double-ordered graph on $V(G)$, in which $u$ and $v$ are joined by an edge if and only if there exists a mountain-path connecting $u$ to $v$. We will show that $H$ is magical, that is, for every triple $u,v,w\in V(G)$, the following holds: if $u<_{1}v<_{1}w$ such that  $uv, vw\in E(H)$, and $u<_{2}v$ or $w<_{2}v$, then $uw\in E(H)$. As $uv,vw\in E(H)$, there exist two  mountain-paths $u=x_{1},x_2,...,x_{r}=v$ and $v=x_{1}',x_2',...,x'_{r'}=w$. However, this implies that $u=x_{1},\dots,x_{r},x'_{2},\dots,x'_{r'}=w$ is a mountain-path between $u$ and $w$, so that $uw\in E(H)$.
\end{proof}

For the rest of the discussion, we need to introduce a few parameters that depend on $k$. Set $\lambda=1/k^{2}$, $t=20k^{2}\log k$,  $h=t^{k^{2}}k^{2k^{2}+8}$, $n=6h$ and $p=t/n$.

Let $S=\{(a,b)\in [k]^{2}: a+b\geq k+1\}$. For each $(a,b)\in S$, let $A_{a,b}$ be a set of $n$ arbitrary points in the interior of the unit square $[ak+b,ak+b+1]\times [bk+a,bk+a+1]$  with distinct $x$ and $y$ coordinates, see Figure \ref{figure2}. Let $V=\bigcup_{(a,b)\in S}A_{a,b}$, and let $<_{1}$ and $<_{2}$ be the total orderings on $V$ induced by the $x$ and $y$ coordinates of the elements of $V$, respectively. A pair of vertices $\{u,v\}$ in $V$ is called \emph{available} if $u\in A_{a,b}, v\in A_{a',b'}$ with $(a,b)\neq (a',b')$.

Let $G_{0}$ denote the {\em random graph} on $V$ in which every available pair of vertices is connected by an edge with probability $p$, independently from each other. $G_{0}$ does not have any edge whose endpoints belong to the same set $A_{a,b}$. Let $G'_{<_{1},<_{2}}$ be the minimal magical graph on $V$ containing all edges of $G_{0}$.

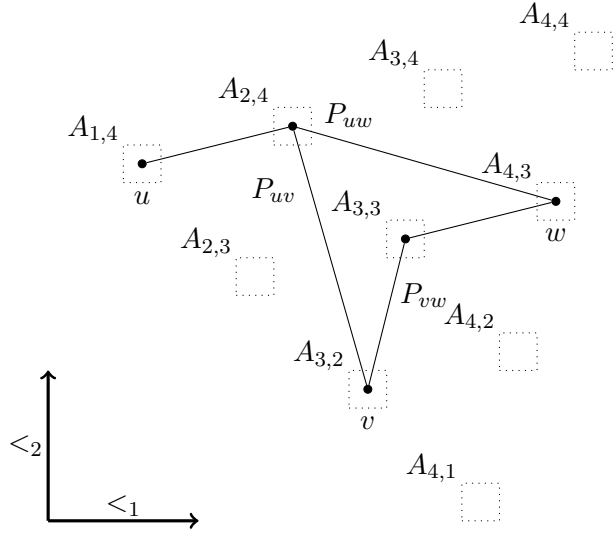
\begin{figure}[t]
	\begin{center}
		\begin{tikzpicture}[scale=2]
		
		\draw[->, very thick] (1.5,2) -- (2.5,2) ;
		\draw[->, very thick] (1.5,2) -- (1.5,3) ;
		\node[] at (2,2.1) {$<_{1}$} ;
		\node[] at (1.35,2.5) {$<_{2}$} ;
		
		\foreach \i in {1,...,4}
		{
			\foreach \j in {1,...,4}
			{
				
				\pgfmathtruncatemacro\result{\i+\j}
				\ifthenelse{\result>4}{\draw[dotted] (\i+\j/4,\j+\i/4) -- (\i+\j/4+0.25,\j+\i/4) -- (\i+\j/4+0.25,\j+\i/4+0.25) -- (\i+\j/4,\j+\i/4+0.25) -- (\i+\j/4,\j+\i/4) ;}{;}			
				\ifthenelse{\result>4}{\node[] at (\i+\j/4-0.2,\j+\i/4+0.35) {$A_{\i,\j}$} ;}{;}
			}
			
		}
		
		\node[vertex] (U) at (2.125,4.375) {} ; \node[] at (2.125,4.15) {$u$} ;
		\node[vertex] (V) at (3.625,2.875) {} ; \node[] at (3.625,2.65) {$v$} ;
		\node[vertex] (W) at (4.875,4.125) {} ; \node[] at (4.875,3.9) {$w$} ;
		\node[vertex] (X1) at (3.125,4.625) {} ;
		\node[vertex] (X2) at (3.875,3.875) {} ;
		
		\draw (U) -- (X1) -- (W) ; \node[] at (3,4.2) {$P_{uv}$} ;
		\draw (X1) -- (V) ; \node[] at (3.5,4.7) {$P_{uw}$} ;
		\draw (V) -- (X2) -- (W) ; \node[] at (4,3.5) {$P_{vw}$} ;
		
		%		\node[] at (-1,6.75) {$V_{1,1}$};

		\end{tikzpicture}
		\caption{An illustration of the sets $A_{a,b}$ for $k=4$, and a hole $(u,v,w)$ which induces a triangle in $G'$.}
		\label{figure2}
	\end{center}
\end{figure}

\begin{claim}\label{independent}
	 With probability at least $2/3$, $G'$ has no independent set larger than $(1+\lambda)n$.
\end{claim}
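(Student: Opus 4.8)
\textbf{Proof proposal for Claim \ref{independent}.}
The plan is to show that, with probability $1-o(1)$ (hence in particular with probability at least $2/3$ once $k$ is large, the finitely many remaining cases being absorbed into the constants), $G'$ has no independent set of size $\lceil(1+\lambda)n\rceil$; since independent sets are downward closed, this suffices. Fix a hypothetical independent set $I$ of that size and write $m_{a,b}=|I\cap A_{a,b}|$ for $(a,b)\in S$. Two preliminary observations: first, since distinct blocks occupy squares with pairwise disjoint $x$-projections, no mountain-path (hence no edge of $G'$) can have both endpoints in one block, so $m_{a,b}\le n$ for all $(a,b)$ and therefore the mass $\sum m_{a,b}=\lceil(1+\lambda)n\rceil$ must spill substantially onto more than one block; second, $I$ is independent in $G_0$ and contains no two vertices joined by a mountain-path in $G_0$, and the whole argument consists of exploiting this for carefully chosen short and long mountain-paths.

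First I would record the pseudorandomness of $G_0$ that we need, via Chernoff and union bounds. The crucial bookkeeping point is that $n$ is chosen astronomically larger than any polynomial in $t$ and $k$: we may union over $(\mathrm{poly}\,n)^{O(k^2)}$ events of failure probability $e^{-\Omega(n)}$, and over $\mathrm{poly}(k)$ events of failure probability $e^{-\Omega(t)}$, but \emph{not} over $2^{\Theta(n)}$ events unless each fails with probability $e^{-\Omega(n)}$. Concretely, with probability $1-o(1)$: (i) for all blocks $B\neq B'$ and all $T\subseteq B$, $T'\subseteq B'$ with $|T|\,|T'|\ge 3n^2/t$ there is a $G_0$-edge between $T$ and $T'$ — so $m_{a,b}\,m_{a',b'}\le 3n^2/t$ for all pairs, and every $T$ with $|T|\ge 30n/t$ has more than $0.9n$ $G_0$-neighbours in every other block; and (ii) for all blocks $B,B'$ and all $T\subseteq B'$ with $|T|\ge 0.9n$, fewer than $n/(2t)$ vertices of $B$ have no $G_0$-neighbour in $T$. (In (i) and (ii) the per-subset failure probability is genuinely $e^{-\Omega(n)}$, which is why the union over the $\le 2^{n}$ subsets is affordable.)

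Next is the combinatorial analysis of $S$. I would determine exactly which ordered pairs $(A_{a,b},A_{a',b'})$, $(a,b)<_1(a',b')$, admit a \emph{high intermediate block}: a block $B''$ with $A_{a,b}<_1B''<_1A_{a',b'}$ lying entirely above $A_{a,b}$ or entirely above $A_{a',b'}$ in $<_2$ — for such a pair, one edge into $B''$ and one edge out of $B''$ form a length-$3$ mountain-path. A short check shows the non-connectable pairs are precisely those consecutive in a row of $S$, together with those of the form $\{A_{a,k},A_{a+1,b'}\}$; these form a graph $\mathcal B$ on the blocks with $\omega(\mathcal B)=3$. Using (i) one shows: if two connectable blocks each carry $\ge n/t$ vertices of $I$, there is a connector through a high intermediate (the "no connector" probability is $e^{-\Omega(n)}$ and the union over the $\le\binom{n}{\le\sqrt{3/t}\,n}^2=e^{o(n)}$ relevant subsets is affordable), contradicting independence. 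Hence $\{(a,b):m_{a,b}\ge n/t\}$ is a clique of $\mathcal B$, so at most three blocks are this "heavy"; and $m_{a,b}m_{a',b'}\le 3n^2/t$ among them forces all but one heavy block to carry $O(n/t)$ vertices, which together with $\sum m_{a,b}\ge(1+\lambda)n$ pins a unique main block $B^\ast$ down to $m_{B^\ast}\ge n/2$.

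The remaining and hardest step is to show the blocks other than $B^\ast$ contribute at most $\lambda n$ in total; a per-block bound $m_{a,b}=O(n/t)$ is \emph{not} enough, since there are $\Theta(k^2)$ blocks and $\lambda n=n/k^2$. Here I would use long mountain-paths together with the geometry of $S$: from $I\cap B^\ast$ (which by (i) reaches more than $0.9n$ vertices of any block) one propagates along a $<_2$-monotone chain of blocks lying $<_1$-beyond $B^\ast$, using (ii) to keep the reachable set of size more than $n-n/(2t)$ at every step; if such a chain reaches a block $B'$ with $m_{B'}\ge n/(2t)$, then $I\cap B'$ cannot avoid the reachable set, producing a mountain-path — a contradiction. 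The combinatorial content to be extracted from the definition of $S$ is that any distribution of $I$ over the blocks respecting the constraints already derived, yet with total mass exceeding $(1+\lambda)n$, must admit such a chain from $B^\ast$, \emph{unless} essentially all of the excess above $n$ sits on the bounded family of blocks non-connectable to $B^\ast$; the latter family's joint $I$-mass is then bounded by $\lambda n$ via (i) and the fact that the heavy set meets it in at most a constant number of blocks. Making this last case analysis airtight — identifying the exceptional blocks for an arbitrary $B^\ast$ and controlling their combined $I$-mass and their internal $\mathcal B$-structure — is the main obstacle; the rest is Chernoff estimates plus the union-bound budget described above.
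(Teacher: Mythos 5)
You have missed the one-line reduction that makes this claim elementary. Since $E(G_0)\subseteq E(G')$, every independent set of $G'$ is already an independent set of $G_0$, so it suffices to show that $G_0$ itself has no independent set of size $(1+\lambda)n$. You actually write down the premise of this reduction (``$I$ is independent in $G_0$ and contains no two vertices joined by a mountain-path''), but then, instead of discarding the mountain-path condition as superfluous, you build the entire argument around it. Once you work only with $G_0$, the first-moment bound is immediate and requires none of the structural analysis of $S$, mountain-paths, heavy blocks, or chains: any $I\subseteq V$ with $|I|>(1+\lambda)n$ contains at least $\lambda n^2/2$ available pairs (each vertex of $I$ has at most $n$ non-available partners, hence more than $\lambda n$ available partners inside $I$), so
$$\Pr\bigl(I\ \text{independent in}\ G_0\bigr)\le (1-p)^{\lambda n^2/2}<e^{-t\lambda n/2},$$
and the union bound over the at most $\binom{|V|}{(1+\lambda)n}<(ek^2)^{(1+\lambda)n}$ candidate sets gives a failure probability $e^{(1+2\log k)(1+\lambda)n-t\lambda n/2}<1/3$, by the choice $t=20k^2\log k$, $\lambda=1/k^2$. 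Your worry that the per-set failure probability might not beat the $2^{\Theta(n)}$-fold union bound is unfounded: the exponent $t\lambda n/2=10n\log k$ comfortably dominates the entropy $\Theta(n\log k)$ of the union, with no need for Chernoff-type block counting.

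Beyond the missed reduction, your plan is incomplete by your own admission: you state explicitly that ``making this last case analysis airtight \ldots\ is the main obstacle,'' and the proposed classification of which block pairs admit a high intermediate, the propagation along $<_2$-monotone chains, and the control of the exceptional blocks near $B^\ast$ are all left as sketches without a verified endgame. Even if that programme could be completed, it reproves a strictly stronger (and unnecessary) statement about the structure of independent sets of the magical closure, when the claim only requires an independence-number bound for the much simpler random graph $G_0$. In short: the gap is the overlooked monotonicity $E(G_0)\subseteq E(G')$, and the proposal as written does not constitute a proof.
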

\begin{proof}
	As $G_{0}$ is a subgraph of $G'$, it is enough to show that $G_{0}$ has no independent set of size larger than $(1+\lambda)n$, with probability at least $2/3$.
	
	 Let $I\subset V$ such that $|I|>(1+\lambda)n$. Then there are at least $\lambda n^{2}/2$ available pairs of vertices, whose both endpoints belong to $I$. Indeed, if $u\in A_{a,b}$, then $\{u,v\}$ is available for every $v\in (I\setminus A_{a,b})$, so there are at least $|I\setminus A_{a,b}|\geq \lambda n$ available pairs containing $u$. Hence, the total number of available pairs in $I$ is at least $|I|\lambda n/2>\lambda n^{2}/2$.
	
Thus, the probability that $I$ is an independent set in $G_{0}$ is at most
	$$(1-p)^{\lambda n^{2}/2}<e^{-p\lambda n^{2}/2}=e^{-t\lambda n/2}.$$
As the number of $(1+\lambda)n$-sized subsets of $V$ is
	$$\binom{|V|}{(1+\lambda)n}<\left(\frac{e|V|}{(1+\lambda n)}\right)^{(1+\lambda)n}<(ek^{2})^{(1+\lambda)n},$$
the probability that there is a $(1+\lambda)n$-sized independent set is less than
	$$(ek^{2})^{(1+\lambda)n}e^{-t\lambda n/2}=e^{(1+2\log k)(1+\lambda)n-t\lambda n/2}<1/3.$$
\end{proof}

A triple $(u,v,w)\in V^{3}$ is said to form a \emph{hole}, if $u<_{1}v<_{1}w$ and $v<_{2}u,w$. Recall that $h=t^{k^{2}}k^{2k^{2}+8}$.

\begin{claim}\label{holes}
	Let $N$ be the number of holes in $V$ that induce a triangle in $G'$. Then $\mathbb{E}(N)<h$.
\end{claim}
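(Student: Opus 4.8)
The goal is to bound $\mathbb{E}(N)$, where $N$ counts triples $(u,v,w)$ forming a hole (meaning $u<_1 v<_1 w$ and $v<_2 u,w$) that induce a triangle in $G'$. By Lemma~\ref{mountainpaths}, the edges of $G'$ are exactly the pairs joined by a mountain-path in $G_0$. So for a fixed hole $(u,v,w)$, the event that it induces a triangle in $G'$ means there exist three mountain-paths in $G_0$: one connecting $u$ and $v$, one connecting $u$ and $w$, one connecting $v$ and $w$. The plan is to (1) show that because $(u,v,w)$ is a hole, each of these mountain-paths must be ``short'' — in fact have a bounded number of internal vertices, because the geometry of the blocks $A_{a,b}$ (only $|S| = \binom{k+1}{2} \le k^2$ of them) together with the mountain-path monotonicity constraints forces the path to stay within a controlled region and visit few blocks; (2) union-bound over the choice of the three paths and the internal vertices, multiplying by the probability that all the required edges of $G_0$ are present.

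**Key steps.** First I would pin down the structural lemma: if $x_1,\dots,x_r$ is a mountain-path with, say, $x_1 <_2 x_r$, then all internal vertices $x_2,\dots,x_{r-1}$ lie $<_2$-above $x_1$ and $<_1$-between $x_1$ and $x_r$ (monotone in the first order, bounded below in the second). Combined with the fact that a hole has its middle vertex $v$ strictly $<_2$-below both $u$ and $w$, and the specific placement of the blocks $A_{a,b}$ (the block $A_{a,b}$ occupies the unit square $[ak+b,\,ak+b+1]\times[bk+a,\,bk+a+1]$, so the $<_1$-order and $<_2$-order of blocks are essentially independent), one argues that a mountain-path between two vertices whose blocks are incomparable in the relevant sense can pass through only a bounded number — at most $k^2$, the number of blocks — of blocks, and within each block it can use only a bounded number of vertices before the monotonicity forces it out; so the total number of internal vertices of each of the three mountain-paths is $O(k^2)$ (I expect the bound to be something like $\le k^2$ internal vertices, matching the exponent $t^{k^2}$ in $h$). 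Next, for a fixed hole $(u,v,w)$ and a fixed choice of the internal vertices of all three mountain-paths, every edge of $G_0$ needed is an available pair, and the number of such required edges is the total length of the three paths, which is (number of internal vertices) $+$ $O(1)$; each is present independently with probability $p = t/n$. Then I would union-bound: the number of ways to choose the hole is at most $n^3 \cdot k^6$ (choosing three blocks and a vertex in each — actually $\le |V|^3 \le (k^2 n)^3$); the number of ways to choose up to $3k^2$ internal vertices is at most $(k^2 n)^{3k^2}$ or so; and each configuration contributes a factor $p^{(\text{number of edges})}$. Putting $N$'s expectation as a sum over holes and path-configurations of these $p$-powers, and using $p = t/n$ so that each $n$ from a vertex choice is cancelled by a $1/n$ from an edge probability, one is left with a bound that is polynomial in $t$ and $k$ with exponent controlled by $k^2$, i.e. of the form $t^{O(k^2)} k^{O(k^2)}$, which with the chosen $h = t^{k^2} k^{2k^2+8}$ should come out to $\mathbb{E}(N) < h$ after checking the constants line up.

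**Main obstacle.** The delicate part is the structural/counting bookkeeping in step (1)–(2): one must show that the number of edges of $G_0$ that a mountain-path of a given internal-vertex set requires exceeds (or at least keeps pace with) the number of free vertex choices made, so that the powers of $n$ cancel and only powers of $t$ (which is only polylogarithmic in $n$) and $k$ survive. Concretely, a mountain-path with $m$ internal vertices uses $m+1$ edges of $G_0$, and choosing those $m$ internal vertices costs a factor $\le |V|^m$; since $p^{m+1} = (t/n)^{m+1}$, the product $|V|^m p^{m+1} \le (k^2 n)^m (t/n)^{m+1} = k^{2m} t^{m+1} / n$, which is small — the extra edge beyond the internal vertices is what makes this work, and one has to make sure this accounting is done correctly across all three paths simultaneously (they share the endpoints $u,v,w$, so there is a mild inclusion–exclusion subtlety: the three shared endpoints are counted once but contribute edges to two paths each). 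I also need to be careful that the geometric argument genuinely forces the internal-vertex count to be $O(k^2)$ and not larger — this is exactly where the hole condition $v <_2 u, w$ is essential, since without it a mountain-path could meander through all of $V$ and the bound would collapse. Handling the symmetric cases ($x_1 <_2 x_r$ versus $x_r <_2 x_1$ for each of the three paths, and the relative $<_1, <_2$ positions of $u,v,w$ within the hole) is routine but must be stated to avoid gaps.
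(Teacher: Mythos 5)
Your overall plan (mountain-path characterization from Lemma~\ref{mountainpaths}, union bound over path configurations, and the $p^{\text{edges}}$ versus $n^{\text{vertex-choices}}$ balance) is the right shape, but two things go wrong, and they are linked.

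First, you misplace the role of the hole condition. You claim $v <_2 u,w$ is what keeps the mountain-paths from ``meandering through all of $V$.'' That is not where the length bound comes from: any mountain-path in $G_0$ is strictly $<_1$-increasing, the blocks $A_{a,b}$ occupy disjoint $x$-intervals (hence contiguous $<_1$-ranges), and $G_0$ has no intra-block edges, so a mountain-path visits each block at most once and automatically has at most $|S|\le k^2$ vertices, hole or no hole. The hole condition is doing something else entirely, and it is exactly the thing you wave off as a ``mild inclusion--exclusion subtlety.''

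Second, and this is the genuine gap: your per-path accounting (``$m$ internal vertices cost $|V|^m$, give $m+1$ edges, net factor $1/n$'') breaks down when the three mountain-paths $P_{uv},P_{vw},P_{uw}$ share edges, because then the probability is $p^{|E(P)|}$ with $|E(P)|$ possibly much smaller than $m_1+m_2+m_3+3$. In the worst imaginable case the union $P=P_{uv}\cup P_{vw}\cup P_{uw}$ could be a tree, giving $|E(P)|=|V(P)|-1$ and only an $n^{-2}$ per-hole factor; multiplied by the $\Theta(n^3)$ holes this does not stay bounded. The paper closes this gap by observing that the hole condition $v<_2 u,w$ forces $P_{uw}$ not to pass through $v$ (internal vertices of a mountain-path lie $<_2$-above the lower endpoint), so $P_{uw}$ and $P_{uv}\cup P_{vw}$ are two distinct $u$--$w$ walks, hence $P$ contains a cycle, hence $|E(P)|\ge |V(P)|$. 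This is what makes $p^{|E(P)|}\le p^{|V(P)|}$ valid, and it is the step that yields the crucial $n^{-3}$ per hole. Your sketch identifies the correct ingredients but leaves out the one observation the whole estimate hinges on, and it attributes the hole hypothesis to a step where it is not needed.
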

\begin{proof}
	Let $(u,v,w)$ be a hole, and let us bound the probability that $u,v,w$ induce a triangle in $G'$. Suppose that $u\in A_{a_{1},b_{1}}$, $v\in A_{a_{2},b_{2}}$ and $w\in A_{a_{3},b_{3}}$. We can assume that the pairs $(a_{1},b_{1}),(a_{2},b_{2}),(a_{3},b_{3})$ are distinct, otherwise $u,v,w$ cannot induce a triangle. %As $(u,v,w)$ is a hole, we must have $a_{1}\leq a_{2}<a_{3}$,  $b_{2}<b_{1}$, and $b_{2}\leq b_{3}$.
	
	If $uv,vw,uw\in E(G')$, then there exist three mountain-paths, $P_{u,v}$, $P_{v,w}$ and $P_{u,w}$, with endpoints $\{u,v\}$, $\{v,w\}$ and $\{u,w\}$, respectively. See Figure \ref{figure2}. Note that each of these paths intersects every $A_{a,b}$ in at most one vertex. As $u<_{1}v<_{1}w$, the only vertex in the intersection of  $P_{uv}$ and $P_{vw}$ is $v$. Moreover, $P_{uw}$ cannot contain $v$ as $v<_{2}u$ and $v<_{2}w$.

Consider the graph $P=P_{uv}\cup P_{vw}\cup P_{uw}$. It is a connected graph, but not a tree, because there are two distinct paths between $u$ and $w$: $P_{uv}\cup P_{vw}$ and $P_{uw}$. Hence, we have $|E(P)|\geq |V(P)|$. Let $\mathcal{P}$ denote the set of all such graphs $P$ that appear in $G_{0}$ with positive probability. Then
	\begin{align*}
		\mathbb{P}(\{u,v,w\}\mbox{ induces a triangle in }G')&=\mathbb{P}(P\mbox{ is a subgraph of }G_{0}\mbox{ for some }P\in\mathcal{P})\\
		&\leq \sum_{P\in\mathcal{P}}\mathbb{P}(P\mbox{ is a subgraph of }G_{0}).
	\end{align*}
	
	 For a fixed $P\in\mathcal{P}$, every edge of $P$ is present in $G_{0}$ independently with probability $p$. Hence, the probability that $P$ is a subgraph of $G_{0}$ is $p^{|E(P)|}$, which is at most $p^{|V(P)|}$. The number of graphs in $\mathcal{P}$ with exactly $m$ vertices is at most $\binom{|V|}{m-3}<(k^{2}n)^{m-3}$, as each member of $\mathcal{P}$ contains the vertices $u,v,w$. Finally, every member of $\mathcal{P}$ has at most $|S|\leq k^{2}$ vertices, so we can write
	
	 $$\sum_{P\in\mathcal{P}}\mathbb{P}(P\mbox{ is a subgraph of }G_{0})\leq \sum_{m=3}^{k^{2}}p^{m}(k^{2}n)^{m-3}<t^{k^{2}}k^{2k^{2}+2}n^{-3}.$$
	 Since the number of holes in $V$ is at most $\binom{|V|}{3}<|V|^{3}<k^{6}n^{3}$, we obtain
	 $$\mathbb{E}(N)<t^{k^{2}}k^{2k^{2}+8}=h.$$
\end{proof}

Applying Markov's inequality, the probability that $V$ contains more than $3h$ holes that induce a triangle in $G'$ is at most $1/3$. Hence, there exists a magical graph $G'$ on $V$ such that $G'$ has no independent set of size $(1+\lambda)n$, and $G'$ contains at most $3h$ triangles whose vertices form a hole. By deleting a vertex of each such hole in $G'$, we obtain a magical graph $G$ with at least $|S|n-3h$ vertices, which has no triangle whose vertices form a hole, and no independent set of size $(1+\lambda)n$.

First, we show that $\chi(G)\geq |S|=\binom{k+1}{2}$. Indeed, if $\chi(G)\leq |S|-1$, then $G$ contains an independent set of size
$$\frac{|V(G)|}{|S|-1}\geq \frac{|S|n-3h}{|S|-1}=\left(1+\frac{1}{|S|-1}\right)n-\frac{3h}{|S|-1}>(1+\lambda)n,$$
contradiction.

It remains to prove that $\omega(G)=k$. Clearly, $\omega(G)\ge k$, otherwise, by Lemma~\ref{semiposet}, we would have $\chi(G)\le \binom{k}{2}$, contradicting the last paragraph. Thus, we have to show that $G$ has no clique of size $k+1$. For this, we need the following observation.

\begin{claim}\label{matrix}
	Let $K$ be a subset of $S$ that does not contain three points $(a_{1},b_{1}),(a_{2},b_{2}),(a_{3},b_{3})$ such that $a_{1}<a_{2}\leq a_{3}$ and $b_{2}\leq b_{1}$ and $b_2<b_3$. Then we have $|K|\leq k$.
\end{claim}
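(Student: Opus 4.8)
The plan is to show that a set $K \subseteq S$ avoiding the forbidden three-point pattern cannot be too large by extracting, from $K$, a structured chain of points whose length is bounded by the constraint $a+b \ge k+1$ together with $a,b \le k$. First I would reformulate the forbidden configuration: $K$ contains no three points $(a_1,b_1),(a_2,b_2),(a_3,b_3)$ with $a_1 < a_2 \le a_3$, $b_2 \le b_1$, and $b_2 < b_3$. The natural move is to sort the points of $K$ by their first coordinate. Since all points lie in $[k]^2$, several points of $K$ may share an $a$-value; I would first argue that within a fixed column $\{a\} \times [k]$ the set $K$ can contain at most one point — or, more carefully, handle columns and the interaction between columns together, because the pattern mixes a strict inequality $a_1 < a_2$ with a non-strict one $a_2 \le a_3$.

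The key step is the following: order the distinct points of $K$ as $(a_1,b_1),\dots,(a_m,b_m)$ with $a_1 \le a_2 \le \dots \le a_m$, breaking ties by decreasing $b$-coordinate. I claim the sequence $b_1, b_2, \dots, b_m$ must be \emph{strictly increasing after the first step}, or more precisely that the pattern-avoidance forces $b_1 \ge b_2 \ge \cdots$ to fail in a controlled way — so instead I would look for the cleanest invariant. Concretely: if two points $(a_i,b_i),(a_j,b_j) \in K$ have $a_i < a_j$, then taking the pair as the ``$(a_1,b_1),(a_3,b_3)$'' slots (with the middle point equal to one of them, using the non-strict inequalities $a_2 \le a_3$, $b_2 \le b_1$) I can derive that $b_i$ and $b_j$ cannot both be ``large'' relative to each other in the wrong order; carrying this out shows $K$ is essentially a chain that is monotone in a suitable sense. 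The upshot I expect is that the map $(a,b) \mapsto a+b$ (or $a - b$) is injective on $K$, or takes at most $k$ values; combined with $k+1 \le a+b \le 2k$ being only $k$ possible values, this yields $|K| \le k$.

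The main obstacle is getting the tie-breaking exactly right so that the forbidden pattern — which has one strict and one non-strict inequality in each coordinate — actually rules out the configurations I need, rather than leaving a loophole when two points share a coordinate. I would handle this by a careful case analysis: first show no two points of $K$ share the same $a$-coordinate (if $(a,b),(a,b') \in K$ with $b < b'$, use a third point with larger $a$, which must exist unless $|K|$ is already small, to build the pattern with $a_1 < a_2 = a_3 = a$... checking this forces the needed bound or a contradiction), then with all $a$-coordinates distinct show the $b$-coordinates, read in increasing order of $a$, are non-increasing (else an increase among the $b$'s together with a later point gives the pattern). A strictly-increasing sequence of $a$'s of length $m$ with non-increasing $b$'s, all in $[k]$ with $a+b \ge k+1$, forces $a_i + b_i \ge k+1$ while $a_i \ge i$ and $b_i \le k$, and more to the point the pairs $(a_i,b_i)$ with distinct $a_i \in [k]$ already give $m \le k$ directly. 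So once the monotonicity structure is pinned down, the counting is immediate; the real work is the pattern-avoidance bookkeeping in the degenerate (shared-coordinate) cases.
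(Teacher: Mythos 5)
Your proposal contains a genuine gap. The two structural claims you rely on --- that no two points of $K$ share an $a$-coordinate, and that, assuming distinct $a$'s, the $b$-coordinates read in increasing order of $a$ are non-increasing --- are both false, and they cannot be patched by the case analysis you sketch. For the first: the full column $K=\{(k,1),(k,2),\dots,(k,k)\}$ lies in $S$, contains no forbidden triple (the pattern requires $a_1<a_2$, impossible when all $a$'s are equal), and already achieves $|K|=k$; so multi-point columns are genuinely present in the extremal examples, not a degenerate nuisance to be ruled out. For the second: with $k=5$ the set $K=\{(3,3),(4,4),(5,5)\}$ has distinct $a$'s, strictly increasing $b$'s, and no forbidden triple (the middle $b$-value is never $\leq$ the left one). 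Your side remark that $a+b$ might be injective on $K$ also fails: $\{(2,4),(3,3),(4,2)\}$ with $k=5$ avoids the pattern yet has constant $a+b=6$. Even the local reasoning you sketch for the shared-column case is reversed: to place $(a,b),(a,b')$ in the $(a_2,b_2),(a_3,b_3)$ slots you need a third point with \emph{smaller} $a$, not larger, and even then a forbidden triple forms only when that third point has $b$-value at least $\min(b,b')$, which need not hold.

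Once shared columns are admitted, the ``distinct $a$'s immediately give $m\leq k$'' counting no longer applies, and that is exactly the regime your sketch does not handle. The paper's proof handles it by induction on $k$ with a dichotomy that has no analogue in your outline: either the column $a=k$ contains at most one point of $K$, in which case one deletes it and applies $(a,b)\mapsto(a,b-1)$ to land inside $S_{k-1}$; or the column $a=k$ contains at least two points, in which case the row $b=k$ contains at most one point of $K$ (any $(a,k)$ with $a<k$ together with two column-$k$ points forms a forbidden triple), so one deletes that and applies $(a,b)\mapsto(a-1,b)$ instead. Without a mechanism of this kind for peeling off a single row or column, your argument does not close.
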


\begin{proof}
	We call $(a_{1},b_{1}),(a_{2},b_{2}),(a_{3},b_{3})$ a \emph{bad triple}, if $a_{1}<a_{2}\leq a_{3}$ and $b_{2}\leq b_{1}$ and $b_{2}<b_{3}$.
	
	Let $S=S_k$. We prove the claim by induction on $k$. For $k=1$, the claim is trivial. Suppose that $k\geq 2$ and that the statement has already been verified for $k-1$. We distinguish two cases.
	
	{\em Case 1:} $K$ contains at most $1$ element from the column $\{(k,b):b\in [k]\}$. Let  $$K'=\{(a,b): (a,b+1)\in K \mbox{ and } a<k \}.$$ Then $|K'|\geq |K|-1$ and $K'\subset S_{k-1}$ does not contain a bad triple. Thus, by the induction hypothesis, we have $|K'|\leq k-1$, which implies that $|K|\leq k$.
	
	{\em Case 2:} $K$ contains $2$ distinct elements of the form $(k,b)$ and $(k,b')$, where $b < b'$. Then $K$ cannot contain $(a,k)$ for any $a\in [k-1]$, otherwise $(a,k),(k,b),(k,b')$ would be a bad triple. Thus, $K$ contains at most one element from the row $\{(k,a):a\in [k]\}$ (it might contain $(k,k)$). Let  $$K'=\{(a,b):(a+1,b)\in K \mbox{ and } b\leq k-1\}.$$
Again, $|K'|\geq |K|-1$ and $K'\subset S_{k-1}$ does not contain a bad triple. By the induction hypothesis, we have $|K'|\leq k-1$ and, hence, $|K|\leq k$.
\end{proof}

Now we are in a position to finish the proof of Theorem \ref{mainmagical}. Let $G$ denote the magical graph obtained from $G'$ by deleting a vertex from each of its holes (see right before Claim~\ref{matrix}). Suppose that $C\subset V$ is a clique in $G$. Then $C$ does not contain a hole and it intersects each $A_{a,b}$ in at most one vertex. Let $K=\{(a,b)\subset S:A_{a,b}\cap C\neq \emptyset\}.$ The condition that $C$ does not contain a hole implies that $K$ does not contain three points $(a_{1},b_{1}),(a_{2},b_{2}),(a_{3},b_{3})$ such that $a_{1}<a_{2}\leq a_{3}$ and $b_{2}\leq b_{1}$ and $b_{2} < b_{3}$. Hence, by Claim \ref{matrix}, we have $|C|=|K|\leq k$. This completes the proof of Theorem \ref{mainmagical} and, hence, the proof of Theorem~\ref{construction}.

\section{Bounding function for curves that intersect a vertical line\\--Proof of Theorem~\ref{upperbound2}}\label{sect:vertical}

A \emph{triple-ordered} graph is a graph $G_{<_{1},<_{2},<_{3}}$ with three total orders $<_{1},<_{2},<_{3}$ on its vertex set.

\begin{definition}\label{2magic}
A {\em triple-ordered graph} $G_{<_{1},<_{2},<_{3}}$ is called \emph{double-magical}, if there exist two magical graphs $G^{1}_{<_{1},<_{2}}$ and $G^{2}_{<_{1},<_{3}}$ on $V(G)$  such that  $E(G_{<_{1},<_{2},<_{3}})=E(G^{1}_{<_{1},<_{2}})\cap E(G^{2}_{<_{1},<_{3}})$. An {\em unordered graph} $G$ is said to be {\em double-magical}, if there exist three total orders $<_{1},<_{2},<_{3}$ on $V(G)$ such that the triple-ordered graph $G_{<_{1},<_{2},<_{3}}$ is double-magical. We say that $G$ is {\em witnessed} by $(<_{1},<_{2},<_{3})$.
\end{definition}

By Lemmas~\ref{magical1} and~\ref{magical2}, it is not hard to characterize disjointness graphs of $x$-monotone curves intersected by a vertical line.

\begin{lemma}\label{dmagical}
	Let $\mathcal{C}$ be a collection of $x$-monotone curves such that each member of $\mathcal{C}$ intersects the vertical line $l$. Then the disjointness graph of $\mathcal{C}$ is double-magical.
\end{lemma}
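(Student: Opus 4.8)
The plan is to split each $x$-monotone curve that crosses the vertical line $l$ into its two halves at the intersection point with $l$, and to recognize that each half is (after a suitable affine transformation) a grounded $x$-monotone curve grounded at $l$. Concretely, for $\gamma \in \mathcal{C}$, write $\gamma = \gamma^{-} \cup \gamma^{+}$, where $\gamma^{-}$ is the portion of $\gamma$ to the left of (or on) $l$ and $\gamma^{+}$ is the portion to the right of (or on) $l$. Reflecting the left half about $l$, the family $\mathcal{C}^{-} = \{\gamma^{-} : \gamma \in \mathcal{C}\}$ becomes a family of grounded $x$-monotone curves (grounded at $l$), and $\mathcal{C}^{+} = \{\gamma^{+} : \gamma \in \mathcal{C}\}$ is already such a family. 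Two curves $\gamma, \delta \in \mathcal{C}$ are disjoint if and only if $\gamma^{-} \cap \delta^{-} = \emptyset$ \emph{and} $\gamma^{+} \cap \delta^{+} = \emptyset$ (their only possible common point on $l$ is covered by both halves, so this equivalence is clean once we perturb the curves so that no two of them meet $l$ at the same height). Hence the disjointness graph $G$ of $\mathcal{C}$ is the edge-intersection of the disjointness graph $G^{-}$ of $\mathcal{C}^{-}$ and the disjointness graph $G^{+}$ of $\mathcal{C}^{+}$, both on the common vertex set $V(G) = \mathcal{C}$.

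Next I would invoke Lemma~\ref{magical1}: each of $G^{-}$ and $G^{+}$ is a magical graph, witnessed by orderings derived from the geometry. The subtle point is that the ordering $<_{1}$ must be the \emph{same} for both. Looking at the proof of Lemma~\ref{magical1}, the first ordering $<_{1}$ records the height of the grounding endpoint — and for both halves $\gamma^{-}$ and $\gamma^{+}$, this grounding endpoint is exactly the point $\gamma \cap l$. So if we let $<_{1}$ order the curves by their height on $l$, then $<_{1}$ simultaneously witnesses the magical structure of $G^{-}$ together with a second ordering $<_{2}$ (heights of left endpoints of the $\gamma^{-}$, i.e.\ left endpoints of the original curves after reflection), and the magical structure of $G^{+}$ together with a third ordering $<_{3}$ (heights of right endpoints of the original curves). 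Thus $G^{-} = G^{1}_{<_{1},<_{2}}$ and $G^{+} = G^{2}_{<_{1},<_{3}}$ are magical graphs on $V(G)$ sharing the first order, and $E(G) = E(G^{-}) \cap E(G^{+}) = E(G^{1}_{<_{1},<_{2}}) \cap E(G^{2}_{<_{1},<_{3}})$. By Definition~\ref{2magic} this says exactly that $G_{<_{1},<_{2},<_{3}}$ is double-magical, hence $G$ is double-magical.

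The main obstacle — and the only place requiring genuine care — is making the reduction to grounded curves fully rigorous: one must check that reflecting the left halves about $l$ genuinely produces grounded $x$-monotone curves contained in a half-plane whose boundary is $l$ (it does, since $x$-monotonicity is preserved under reflection across a vertical line and the reflected left endpoint becomes the rightmost point of $\gamma^{-}$'s reflection), and that the disjointness-equivalence $\gamma \cap \delta = \emptyset \iff (\gamma^{-}\cap\delta^{-} = \emptyset \text{ and } \gamma^{+}\cap\delta^{+} = \emptyset)$ holds. This last equivalence can fail only through a shared point on $l$, which we eliminate by a generic vertical perturbation of the curves (as in the proofs of Lemmas~\ref{semiposet0} and~\ref{magical1}) so that no two curves meet $l$ at the same point; after this perturbation the disjointness graph is unchanged and the equivalence is immediate. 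Everything else is a direct citation of Lemma~\ref{magical1} applied twice, with the bookkeeping that the common ordering $<_{1}$ is forced by the common grounding line $l$. Note that for the subsequent application (deriving Theorem~\ref{upperbound2}) one will also want the converse — that every double-magical graph is realizable — which will follow analogously from Lemma~\ref{magical2}, but that is not needed for the present statement.
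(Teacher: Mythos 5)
Your proposal is correct and follows the paper's proof essentially verbatim: split each curve at $l$ into $\gamma^-$ and $\gamma^+$, note that $E(G)=E(G^-)\cap E(G^+)$ for the disjointness graphs of the two resulting grounded families, and invoke Lemma~\ref{magical1} twice with the shared first ordering $<_1$ given by the crossing height on $l$. One small caveat on phrasing: the second and third orderings should compare the \emph{horizontal reach} of the far endpoints (the $x$-coordinates $x^-_\gamma$ and $x^+_\gamma$ in the paper's notation, i.e.\ how far each half extends away from $l$), which is exactly what the proof of Lemma~\ref{magical1} uses and needs; ``heights'' of those endpoints, if read as $y$-coordinates, would not give a magical witness.
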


\begin{proof}
	Without loss of generality, let $l=\{x=0\}$. For each $\gamma\in\mathcal{C}$, let $(-x^{-}_{\gamma},y^{-}_{\gamma})$ be the left endpoint of $\gamma$, let $(0,y_{\gamma})$ be the intersection point of $\gamma$ and $l$, and let $(x^{+}_{\gamma},y^{+}_{\gamma})$ be the right endpoint of $\gamma$. Also, let $\gamma^{-}=\gamma\cap \{x\leq 0\}$ and $\gamma^{+}=\gamma\cap \{x\geq 0\}$, and let $\mathcal{C}^{-}=\{\gamma^{-}:\gamma\in\mathcal{C}\}$ and $\mathcal{C}^{+}=\{\gamma^{+}:\gamma\in\mathcal{C}\}$. Then $\mathcal{C}^{+}$ is a collection of grounded curves, and $\mathcal{C}^{-}$ is the reflection of a collection of grounded curves to the line $l$.
	
	  Let $G$, $G^{-}$ and $G^{+}$ be the disjointness graphs of $\mathcal{C}$, $\mathcal{C}^{-}$ and $\mathcal{C}^{+}$, respectively, such that we identify $\gamma,\gamma^{-}$ and $\gamma^{+}$ as the vertices of these graphs for every $\gamma\in\mathcal{C}$. Then $E(G)=E(G^{-})\cap E(G^{+})$.  Let $<_{1}$ be the total ordering on $\mathcal{C}$ defined by $\gamma<_{1}\gamma'$ if $y_{\gamma}<y_{\gamma'}$, let $<_{2}$ be the ordering defined by $\gamma<_{2}\gamma'$ if $x^{-}_{\gamma}<x^{-}_{\gamma'}$, and let $<_{3}$ be the ordering defined by $\gamma<_{3}\gamma'$ if $x^{+}_{\gamma}<x^{+}_{\gamma'}$.
	By Lemma \ref{magical1}, $G^{-}_{<_{1},<_{2}}$ and $G^{+}_{<_{1},<_{3}}$ are magical, so $G_{<_{1},<_{2},<_{3}}$ is double-magical.
\end{proof}

We can just as easily prove the converse of Lemma~\ref{dmagical}, using Lemma \ref{magical2}.

\begin{lemma}\label{dmagical2}
	Let $G$ be a double-magical graph. Then there exists a collection of curves $\mathcal{C}$ such that each member of $\mathcal{C}$ has a nonempty intersection with the vertical line $\{x=0\}$, and the disjointness graph of $\mathcal{C}$ is isomorphic to $G$.
\end{lemma}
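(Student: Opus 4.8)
The plan is to prove Lemma~\ref{dmagical2} by reverse-engineering the construction of Lemma~\ref{magical2}, carefully coordinating the two magical witnesses along a common $<_1$-order. Let $G$ be double-magical, witnessed by $(<_1,<_2,<_3)$, so that there are magical graphs $G^1_{<_1,<_2}$ and $G^2_{<_1,<_3}$ with $E(G)=E(G^1)\cap E(G^2)$. By Lemma~\ref{magical2}, applied to $G^2_{<_1,<_3}$, there is a family $\mathcal{C}^+$ of grounded $x$-monotone curves (lying in $\{x\ge 0\}$, grounded on $\{x=0\}$) whose disjointness graph is isomorphic to $G^2$; and by applying Lemma~\ref{magical2} to $G^1_{<_1,<_2}$ and then reflecting across the $y$-axis, there is a family $\mathcal{C}^-$ of curves in $\{x\le 0\}$ grounded on $\{x=0\}$ whose disjointness graph is isomorphic to $G^1$. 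The key point is that both constructions use the \emph{same} first ordering $<_1$, and in the proof of Lemma~\ref{magical2} the $y$-coordinate of every curve $C_v$ is exactly $y(v)$, the position of $v$ in $<_1$. Hence the left endpoint of the grounded curve for $v$ in $\mathcal{C}^+$ and the right endpoint (after reflection, the left endpoint of the unreflected curve) of the curve for $v$ in $\mathcal{C}^-$ both sit at height $y(v)$ on the $y$-axis, at the point $(0,y(v))$.

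First I would, for each vertex $v$, concatenate the reflected piece $\gamma^-_v\subset\{x\le 0\}$ from $\mathcal{C}^-$ with the piece $\gamma^+_v\subset\{x\ge 0\}$ from $\mathcal{C}^+$ along the common endpoint $(0,y(v))$, obtaining a single $x$-monotone curve $\gamma_v$ that crosses $\{x=0\}$ exactly once, at $(0,y(v))$. Set $\mathcal{C}=\{\gamma_v:v\in V(G)\}$. Since the $y(v)$ are distinct, the curves meet $l=\{x=0\}$ in distinct points, so in particular the concatenation is well-defined and each $\gamma_v$ is genuinely $x$-monotone. Then I would verify: for distinct $u,v$, $\gamma_u\cap\gamma_v\ne\emptyset$ if and only if $\gamma^-_u\cap\gamma^-_v\ne\emptyset$ or $\gamma^+_u\cap\gamma^+_v\ne\emptyset$ --- this holds because the two halves live in complementary closed half-planes meeting only along $l$, and the $y$-coordinates on $l$ are all distinct so no two curves can cross $l$ at a shared point. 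Consequently $uv\in E(\text{disjointness graph of }\mathcal{C})$ iff $\gamma_u,\gamma_v$ are disjoint iff $\gamma^-_u,\gamma^-_v$ are disjoint \emph{and} $\gamma^+_u,\gamma^+_v$ are disjoint, i.e. iff $uv\in E(G^1)\cap E(G^2)=E(G)$. This gives the desired isomorphism.

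The one technical wrinkle --- and the step I expect to need the most care --- is making sure the two halves can actually be glued without forcing a spurious intersection or destroying $x$-monotonicity: the construction in Lemma~\ref{magical2} places $C_v$ in the strip $[0,x(v)]\times\{\text{near }y(v)\}$, and all the vertical deviations are small (of order $1/10$) and confined to the interior of that strip, so on the line $\{x=0\}$ itself every curve $C_v$ is exactly at height $y(v)$ with no deviation; after reflecting $\mathcal{C}^-$ and translating nothing, the left halves occupy $[-N_1,0]$ and the right halves occupy $[0,N_2]$ for suitable widths, overlapping only on $\{x=0\}$, where they agree pointwise for each $v$. Thus the glued curve is continuous and $x$-monotone, and no curve of $\mathcal{C}^-$ can intersect a curve of $\mathcal{C}^+$ except possibly on $l$, which is ruled out by distinctness of the heights $y(v)$. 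I would also remark that the choice of $l=\{x=0\}$ is without loss of generality by a horizontal translation, matching the phrasing of the lemma. With these checks in place the proof is complete; one may note it is the exact mirror of the "forward" direction argument in Lemma~\ref{dmagical}, where the disjointness graph of $\mathcal{C}$ was written as $E(G^-)\cap E(G^+)$.
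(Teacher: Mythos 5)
Your proof is correct and follows essentially the same approach as the paper: apply Lemma~\ref{magical2} twice to the two magical witnesses sharing the common order $<_1$, reflect one family into $\{x\le 0\}$, glue each pair of halves at the common ground point $(0,y(v))$, and observe that disjointness of the glued curves is equivalent to disjointness of both halves. One minor inaccuracy worth fixing: in the construction of Lemma~\ref{magical2} the vertical deviations of $C_v(i)$ are \emph{not} of order $1/10$ (the curve dips to height roughly $y(u)$, which may differ from $y(v)$ by as much as $n$); what actually matters, and what you correctly use, is only that each half-curve meets the line $\{x=0\}$ solely at its endpoint $(0,y(v))$, so the two half-planes overlap only at the grounding points, which are distinct for distinct vertices.
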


\begin{proof}
	Let $(<_{1},<_{2},<_{3})$ be total orders on $V(G)$ witnessing that $G$ is double-magical, and let $G^{1}_{<_{1},<_{2}},G^{2}_{<_{1},<_{3}}$ be two magical graphs on $V(G)$ such that $E(G)=E(G^{1})\cap E(G^{2})$.
	
	Let $|V(G)|=n$. By Lemma \ref{magical2}, there exist $n$ grounded $x$-monotone curves $\gamma_{1}^{+},\dots,\gamma_{n}^{+}$ such that $\gamma_{i}^{+}$ is contained in the nonnegative plane $\{x\geq 0\}$ with one endpoint at $(0,i)$, the disjointness graph of $\{\gamma_{1}^{+},\dots,\gamma_{n}^{+}\}$ is $G^{1}$, and $\gamma_{i}^{+}$ corresponds to the $i$-th vertex of $G^{1}$ in the order $<_{1}$. Also, there exist $n$ $x$-monotone curves $\gamma_{1}^{-},\dots,\gamma_{n}^{-}$ such that $\gamma_{i}^{-}$ is contained in the nonpositive plane $\{x\leq 0\}$ with one endpoint at $(0,i)$, the disjointness graph of $\{\gamma_{1}^{-},\dots,\gamma_{n}^{-}\}$ is $G^{2}$, and $\gamma_{i}^{-}$ corresponds to the $i$-th vertex of $G^{2}$ in the order $<_{1}$. For $i=1,\dots,n$, set $\gamma_{i}=\gamma_{i}^{-}\cup\gamma_{i}^{+}$, then the disjointness graph of $\mathcal{C}=\{\gamma_{i}:i\in [n]\}$ is isomorphic to $G$, and every curve in $G$ has a nonempty intersection with the vertical line $\{x=0\}$.
\end{proof}

For any double-magical graph $G=G_{<_{1},<_{2},<_{3}}$, define four partial orders $\prec_{1},\prec_{2},\prec_{3},\prec_{4}$ on $V(G)$, as follows.
For $a,b\in V(G)$, let

(i) \;\,$a\prec_{1}b$\;\;\;\, if $a<_{1}b$,\, $a<_{2}b$,\, $a<_{3}b$,\;\; and\; $ab\in E(G)$;

(ii) \;$a\prec_{2}b$\;\;\; if $a<_{1}b$,\; $b<_{2}a$,\; $b<_{3}a$,\;\; and\; $ab\in E(G)$;

(iii) $a\prec_{3}b$\;\;\; if $a<_{1}b$,\; $a<_{2}b$,\; $b<_{3}a$,\;\; and\; $ab\in E(G)$;

(iv) $a\prec_{4}b$\;\;\; if $a<_{1}b$,\; $b<_{2}a$,\; $a<_{3}b$,\;\; and\; $ab\in E(G)$.
\smallskip

It follows easily from the definition of double-magical graphs that these are indeed partial orders.
Moreover, they satisfy the following conditions.

(1) If $ab\in E(G)$, then $a$ and $b$ are comparable by precisely one of these $4$ partial orders.

(2) For any $a,b,c\in V(G)$ and $i\in [4]$, if $a\prec_{1}b$ and $b\prec_{i}c$, then $ac\in E(G)$.

(3) For any $a,b,c\in V(G)$ and $i\in [4]$, if $a\prec_{i}b$ and $b\prec_{2}c$, then $ac\in E(G)$.

\begin{theorem}\label{dmagicalupperbound}
	Let $G$ be a double-magical graph. If $\omega(G)=k$, then $\chi(G)\leq \frac{k+1}{2}\binom{k+2}{3}$.
\end{theorem}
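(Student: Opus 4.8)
The plan is to mimic the proof of Theorem~\ref{upperbound} (via Lemma~\ref{semiposet}), but now working with the four partial orders $\prec_1,\prec_2,\prec_3,\prec_4$ instead of a single one. The key structural fact to exploit is that, by properties (1)--(3), every edge of $G$ is ``labelled'' by exactly one $\prec_i$, and chains in these orders are cliques, so each $\prec_i$ has chains of length at most $k$. The idea is to partition $V(G)$ into few classes so that, restricted to each class, the graph $G$ becomes a comparability graph (a single partial order), and then apply Dilworth's theorem on each class.

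First I would imitate Lemma~\ref{semiposet}: for a vertex $v$, consider parameters recording the sizes of the largest cliques of each ``type'' having $v$ as an extreme element. Concretely, since $\prec_1$-chains and $\prec_2$-chains behave like the ``below/above'' orders and $\prec_3,\prec_4$ like containment orders, one natural choice is to let $g_1(v)$ be the size of the largest clique in which $v$ is the $\prec_1\!\cup\!\prec_4$-minimum (i.e.\ $v$ is the $<_1$-smallest vertex of the clique), and $g_2(v)$ the size of the largest clique in which $v$ is the $<_1$-largest vertex; more refined indices may be needed to separate out the $\prec_3$ versus $\prec_4$ behaviour. Each such parameter lies in $[k]$, and the product of the ranges gives a bounded number of classes $V_{\vec\imath}$; the main work is to show that on each class $G[V_{\vec\imath}]$ is a comparability graph, using properties (2) and (3) (``if $a\prec_1 b$ and $b\prec_i c$ then $ac\in E$'', and dually for $\prec_2$) exactly as the $b,c\in V_i$ argument in Lemma~\ref{semiposet} was used. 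Having done that, Dilworth gives $\chi(G[V_{\vec\imath}]) \le k$ and summing with the right weights yields the bound.

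The arithmetic target $\frac{k+1}{2}\binom{k+2}{3}$ strongly suggests the bookkeeping should not be a crude product $k^4$ but a more careful double sum. I expect the right statement is: partition $V(G)$ according to a pair $(i,j)$ with $1\le i\le j\le k$ (coming from the two $<_1$-extremal clique sizes), on each block get a further decomposition into comparability pieces, and end up summing something like $\sum_{i\le j} (\text{something} \le i j)$ or $\sum_{i}\sum_{j\le i} j\cdot(\text{const})$, which telescopes to $\frac{k+1}{2}\binom{k+2}{3}=\sum_{i=1}^{k} i\binom{i+1}{2}$ (this is the identity $\sum_{i=1}^k i\binom{i+1}{2} = \frac{k+1}{2}\binom{k+2}{3}$, which I would verify). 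So the plan is: (a) define the clique-size indices carefully so that the number of blocks with a given pair of $<_1$-extremal values is controlled, (b) show each block, after removing the $\prec_3/\prec_4$ ambiguity, splits into at most $i$ (resp.\ $j$) comparability graphs by a Lemma~\ref{semiposet}-style argument, (c) apply Dilworth on each, (d) sum.

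The hard part will be step (b): making the ``$G$ restricted to a block is a comparability graph'' argument work with \emph{four} partial orders rather than one. In Lemma~\ref{semiposet} the obstruction to transitivity of a single $\prec_i$ on a block was resolved by promoting a clique; here, when $a\prec_i b$ and $b\prec_j c$ with $i\ne j$, the pair $\{a,c\}$ need not even be an edge, so I cannot hope for a single partial order on a block without first ensuring the relevant index pairs are ``compatible'' (i.e.\ satisfy the hypotheses of (2) or (3)). Getting the indexing fine enough that within a block only compatible pairs occur --- while keeping the number of blocks small enough to hit $\frac{k+1}{2}\binom{k+2}{3}$ rather than something larger --- is the delicate point, and is presumably where the asymmetric roles of $\prec_1$ (always a valid left-multiplier) and $\prec_2$ (always a valid right-multiplier), versus $\prec_3,\prec_4$ (neither), must be used.
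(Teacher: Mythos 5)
Your high-level intuition is on the right track --- stratify by extremal clique/chain statistics tied to $\prec_1$ and $\prec_2$, exploiting their asymmetric role via properties (2) and (3), and then handle the residual $\prec_3,\prec_4$ structure inside each stratum --- but there are two concrete problems with the plan, one of which is fatal.

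First, the arithmetic identity you propose, $\sum_{i=1}^k i\binom{i+1}{2}=\frac{k+1}{2}\binom{k+2}{3}$, is false (for $k=2$ the left side is $7$, the right is $6$). The correct identity behind the bound is $\sum_{h=1}^k\sum_{m=1}^h m^2=\sum_{h=1}^k\frac{h(h+1)(2h+1)}{6}=\frac{k+1}{2}\binom{k+2}{3}$, and the inner summand being $m^2$ rather than $m$ is the crux of the whole proof.

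Second, and more seriously, your step (b) --- reducing each block to a \emph{single} comparability graph and applying Dilworth --- cannot work. If it did, a block $S_{h,m}$ with clique number $m$ would be colorable with $m$ colors, giving an overall bound of $\sum_{h=1}^k\sum_{m=1}^h m=\binom{k+2}{3}$, which is strictly smaller than $\frac{k+1}{2}\binom{k+2}{3}$; this would contradict the matching lower bound of Theorem~\ref{construction2}. The correct final step is not Dilworth: after stratifying twice --- first by the size of the longest $\prec_1$-chain ending at $v$ (yielding $\prec_1$-antichains $S_1,\dots,S_k$, where $\omega(G[S_h])\le h$ follows from property (2)), then inside each $S_h$ by the size of the longest $\prec_2$-chain \emph{in $S_h$} starting at $v$ (yielding $\prec_1$- and $\prec_2$-antichains $S_{h,1},\dots,S_{h,h}$, where $\omega(G[S_{h,m}])\le m$ follows from property (3)) --- the remaining edges inside $S_{h,m}$ are all of type $\prec_3$ or $\prec_4$. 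These two partial orders need \emph{not} be realizable as a single comparability graph on $S_{h,m}$, and one instead colors $S_{h,m}$ by the pair $\phi(v)=(r,q)\in[m]^2$, where $r$ and $q$ are the ranks (longest descending chain lengths) of $v$ in $\prec_3$ and $\prec_4$ respectively restricted to $S_{h,m}$. This is a proper coloring with $m^2$ colors because every edge in $S_{h,m}$ is a $\prec_3$- or $\prec_4$-comparable pair, so the corresponding coordinate of $\phi$ strictly changes. So the ingredient you were missing is not a cleverer way to force a single partial order per block, but to accept that $\prec_3$ and $\prec_4$ are genuinely two independent orders and pay for this with a product coloring on $[m]^2$ --- which is exactly what the quadratic inner term records.
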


\begin{proof}
Let $<_{1},<_{2},<_{3}$ be total orders on $V(G)$ witnessing $G$, and let $\prec_{1},\prec_{2},\prec_{3},\prec_{4}$ denote the partial orders defined above. Clearly, there is no chain of length $k+1$ with respect to any of the partial orders $\prec_{i}$, because that would contradict the assumption $\omega(G)=k$.
	
For $h=1,\dots,k$, let $S_{h}$ denote the set of vertices $v\in V(G)$ for which the size of a longest $\prec_{1}$-chain with maximal element $v$ is $k-h+1$. Then the sets $S_{1},\dots, S_{k}$ form a partition of $V(G)$, where each $S_h$ is a $\prec_{1}$-antichain that contains no clique of size $h+1$. Indeed, suppose that $C\subset S_{h}$ induces a clique of size $h+1$ in $G$, and consider the smallest vertex $v\in C$ with respect to the order $<_{1}$. There exists a $\prec_{1}$-chain $D$ of size $k-h+1$ ending at $v$. This implies that for every $a\in D$ and $b\in C$, we have $a\prec_{1}v$ and $v\prec_{i}b$ for some $i\in\{2,3,4\}$. Then, by (2), we would have $ab\in E(G)$. Hence, $D\cup C$ would induce a clique of size $k+1$, contradiction.

For $h=1,\dots,k$ and $m=1,...,h$, let $S_{h,m}$ denote the set of vertices in $S_{h}$ for which the largest $\prec_{2}$-chain in $S_{h}$ with smallest element $v$ has size $h-m+1$. As $\omega(G[S_{h}])\leq h$, the sets $S_{h,1},\dots,S_{h,h}$ are $\prec_{1}$- and $\prec_{2}$-antichains partitioning $S_{l}$. Further, $S_{h,m}$ contains no clique of size $m+1$. Otherwise, if $C\subset S_{h,m}$ forms a clique of size $m+1$ in $G$, then consider the largest vertex $v\in C$ with respect to the order $<_{1}$. There exists a $\prec_{2}$-chain $D$ of size $h-m+1$ whose smallest element is $v$. Hence, for every $a\in C$ and $b\in D$, we have $a\prec_{i}v$ and $v\prec_{2}b$ for some $i\in\{3,4\}$, which implies, by (3), that $ab\in E(G)$. Hence, $C\cup D$ would induce a clique of size $h+1$ in $S_{h}$, contradiction.

Thus, we obtained that  $S_{h,m}$ is a $\prec_{1}$- and $\prec_{2}$-antichain, which does not contain a clique of size $m+1$. In particular, the size of the longest $\prec_{3}$- and $\prec_{4}$-chains in $S_{l,m}$ is at most $m$. This means that $G[S_{h,m}]$ can be properly colored with $m^{2}$ colors. Indeed, set the color of $v\in S_{h,m}$ to be $\phi(v)=(r,q)$, where $r$ is the size of the largest $\prec_{3}$-chain with smallest element $v$, and  $q$ is the size of the largest $\prec_{4}$-chain with smallest element $v$. Then $\phi:S_{h,m}\rightarrow [m]^{2}$ is a proper coloring of $G[S_{h,m}]$.

  As $S_{h}=\bigcup_{m=1}^{h}S_{h,m}$, we have
   $$\chi(G[S_{h}])\leq \sum_{m=1}^{h}\chi(G[S_{h,m}])\leq \sum_{m=1}^{h}m^{2}=\frac{h(h+1)(2h+1)}{6}.$$
   Finally, since $V(G)=\bigcup_{h=1}^{k}S_{h}$, we obtain

   $$\chi(G)\leq \sum_{h=1}^{k}\chi(G[S_{h}])\leq\sum_{h=1}^{k}\frac{h(h+1)(2h+1)}{6}=\frac{k+1}{2}\binom{k+2}{3}.$$
\end{proof}

\section{Construction of double-magical graphs--Proof of Theorem~\ref{construction2}}\label{vertical:lower}
  In view of \ref{dmagical2}, to prove Theorem~\ref{construction2}, it is enough to construct a double-magical graph with the desired clique and chromatic numbers.

\begin{theorem}\label{maindmagical}
	For every positive integer $k\geq 2$, there exists a double-magical graph $G$ satisfying $\omega(G)=k$ and $\chi(G)=\frac{k+1}{2}\binom{k+2}{3}$.
\end{theorem}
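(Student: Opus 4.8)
The plan is to mirror the probabilistic construction used for Theorem~\ref{mainmagical} (the ``magical graph'' case), but now working with the four partial orders $\prec_1,\prec_2,\prec_3,\prec_4$ attached to a triple-ordered graph, and with the index set being the ``admissible'' quadruples of chain-length parameters rather than the pairs $(a,b)\in S$. Concretely, I would take as index set a collection $T$ of tuples recording, for a hypothetical clique, the lengths of its longest $\prec_1$-, $\prec_2$-, $\prec_3$-, $\prec_4$-chains through each vertex; the analysis in Theorem~\ref{dmagicalupperbound} shows that a clique of size $k$ partitions into the strata $S_h$, then $S_{h,m}$, then the two further coordinates from $\prec_3,\prec_4$, so the natural index set has size $\sum_{h=1}^k\sum_{m=1}^h m^2=\frac{k+1}{2}\binom{k+2}{3}=:N$. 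I would place $n$ points in a tiny box for each index in $T$, choose $<_1,<_2,<_3$ as generic coordinate orders designed so that the partial-order structure between boxes is exactly the one dictated by the stratification (this is the analogue of placing $A_{a,b}$ in the square $[ak+b,ak+b+1]\times[bk+a,bk+a+1]$, only now one needs three coordinate axes, with the third axis encoding the $\prec_3/\prec_4$ split), and then put down a random graph $G_0$ on the available pairs with edge probability $p=t/n$ for suitable $t,h,n$ depending on $k$, exactly as before.

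The argument then has three parallel parts. First, a chromatic lower bound: choosing parameters so that the minimal double-magical graph $G'$ containing $G_0$ has no independent set of size $(1+\lambda)n$ forces $\chi(G)\ge N$, because $|V(G)|\approx Nn$ and an $(N-1)$-coloring would produce too large an independent set. This is essentially verbatim Claim~\ref{independent}. Second, I would need a ``clean-up'' step as in Claims~\ref{holes}: identify the finitely many forbidden configurations whose presence would create an unwanted clique (here, triples/tuples of boxes that, if all mutually joined, would witness a clique of size $k+1$ — the double-magical analogue of a ``hole''), show the expected number of such configurations realized as cliques in $G'$ is small (a union bound over mountain-path unions, using $|E(P)|\ge|V(P)|$ for the relevant connected non-tree subgraphs inside each of the two magical ``layers'' $G^1$ and $G^2$), and delete one vertex per bad configuration. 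Third, the combinatorial heart: a purely combinatorial lemma, analogous to Claim~\ref{matrix}, stating that a subset of the index set $T$ avoiding the forbidden patterns has size at most $k$; combined with the fact that a clique meets each box at most once, this gives $\omega(G)\le k$, and the reverse inequality $\omega(G)\ge k$ follows from $\chi(G)>\binom k2$ via Theorem~\ref{dmagicalupperbound} (or directly from Lemma~\ref{semiposet}-type bounds applied to the layers).

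The step I expect to be the main obstacle is getting the geometry of the three orders $<_1,<_2,<_3$ and the placement of the boxes exactly right so that (a) any mountain-path in either layer $G^1_{<_1,<_2}$ or $G^2_{<_1,<_3}$ visits each box at most once — this is what makes the union-of-paths graph $P$ non-tree and forces $|E(P)|\ge|V(P)|$, which is the engine of the hole-count bound — and (b) the induced between-box relations are precisely those encoded by the stratification in Theorem~\ref{dmagicalupperbound}, so that the forbidden-pattern lemma really does cap admissible index sets at size $k$. In the single-magical case this was delicate but manageable because one only had to juggle the pair $(a,b)$ with the two axes; with four partial orders and three axes one must simultaneously encode the $\prec_1$-stratum $h$, the $\prec_2$-substratum $m$, and then a two-dimensional $\prec_3,\prec_4$-refinement into a consistent $3$-coordinate geometric layout. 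I would handle this by first proving an abstract combinatorial version — define $T$ and the forbidden patterns directly from the proof of Theorem~\ref{dmagicalupperbound}, prove $|K|\le k$ for pattern-avoiding $K\subseteq T$ by induction on $k$ (stripping off one ``column'' of the stratification at a time, as in Claim~\ref{matrix}) — and only afterwards realize $T$ by explicit small boxes, checking the mountain-path-visits-each-box-once property box pair by box pair. Once these ingredients are in place, the assembly (Markov's inequality on the hole count, vertex deletion, the two inequalities on $\omega$ and $\chi$) is identical to the end of Section~\ref{grounded:lower}.
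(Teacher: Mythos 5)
Your high-level plan matches the paper's proof almost step for step: the same probabilistic skeleton, the same independent-set claim, the same hole-counting/deletion clean-up using $|E(P)|\ge|V(P)|$ for the union of mountain-paths, and the same strategy of proving a purely combinatorial ``pattern-avoiding subsets of the index set have size at most $k$'' lemma by induction (the analogue of Claim~\ref{matrix}). You also correctly identify the target index set size $\sum_{h=1}^k\sum_{m=1}^h m^2=\frac{k+1}{2}\binom{k+2}{3}$.

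However, the step you flag as ``the main obstacle'' is precisely where the paper introduces its new idea, and your proposal leaves it unresolved, so there is a genuine gap. The paper realizes the index set by attaching to each $\mathbf{i}\in[k]^4$ with $\mathbf{i}(1)+\mathbf{i}(2)\le k+1$, $\mathbf{i}(2)\ge\mathbf{i}(3)$, $\mathbf{i}(2)\ge\mathbf{i}(4)$ a tiny cluster around the point $P(\mathbf{i})=k^3\mathbf{i}(1)\mathbf{v}_1+k^2\mathbf{i}(2)\mathbf{v}_2+k\mathbf{i}(3)\mathbf{v}_3+\mathbf{i}(4)\mathbf{v}_4\in\mathbb{R}^3$, where $\mathbf{v}_1=(1,1,1)$, $\mathbf{v}_2=(1,-1,-1)$, $\mathbf{v}_3=(1,1,-1)$, $\mathbf{v}_4=(1,-1,1)$ encode the four partial orders $\prec_1,\dots,\prec_4$ via sign patterns. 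The exponentially decaying weights yield the ``LEX property'': $\sg(P(\mathbf{i})-P(\mathbf{i}'))$ depends only on the first coordinate where $\mathbf{i},\mathbf{i}'$ differ. This single device both makes the between-box relations match the stratification of Theorem~\ref{dmagicalupperbound} and drives the induction in Claim~\ref{hole_upperbound}; without something equivalent, the ``realize $T$ by explicit small boxes'' step and the $|K|\le k$ lemma do not close. A secondary imprecision: the forbidden pattern is not ``tuples of boxes that, if all mutually joined, would witness a clique of size $k+1$''; a hole is simply a triple $(u,v,w)$ with $u<_1 v<_1 w$ and $v<_2 u,w$ or $v<_3 u,w$, i.e.\ a triple where the double-magical closure fails to force $uw$ from $uv,vw$. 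Deleting a vertex from each hole-triangle makes every remaining clique hole-free, which is what the combinatorial lemma needs.
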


In the rest of this section, we prove this theorem. The proof of Lemma \ref{dmagicalupperbound} reveals a lot about the structure of double-magical graphs satisfying the properties of Theorem \ref{maindmagical}, if they exist. To construct them, we use reverse engineering.

For any vector $\mathbf{v}\in \mathbb{R}^{d}$  and any $j\in[d]$, let $\mathbf{v}(j)$ denote the $j$th coordinate of $\mathbf{v}$. The \emph{sign vector} of $\mathbf{v}\in \mathbb{R}^{d}$ is the $d$-dimensional vector $\sg(\mathbf{v})$ with $$\sg(\mathbf{v})(i)=\begin{cases}
1 &\mbox{ if } \mathbf{v}(i)>0,\\
-1 &\mbox{ if } \mathbf{v}(i)<0,\\
0 &\mbox{ if } \mathbf{v}(i)=0.
\end{cases}$$

Let $\mathbf{v}_{1}=(1,1,1)$, $\mathbf{v}_{2}=(1,-1,-1)$, $\mathbf{v}_{3}=(1,1,-1)$ and $\mathbf{v}_{4}=(1,-1,1)$. For any $\mathbf{i}\in [k]^{4}$, let
$$P(\mathbf{i})=k^{3}\mathbf{i}(1)\mathbf{v}_{1}+k^{2}\mathbf{i}(2)\mathbf{v}_{2}+k\mathbf{i}(3)\mathbf{v}_{3}+\mathbf{i}(4)\mathbf{v}_{4}.$$
These $k^4$ points have the useful property that if $\mathbf{i}\neq\mathbf{i}'$, then the relative position of $P(\mathbf{i})$ and $P(\mathbf{i}')$ depends only on the smallest coordinate in which $\mathbf{i}$ and $\mathbf{i}'$ differ. We refer to this property as the \emph{LEX property} (short for ``lexicographic''), which is formally defined as follows.

\bigskip
\noindent
\textbf{LEX property}: Let $\mathbf{i},\mathbf{i}'\in [k]^{4}$ such that $\mathbf{i}\neq\mathbf{i}'$, and let $r$ be the smallest index such that $\mathbf{i}(r)\neq \mathbf{i}'(r)$. If $\mathbf{i}(r)>\mathbf{i}'(r)$, then
$$\sg(P(\mathbf{i})-P(\mathbf{i}'))=\mathbf{v}_{r}.$$

\bigskip

Let $$S=\{\mathbf{i}\in [k]^{4}\, :\, \mathbf{i}(1)+\mathbf{i}(2)\leq k+1,\; \mathbf{i}(2)\geq \mathbf{i}(3), \mbox{ and }\mathbf{i}(2)\geq \mathbf{i}(4)\},$$ so that we have
   $$|S|=\sum_{i=1}^{k}(k+1-i)i^2=\frac{k+1}{2}\binom{k+2}{3}.$$

 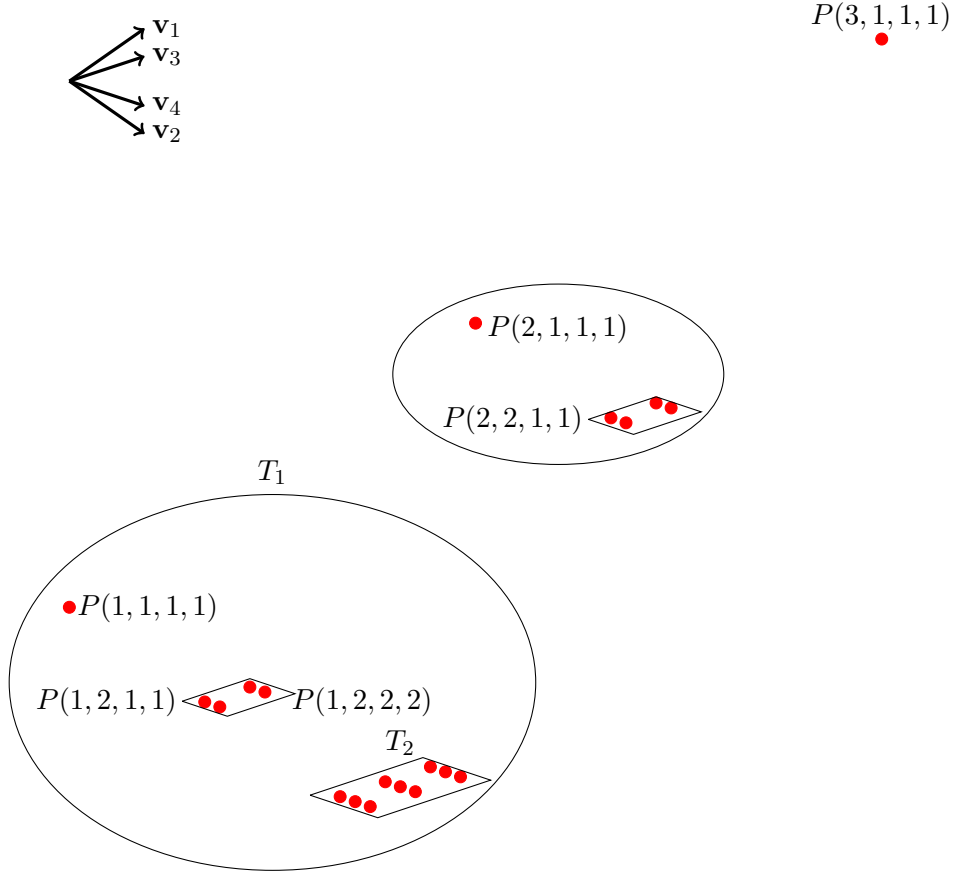
\begin{figure}[t]
 	\begin{center}
 		\pgfmathsetmacro{\N}{2}
 		\pgfmathsetmacro{\M}{3}
 		\pgfmathsetmacro{\xaa}{1}
 		\pgfmathsetmacro{\xab}{0.7}
 		\pgfmathsetmacro{\xba}{1}
 		\pgfmathsetmacro{\xbb}{-0.7}
 		\pgfmathsetmacro{\xca}{1}
 		\pgfmathsetmacro{\xcb}{0.33}
 		\pgfmathsetmacro{\xda}{1}
 		\pgfmathsetmacro{\xdb}{-0.33}
 		\pgfmathsetmacro{\s}{0.2}
 		\pgfmathsetmacro{\xv}{0}
 		\pgfmathsetmacro{\yv}{7}
 		\begin{tikzpicture}[scale=1]
 		
 	\draw[->, very thick] (\xv,\yv) -- (\xv+\xaa,\yv+\xab) ;
 	\draw[->, very thick] (\xv,\yv) -- (\xv+\xba,\yv+\xbb) ;
 	\draw[->, very thick] (\xv,\yv) -- (\xv+\xca,\yv+\xcb) ;
 	\draw[->, very thick] (\xv,\yv) -- (\xv+\xda,\yv+\xdb) ;

 	\node[] at (\xv+\xaa+0.3,\yv+\xab) {$\mathbf{v}_{1}$} ;
 	\node[] at (\xv+\xba+0.3,\yv+\xbb) {$\mathbf{v}_{2}$} ;
 	\node[] at (\xv+\xca+0.3,\yv+\xcb) {$\mathbf{v}_{3}$} ;
 	\node[] at (\xv+\xda+0.3,\yv+\xdb) {$\mathbf{v}_{4}$} ;

 		\foreach \i in {0,...,\N}
 		{
 			\foreach \j in {0,...,\N}
 			{
 				\foreach \k in {0,...,\j}
 				{
 					\foreach \l in {0,...,\j}
 					{
 						\pgfmathtruncatemacro\result{\i+\j-1}
 						\ifthenelse{\result<\N}{\node[vertex,red,scale=1.5] at (\s*\M*\M*\M*\i*\xaa+\s*\M*\M*\j*\xba+\s*\M*\k*\xca+\s*\l*\xda,\s*\M*\M*\M*\i*\xab+\s*\M*\M*\j*\xbb+\s*\M*\k*\xcb+\s*\l*\xdb) {};}{;}			
 					%	\ifthenelse{\result>4}{\node[] at (\i+\j/4-0.2,\j+\i/4+0.35) {$A_{\i,\j}$} ;}{;}
 					}
 				}
 			}
 			
 		}
 	
 	\draw (2.7,-1) ellipse (3.5 and 2.5);
 	\node[] at (2.7,1.8) {$T_{1}$};
 %	\draw (4.4,-2.3) ellipse (1 and 0.75);
    \draw (3.2,-2.5) -- (4.1,-2.8) -- (5.6,-2.3) -- (4.7,-2) -- (3.2,-2.5) ;
 	\node[] at (4.4,-1.8) {$T_{2}$} ;
 	\draw (6.5,3.1) ellipse (2.2 and 1.2);
 	\draw (1.5,-1.25) -- (2.1,-1.45) -- (3,-1.15) -- (2.4,-0.95) -- (1.5,-1.25) ;
 	\draw (6.9,2.5) -- (7.5,2.3) -- (8.4,2.6) -- (7.8,2.8) -- (6.9,2.5) ;
 %	\draw (\s*\M*\M*\M*2*\xaa,\s*\M*\M*\M*2*\xab) circle (0.5) ;
 	
 	\node[] at (\s*\M*\M*\M*2*\xaa,\s*\M*\M*\M*2*\xab+0.3) {$P(3,1,1,1)$} ;
 	\node[] at (1.05,0) {$P(1,1,1,1)$} ;
 	\node[] at (0.5,-1.26) {$P(1,2,1,1)$} ;	
 	\node[] at (3.9,-1.26) {$P(1,2,2,2)$} ;	
 	\node[] at (6.5,3.7) {$P(2,1,1,1)$} ;
 	\node[] at (5.9,2.5) {$P(2,2,1,1)$} ;
 %	\node[] at (9.2,2.6) {$P(2,2,2,2)$} ;
 		
 		\end{tikzpicture}
 		\caption{An illustration of the points $P(\mathbf{i})$ for $\mathbf{i}\in S$, $k=3$.}
 		\label{figure5}
 	\end{center}
 \end{figure}

An ordered triple of points $(u,v,w)\in\mathbb{R}^{3}\times\mathbb{R}^{3}\times\mathbb{R}^{3}$ is called a \emph{hole} if $u(1)<v(1)<w(1)$, and either $v(2)<\min\{u(2),w(2)\}$, or $v(3)<\min\{u(3),w(3)\}$.

\begin{claim}\label{hole_upperbound}
	Let $H\subset S$. If the set $\{P(\mathbf{i}):\mathbf{i}\in H\}$ does not contain a hole, then $|H|\leq k$.
\end{claim}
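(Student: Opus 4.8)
The statement asserts that a hole-free subset $H$ of the four-dimensional "staircase" set $S$ has at most $k$ points. The plan is to translate the geometric hole condition (phrased in terms of the points $P(\mathbf{i})$) into a purely combinatorial condition on the index vectors $\mathbf{i}\in H$, using the LEX property, and then to prove the combinatorial statement by induction on $k$, exactly in the spirit of Claim~\ref{matrix}. The first step is to determine, for $\mathbf{i},\mathbf{i}'\in S$ with $\mathbf{i}(1)<\mathbf{i}'(1)$ (say), when a triple $(P(\mathbf{i}),P(\mathbf{i}'),P(\mathbf{i}''))$ forms a hole; by the LEX property the sign of $P(\mathbf{i})-P(\mathbf{i}')$ in each coordinate is governed by the first index $r$ where the two vectors differ, so the inequalities $v(1)<w(1)$, $v(2)<\min\{u(2),w(2)\}$, etc., become statements about the first coordinate of disagreement between pairs among $\mathbf{i},\mathbf{i}',\mathbf{i}''$. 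I would carefully extract from this a "bad triple" condition on indices — something like: $H$ contains three vectors whose mutual first-disagreement coordinates and the relative order of their entries there produce a hole. Because the vectors $\mathbf{v}_1,\dots,\mathbf{v}_4$ encode the signs $(+,+,+),(+,-,-),(+,+,-),(+,-,+)$, the hole condition "$v(2)<\min$ or $v(3)<\min$" will correspond to the middle vector (in $<_1$, i.e. in the first coordinate) being reached from its two neighbours via one of the vectors $\mathbf{v}_2,\mathbf{v}_3,\mathbf{v}_4$ pointing "down" in coordinate $2$ or $3$.

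Once the bad-triple condition is isolated, the second step is the induction. As in Claim~\ref{matrix}, I would peel off the vectors in $H$ with the extreme value ($=k$ or $=1$) in the first coordinate, split into the case where at most one such vector occurs and the case where at least two occur, and in each case build an injection (up to one lost element) from the remaining vectors of $H$ into a copy of the analogous index set $S$ for parameter $k-1$, so that bad-triple-freeness is preserved. The definition of $S$ — the constraints $\mathbf{i}(1)+\mathbf{i}(2)\le k+1$, $\mathbf{i}(2)\ge \mathbf{i}(3)$, $\mathbf{i}(2)\ge \mathbf{i}(4)$ — is tailored precisely so that this peeling works and so that $|S_k|$ satisfies the recursion $|S_k|=|S_{k-1}|+(\text{number of }\mathbf{i}\text{ with first coordinate }k)$, matching the arithmetic identity $|S|=\sum_{i=1}^k (k+1-i)i^2$ already recorded. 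In the case of two extreme vectors, I would use the hole-freeness to forbid a third vector in a certain "conjugate" slab, exactly mirroring Case 2 of Claim~\ref{matrix}, where two elements $(k,b),(k,b')$ of $K$ forbade any $(a,k)$.

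The step I expect to be the main obstacle is the first one: correctly and cleanly reading off the bad-triple condition from the LEX property across all the sign cases. The hole definition involves a disjunction ("$v(2)<\min$ or $v(3)<\min$"), and the first coordinate $\mathbf{i}(1)$, being governed by $\mathbf{v}_1=(+,+,+)$, interacts differently from the others; one has to check which pairs among the three vectors disagree first in coordinate $1$ versus later, and track how the signs in coordinates $2$ and $3$ come out. I would organize this by letting $r_{uv}, r_{vw}, r_{uw}$ denote the first-disagreement coordinates of the three pairs, noting the standard lex fact that the two smallest of these three coincide, and then doing a short case analysis on that common value and on which index is "in the middle" in that coordinate. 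Everything after that — the induction — should be a routine adaptation of the already-proved Claim~\ref{matrix}, so I would keep that part terse and simply point to the analogy.
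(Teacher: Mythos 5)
Your overall plan---use the LEX property to reduce the hole condition to a combinatorial condition on index vectors, then induct on $k$ by peeling off an extreme slab---is the same plan the paper follows, and the first step (translating $\sg(P(\mathbf{i})-P(\mathbf{i}'))$ to the first coordinate of disagreement, via $\mathbf{v}_1,\dots,\mathbf{v}_4$) is exactly right. But the inductive step as you describe it has a genuine gap. You write that in each case you would ``build an injection (up to one lost element) from the remaining vectors of $H$ into a copy of $S_{k-1}$,'' citing Case~2 of Claim~\ref{matrix} as the model. That analogy fails in the case with $\geq 2$ extreme vectors. First, the correct slab to peel is $T_1=\{\mathbf{i}\in S:\mathbf{i}(1)=1\}$ (not $\mathbf{i}(1)=k$): since $\mathbf{i}(1)+\mathbf{i}(2)\le k+1$, shifting $\mathbf{i}(1)\mapsto\mathbf{i}(1)-1$ carries $S\setminus T_1$ onto $S_{k-1}$, whereas $\{\mathbf{i}:\mathbf{i}(1)=k\}$ contains only the single vector $(k,1,1,1)$ and its removal does not reduce the parameter; also the recursion you state, $|S_k|=|S_{k-1}|+\#\{\mathbf{i}:\mathbf{i}(1)=k\}$, is false (the correct increment is $|T_1|=\sum_{j=1}^k j^2$). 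Second, and more importantly: when $|H\cap T_1|\ge 2$, hole-freeness together with the LEX property only forces $H\subset T_1$; it does not single out one element to discard so as to land in $S_{k-1}$. Since $|T_1|=\sum_{j=1}^k j^2\gg k$, you are nowhere near the bound $|H|\le k$ at that point. The paper therefore runs a \emph{second} level of peeling inside $T_1$ on the slab $T_2=\{\mathbf{i}\in T_1:\mathbf{i}(2)=k\}$, and when $|H\cap T_2|\ge 2$ it again shows $H\subset T_2$; only then does it finish, not by another inductive reduction but by a direct argument: $T_2$ is a $k\times k$ grid in coordinates $(\mathbf{i}(3),\mathbf{i}(4))$, and hole-freeness forces $H$ to lie in one row $U_l=\{(1,k,l,m):m\in[k]\}$ or to meet every row at most once. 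None of this nested structure or the terminal grid argument is present in your sketch---you explicitly plan to keep this part ``terse'' by pointing to Claim~\ref{matrix}, but Claim~\ref{matrix} is a 2-dimensional, single-level induction, and the 4-dimensional geometry of $S$ here requires two levels of peeling plus a separate finishing step.

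In short: the approach is the right one, the LEX-translation step would work out as you anticipate, but the inductive step as written would stall after Case~2 (two extreme vectors). To complete the proof you need to recognize that Case~2 only confines $H$ to a lower-dimensional slab, iterate the slab-peeling in coordinate $\mathbf{i}(2)$, and then close the argument on the residual $k\times k$ grid by a direct hole-freeness argument rather than another induction.
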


\begin{proof}
	Let $S=S_{k}$. We prove this claim by induction on $k$. If $k=1$, $S$ contains one element, so there is nothing to prove.
	
	Suppose that $k\geq 2$. Let $T_{1}=\{\mathbf{i}\in S:\mathbf{i}(1)=1\}$ and $H_{1}=H\cap T_{1}$. (See Figure \ref{figure5} for an illustration.) We distinguish two cases.
	
	{\em Case 1:} $|H_{1}|\leq 1$. Define $$H'=\{(i_{1}-1,i_{2},i_{3},i_{4}):(i_{1},i_{2},i_{3},i_{4})\in H\setminus H_{1}\}.$$
Then  $H'\subset S_{k-1}$ and $H'$ does not contain a hole. Hence, we obtain $|H'|\leq k-1$, by the induction hypothesis. On the other hand, $|H'|\geq |H|-1$, which yields that $|H|\leq k$.
	
	{\em Case 2:} $|H_{1}|\geq 2$. In this case, we must have $H=H_{1}$. Otherwise, choose $\mathbf{i},\mathbf{i}'\in H_{1}$, $\mathbf{j}\in H\setminus H_{1}$, and let $u=P(\mathbf{i}),v=P(\mathbf{i}')$, and $w=P(\mathbf{j})$. Suppose without loss of generality that $u(1)<v(1)$. Then $u(1)<v(1)<w(1)$, and by the LEX property we have $w(2)\geq \max\{u(2),v(2)\}$ and $w(3)\geq\max\{u(3),v(3)\}$. Therefore, if $(u,v,w)$ is not a hole, then we must have $u(2)<v(2)<w(2)$ and $u(3)<v(3)<w(3)$. However, this means that $\sg(v-u)=(1,1,1)=\mathbf{v}_{1}$, which contradicts the LEX property, as $\mathbf{i}(1)=\mathbf{i}'(1)$.
	
	Hence, we can suppose that $H=H_{1}\subset T_{1}$. Let $T_{2}=\{\mathbf{i}\in S:\mathbf{i}(1)=1, \mathbf{i}(2)=k\}\subset T_{1}$ and $H_{2}=H\cap T_{2}$. Again, we distinguish two subcases.
	
	{\em Subcase 1:} $|H_{2}|\leq 1$. Define $H'=H\setminus H_{2}$. Then $H'\subset S_{k-1}$ and $H'$ does not contain a hole, which yields, by the induction hypothesis, that $|H'|\leq k-1$. On the other hand, $|H'|\geq |H|-1$, so $|H|\leq k$.
	
	{\em Subcase 2:} $|H_{2}|\geq 2$. In this case, we show that $H=H_{2}$. Otherwise, let $\mathbf{i},\mathbf{i}'\in H_{1}$, $\mathbf{j}\in H\setminus H_{2}$, and $u=P(\mathbf{j}),v=P(\mathbf{i})$ and $w=P(\mathbf{i}')$. Suppose without loss of generality that $v(1)<w(1)$. Then $u(1)<v(1)<w(1)$, $u(2)\geq \max\{v(2),w(2)\}$, and $u(3)\geq\max\{v(3),w(3)\}$, by the LEX property. Thus, $(u,v,w)$ is a hole, unless $u(2)>v(2)>w(2)$ and $u(3)>v(3)>w(3)$, which would mean that the $\sg(w-v)=(1,-1,-1)=\mathbf{v}_{2}$. However, this contradicts the LEX property, because $\mathbf{i}(2)=\mathbf{i}'(2)$.
	
	Hence, we can suppose that $H=H_{2}\subset T_{2}$. Here, $T_{2}$ is partitioned into $k$ sets $U_{1},\dots,U_{k}$, where $U_{l}=\{(1,k,l,m):m=1,\dots,k\}$ for $l=1,\dots,k$. Note that $|U_{l}|=k$. We show that $H$ is either completely contained in one of the sets $U_{l}$, or $H$ intersects each of $U_{1},\dots,U_{k}$ in at most one element. In either case, we get $|H|\leq k$. Suppose to the contrary that there exists $l\neq l'$ and three elements $\mathbf{i},\mathbf{i}'\in U_{l}\cap H$, $\mathbf{j}\in U_{l'}\cap H$. Let $u=P(\mathbf{i})$, $v=P(\mathbf{i}')$, and $w=P(\mathbf{j})$. Without loss of generality, suppose that $u(1)<v(1)$. Now there are two cases depending on the order of $l$ and $l'$. If $l<l'$, then by the LEX property $u(1)<v(1)<w(1)$, $v(2)<u(2)<w(2)$, and $w(3)<u(3)<v(3)$, so $(u,v,w)$ is a hole. If $l'<l$, then $w(1)<u(1)<v(1)$, $w(2)<v(2)<u(2)$, and $u(3)<v(3)<w(3)$, so $(w,u,v)$ is a hole.		
\end{proof}

The rest of the proof of Theorem~\ref{maindmagical} is very similar to that of the proof of Theorem~\ref{mainmagical}.
First, we set a few parameters, to simplify the discussion. Let $t=24k^{4}\log k$, $\lambda=1/k^{4}$, $h=t^{k^{4}}k^{4k^{4}+16}$, $n=6h$, and $p=t/n$.

For each $\mathbf{i}\in S$, let $A_{\mathbf{i}}$ be a set of $n$ arbitrary points with distinct coordinates, whose distances from $P(\mathbf{i})$ are smaller than $1/2$. Let $V=\bigcup_{\mathbf{i}\in S} A_{\mathbf{i}}$. The main property of the sets $A_{\mathbf{i}}$ that we need is that for any $\mathbf{i},\mathbf{i}'\in S$ such that $\mathbf{i}\neq\mathbf{i}'$, and for every $u\in A_{\mathbf{i}}$ and $v\in A_{\mathbf{i}'}$, we have $\sg(u-v)=\sg(P(\mathbf{i})-P(\mathbf{i}'))$. In other words, the relative position of $u$ and $v$ only depends on $\mathbf{i}$ and $\mathbf{i}'$.

 Let $<_{1},<_{2},<_{3}$ be the three total orderings on $V$ given by the order of the $x,y,z$-coordinates of the points of $V$, respectively.

 As in Section~\ref{grounded:lower}, we call a pair of vertices  $\{u,v\}$ in $V$ \emph{available} if $u\in A_{\mathbf{i}}$, $v\in A_{\mathbf{i}'}$ and $\mathbf{i}\neq\mathbf{i}'$.

Let $G_{0}$ be the graph on $V$, in which each available pair of vertices is joined by an edge independently with probability $p$. Let $G^1=G^{1}_{<_{1},<_{2}}$ and $G^2=G^{2}_{<_{1},<_{3}}$ be the minimal magical graphs containing the edges of $G_{0}$, respectively, and let $G'$ be the graph on vertex set $V$ with $E(G')=E(G^{1})\cap E(G^{2})$. Then $G'_{<_{1},<_{2},<_{3}}$ is a double-magical graph.

\begin{claim}\label{independent:dmagical}
	 With probability at least $2/3$, the graph $G'$ has no independent set larger than $(1+\lambda)n$.
\end{claim}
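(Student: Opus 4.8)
The plan is to mimic Claim~\ref{independent} from Section~\ref{grounded:lower} almost verbatim, since $G_0$ is a subgraph of $G'$, so it suffices to show that with probability at least $2/3$ the random graph $G_0$ itself has no independent set of size larger than $(1+\lambda)n$. The only change is that the relevant parameters now carry exponent $k^4$ rather than $k^2$: we have $|S| = \frac{k+1}{2}\binom{k+2}{3} \le k^4$, $\lambda = 1/k^4$, $t = 24k^4\log k$, $n = 6h$, $p = t/n$.

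\textbf{Key steps.} First I would establish the lower bound on the number of available pairs inside any candidate independent set $I$ with $|I| > (1+\lambda)n$. For any $u \in A_{\mathbf i}$, the pair $\{u,v\}$ is available for every $v \in I \setminus A_{\mathbf i}$, and since $|A_{\mathbf i}| = n$ we have $|I \setminus A_{\mathbf i}| \ge |I| - n > \lambda n$; summing over $u$ and dividing by $2$ gives at least $|I|\lambda n / 2 > \lambda n^2/2$ available pairs with both endpoints in $I$. Hence $\Pr(I \text{ independent in } G_0) \le (1-p)^{\lambda n^2/2} < e^{-p\lambda n^2/2} = e^{-t\lambda n/2}$. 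Second, I would union-bound over all $(1+\lambda)n$-subsets of $V$: since $|V| = |S|\, n \le k^4 n$, we get $\binom{|V|}{(1+\lambda)n} < \left(\frac{e|V|}{(1+\lambda)n}\right)^{(1+\lambda)n} < (ek^4)^{(1+\lambda)n}$. Third, I would combine these to bound the failure probability by $(ek^4)^{(1+\lambda)n} e^{-t\lambda n/2} = \exp\big((1 + 4\log k)(1+\lambda)n - t\lambda n/2\big)$, and check that with $t = 24k^4\log k$ and $\lambda = 1/k^4$ the exponent $(1+4\log k)(1+\lambda)n - 12 n \log k$ is negative and large enough that the whole expression is less than $1/3$ (using $k \ge 2$, so $12\log k$ dominates $(1+4\log k)(1+\lambda)$ comfortably). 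Therefore $G_0$, and a fortiori $G'$, has no independent set of size $(1+\lambda)n$ with probability at least $2/3$.

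\textbf{Main obstacle.} There is essentially no obstacle here: the argument is a routine first-moment/union-bound calculation, identical in structure to Claim~\ref{independent}. The only point requiring a moment's care is verifying that the chosen value $t = 24k^4\log k$ (as opposed to $20k^2\log k$ in the grounded case) is large enough to absorb the factor $(1+4\log k)$ coming from the larger number of point-classes $|S| \le k^4$; this is a straightforward inequality check valid for all $k \ge 2$.
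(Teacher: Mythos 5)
Your proof is correct and matches the paper's argument, which itself simply says ``repeating the same argument as in the proof of Claim~\ref{independent}'' and writes down the same chain of inequalities with $k^4$ in place of $k^2$. Nothing further to add.
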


\begin{proof}
	Repeating the same argument as in the proof of Claim \ref{independent}, we obtain that the probability that $V$ has a $(1+\lambda)n$-element independent subset is at most
	$$\binom{|V|}{(1+\lambda)n}(1-p)^{\lambda n^{2}/2}<(ek^{4})^{(1+\lambda)n}e^{-t\lambda n/2}=e^{(1+4\log k)(1+\lambda)n-t\lambda n/2}<1/3.$$
\end{proof}

Next, we bound the expected number of holes that form a triangle in $G'$.

\begin{claim}
	Let $N$ denote the number of holes in $V$ that induce a triangle in $G'$. Then we have $\mathbb{E}(N)<h$.
\end{claim}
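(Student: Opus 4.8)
The plan is to mirror the proof of Claim~\ref{holes} almost verbatim, replacing $2$-dimensional mountain-paths by $3$-dimensional ones and tracking the slightly larger parameters. First I would fix a hole $(u,v,w)$ in $V$, say with $u\in A_{\mathbf{i}_1}$, $v\in A_{\mathbf{i}_2}$, $w\in A_{\mathbf{i}_3}$; as before, we may assume $\mathbf{i}_1,\mathbf{i}_2,\mathbf{i}_3$ are pairwise distinct, since otherwise two of the three points share all coordinates up to small perturbation and the triple cannot be a hole inducing a triangle. If $\{u,v,w\}$ induces a triangle in $G'=G'_{<_1,<_2,<_3}$, then each of $uv$, $vw$, $uw$ lies in $E(G^1_{<_1,<_2})\cap E(G^2_{<_1,<_3})$, so by Lemma~\ref{mountainpaths} (applied to both $G^1$ and $G^2$) there are mountain-paths inside $G_0$ realizing each of these three edges in each of the two double-orders. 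Concatenating the paths for $uv$ and $vw$ gives a second path from $u$ to $w$, distinct from the path for $uw$: the key point, exactly as in Claim~\ref{holes}, is that the $uw$-path avoids $v$ because $v<_2 u,w$ (or $v<_3 u,w$), while each $<_1$-monotone path meets each block $A_{\mathbf{i}}$ in at most one vertex, so the $uv$- and $vw$-paths meet only at $v$. Hence the union graph $P$ of all the $G_0$-edges used is connected, contains $u,v,w$, has all its vertices in $V$, lies in at most $|S|$ blocks, and satisfies $|E(P)|\ge |V(P)|$ (it is connected but not a tree).

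Next I would bound the probability of a fixed hole inducing a triangle by a union bound over all such graphs $P$. For a fixed $P$ with $m$ vertices, $\Pr(P\subset G_0)=p^{|E(P)|}\le p^{m}$, and the number of such $P$ is at most $\binom{|V|}{m-3}<(|S|\,n)^{m-3}\le (k^4 n)^{m-3}$ since $|S|\le k^4$. Summing over $3\le m\le |S|\le k^4$ gives
$$\sum_{P}\Pr(P\subset G_0)\le \sum_{m=3}^{k^4} p^{m}(k^4 n)^{m-3}
= p^3\sum_{m=3}^{k^4}(pk^4 n)^{m-3}
= \frac{t^3}{n^3}\sum_{j=0}^{k^4-3}(t k^4)^{j}
< t^{k^4}k^{4k^4+2}n^{-3},$$
using $p n = t$ and crudely bounding the geometric-type sum by its largest term times $k^4$ (the exponents of $t$ and $k$ are generous, so any such loose bound suffices). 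Finally, the number of holes in $V$ is at most $\binom{|V|}{3}<|V|^3<k^{12}n^3$ (here $|V|=|S|n<k^4 n$, so $|V|^3<k^{12}n^3$), whence by linearity of expectation
$$\mathbb{E}(N)< k^{12}n^3\cdot t^{k^4}k^{4k^4+2}n^{-3}=t^{k^4}k^{4k^4+14}<t^{k^4}k^{4k^4+16}=h.$$

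\textbf{Main obstacle.} The only genuinely nonroutine point — and it is the same subtlety that made Claim~\ref{holes} work — is justifying that the combined graph $P$ is not a tree, i.e. that the $uw$-mountain-path really is a \emph{second} $u$--$w$ path and not a subpath of the $uv$-then-$vw$ concatenation. This rests on two facts: a $<_1$-increasing mountain-path visits each block $A_{\mathbf{i}}$ at most once (so the $uv$- and $vw$-paths share only $v$), and the defining inequality of a hole, $v<_2 u,w$ or $v<_3 u,w$, forbids $v$ from lying on any mountain-path between $u$ and $w$ in the corresponding double-order. Everything else is bookkeeping with the parameters $t=24k^4\log k$, $\lambda=1/k^4$, $h=t^{k^4}k^{4k^4+16}$, $n=6h$, $p=t/n$, which have been chosen with enough slack that the crude geometric-sum bound above goes through.
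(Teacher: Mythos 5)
Your proposal is correct and follows essentially the same approach as the paper: you invoke Lemma~\ref{mountainpaths} for each of the magical graphs $G^1_{<_1,<_2}$ and $G^2_{<_1,<_3}$ to produce six mountain-paths in $G_0$, observe via the hole condition ($v<_2 u,w$ or $v<_3 u,w$) that the union of these paths is connected but not a tree (hence $|E(P)|\ge|V(P)|$), and then run the same first-moment union bound. Your intermediate numerical bound $t^{k^4}k^{4k^4+2}n^{-3}$ is a bit looser than what your own computation actually gives (roughly $t^{k^4}k^{4k^4-8}n^{-3}$), and it differs slightly from the paper's stated $t^{k^4}k^{4k^4+4}n^{-3}$, but all of these are comfortably below $h\cdot n^{-3}\cdot k^{-12}$ after multiplying by the $<k^{12}n^3$ holes, so the final conclusion $\mathbb{E}(N)<h$ is reached the same way.
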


\begin{proof}
	We proceed just like we did in the proof of Claim \ref{holes}. Let $(u,v,w)$ be a hole in $V$. We need to upper bound the probability that $(u,v,w)$ induces a triangle in $G'$. If $uv,vw,uw\in E(G')$, then $uv,vw,uw\in E(G^{1})\cap E(G^{2})$. As $uv,vw,uw\in E(G^{1})$, there exist three mountain-paths $P_{uv}^{1},P^{1}_{vw},P^{1}_{uw}$ in $G_{<_{1},<_{2}}$ with endpoints $\{u,v\}$, $\{v,w\}$, and $\{u,w\}$, respectively (see the definition above Lemma~\ref{mountainpaths}). As $uv,vw,uw\in E(G^{2})$, there exist three mountain-paths $P_{uv}^{2},P^{2}_{vw},P^{2}_{uw}$ in $G_{<_{1},<_{3}}$ with endpoints $\{u,v\}$, $\{v,w\}$, and $\{u,w\}$, respectively. Note that   $V(P_{uv}^{1})\cap V(P_{vw}^{1})=V(P_{uv}^{2})\cap V(P_{vw}^{2})=\{v\}$.
	
	Consider the graph $P=P_{uv}^{1}\cup P_{vw}^{1}\cup P_{uw}^{1}\cup P_{uv}^{2}\cup P_{vw}^{2}\cup P_{uw}^{2}$. This graph is connected, but it is not a tree. Indeed, there are two different paths between $u$ and $w$. This is true, because  $(u,v,w)$ is a hole, so we have either $v<_{2}u$ and $v<_{2}w$, or $v<_{3}u$ and $v<_{3}w$. If $v<_{2}u$ and $v<_{2}w$, then $P_{uw}^{1}$ does not contain $v$, so $P_{uw}^{1}$ and $P_{uv}^{1}\cup P_{vw}^{1}$ are two distinct paths between $u$ and $w$. Analogously, if $v<_{3}u$ and $v<_{3}w$, then $P_{uw}^{2}$ does not contain $v$, so $P_{uw}^{2}$ and $P_{uv}^{2}\cup P_{vw}^{2}$ are two distinct paths between $u$ and $w$.
	
	Since $P$ is a connected graph which is not a tree, we have $|E(P)|\geq |V(P)|$. From this point, we can mimic the calculations from Claim \ref{holes}.
	
	Let $\mathcal{P}$ be the set all graphs $P$ with the above property which appear in $G_{0}$ with positive probability. Then
	\begin{align*}
	\mathbb{P}(\{u,v,w\}\mbox{ induces a triangle in }G')&=\mathbb{P}(P\mbox{ is a subgraph of }G_{0}\mbox{ for some }P\in\mathcal{P})\\
	&\leq \sum_{P\in\mathcal{P}}\mathbb{P}(P\mbox{ is a subgraph of }G_{0}).
	\end{align*}
	
	For every $P\in\mathcal{P}$, any edge of $P$ is present in $G_{0}$ independently with probability $p$, so the probability that $P$ is a subgraph of $G_{0}$ is $p^{|E(P)|}$, which is at most $p^{|V(P)|}$. The number of graphs in $\mathcal{P}$ with exactly $m$ vertices is at most $\binom{|V|}{m-3}<(k^{4}n)^{m-3}$, as each member of $\mathcal{P}$ contains the three vertices $u,v,w$. Finally, every member of $\mathcal{P}$ has at most $|S|\leq k^{4}$ vertices, so that we can write
	
	$$\sum_{P\in\mathcal{P}}\mathbb{P}(P\mbox{ is a subgraph of }G_{0})\leq \sum_{m=3}^{k^{4}}p^{m}(k^{4}n)^{m-3}<t^{k^{4}}k^{4k^{4}+4}n^{-3}.$$
	As the number of holes in $V$ is at most $\binom{|V|}{3}<|V|^{3}<k^{12}n^{3}$, we obtain
	$$\mathbb{E}(N)<t^{k^{4}}k^{4k^{4}+16}=h.$$
\end{proof}

Applying Markov's inequality, the probability that $V$ contains more than $3h$ holes that induce a triangle in $G'$ is at most $1/3$. This means that there exists a magical graph $G'$ on $V$ such that $G'$ has no independent set of size $(1+\lambda)n$, and $G$ contains at most $3h$ triangles whose vertices form a hole. By deleting a vertex of each such hole in $G'$, we get a magical graph $G$ with at least $|S|n-3h$ vertices, which has no triangle that forms a hole, and no independent set of size larger than $(1+\lambda)n$.

We show  that $\chi(G)\geq |S|=\frac{k+1}{2}\binom{k+2}{3}$. Otherwise, if $\chi(G)\leq |S|-1$, then $G$ contains an independent set of size
$$\frac{|V(G)|}{|S|-1}\geq \frac{|S|n-3h}{|S|-1}=\left(1+\frac{1}{|S|-1}\right)n-\frac{3h}{|S|-1}>(1+\lambda)n,$$
a contradiction.

It remains to prove that $G$ has no clique of size $k+1$. Suppose that $C\subset V$ is a clique in $G$. Then $C$ intersects each $A_{\mathbf{i}}$ in at most one vertex for $\mathbf{i}\in S$, and $C$ does not contain a hole. Let $K=\{\mathbf{i}\subset S:A_{\mathbf{i}}\cap C\neq \emptyset\}$. The condition that $C$ does not contain a hole implies that $K$ does not contain a hole. But then, by Lemma \ref{hole_upperbound}, we have $|C|=|K|\leq k$. This completes the proof of Theorem \ref{maindmagical}.

\section{Concluding remarks}\label{sect:remarks}

We proved that best $\chi$-bounding function for the family of disjointness graphs of $x$-monotone curves satisfies $f(k)=\Theta(k^{4})$. After the main results presented in this paper, it seems reasonable to ask that what is the precise value of $f$.

\begin{problem}
	Let $f(k)$ denote the smallest $m$ such that for any collection $\mathcal{C}$ of $x$-monotone curves, if the disjointness graph $G$ of $\mathcal{C}$ satisfies $\omega(G)=k$, then $\chi(G)\leq m$. Determine $f(k)$.
\end{problem}

The results of our paper are partially motivated by the problem of Larman et al. \cite{LMPT} discussed in Section \ref{sect:grounded}. That is, what can we say about the order of the function $g(n)$, where $g(n)$ denotes the maximal $m$ such that every collection of $n$ convex sets contains either $m$ pairwise intersecting, or $m$ pairwise disjoint elements. One way to approach this problem would be to consider the corresponding question for magical graphs.

\begin{problem}
	Let $h(n)$ denote the maximal $m$ such that any magical graph on $n$ vertices contains either a clique or an independent set of size $m$. Determine $h(n)$.
\end{problem}

We have $h(n)\gtrapprox 1.26n^{1/3}$ by Theorem \ref{upperbound}. By the argument used for the proof of Theorem \ref{cor2}, we obtain that if $h(n)\geq n^{\alpha}$ holds, then $g(n)\geq n^{\alpha/(2\alpha+1)}$. Any improvement over the best known lower bound on $h(n)$ would yield a better lower bound on $g(n)$ than the roughly $n^{1/5}$ bound in \cite{LMPT}; see also Corollary~\ref{corcor}.

\begin{figure}[t]
	\begin{center}
		\begin{tikzpicture}[scale=1]
		
		\node[vertex, label=above:$1$] (A1) at (0,0) {};
		\node[vertex, label=below:$2$] (A2) at (1,0) {};
		\node[vertex, label=below:$3$] (A3) at (2,1) {};
		\node[vertex, label=below:$4$] (A4) at (3,1) {};
		\node[vertex, label=above:$5$] (A5) at (2,-1) {};
		\node[vertex, label=above:$6$] (A6) at (3,-1) {};
		\node[vertex, label=above:$7$] (A7) at (4,0) {};
		\node[vertex, label=below:$8$] (A8) at (5,0) {};
		
		\draw (A1) -- (A2) -- (A3) -- (A4) -- (A7) -- (A8) ;
		\draw (A2) -- (A5) -- (A6) -- (A7) ;
		\draw (A2) -- (A7) ;
		\draw (A1) -- (A5) -- (A7) ; \draw (A2) -- (A4) -- (A8) ;
		\draw (A1) to[out=-90,in=-90, distance=2cm] (A7) ;
		\draw (A2) to[out=90,in=90, distance=2cm] (A8) ;

		\end{tikzpicture}
		\caption{A semi-comparability graph $G_{<_{1}}$ for which there is no $<_{2}$ such that $G_{<_{1},<_{2}}$ is magical. The numbers of the vertices induce the ordering $<_{1}$.}
		\label{figure3}
	\end{center}
\end{figure}
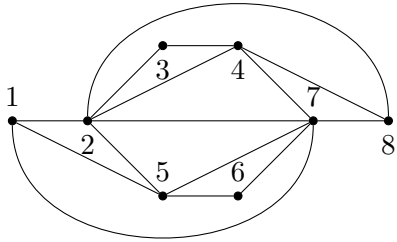
\bigskip

A \emph{0--1 curve} is a curve $C$ whose two endpoints lie on the vertical lines $\{x=0\}$ and $\{x=1\}$, and $C$ is contained in the strip $\{0\leq x\leq 1\}$. The disjointness graph of a collection of 0--1 curves is a comparability graph, and every comparability graph can be realized as the disjointness graph of some collection of 0--1 curves, see \cite{SiSiUr, Lo, PT2}.

We have seen in Lemma \ref{semiposet0} that the disjointness graph of grounded $x$-monotone curves is a semi-comparability graph. One might wonder if the converse is also true, that is, if every semi-comparability graph can be realized as the disjointness graph of grounded $x$-monotone curves. Or rather, if $G_{<_{1}}$ is a semi-comparability graph, then there exists a total ordering $<_{2}$ on $V(G)$ such that $G_{<_{1},<_{2}}$ is magical. Unfortunately, this is not true: a counterexample is presented in Figure \ref{figure3}.

Nevertheless, it seems that semi-comparability graphs capture many properties of disjointness graphs of grounded $x$-monotone curves.  In an upcoming work \cite{PT3}, we prove (among other results) the following property of semi-comparability graphs. If $G$ is a semi-comparability graph on $n$ vertices, then either $G$ contains a bi-clique of size $\Omega(n)$, or the complement of $G$ contains a bi-clique of size $\Omega(n/\log n)$. This property is known to hold \cite{FP} for the disjointness graph of an arbitrary family of $n$ curves, but its proof is highly geometric.


\begin{thebibliography}{9}
%\bibitem{AA} P.~K.~Agarwal, N.~Alon, B.~Aronov, B., and S.~Suri, {\em Can visibility graphs be represented compactly?}, Discrete Comput. Geom. {\bf 12} (1994): 347--365.

\bibitem{AgMu}  P.~K.~Agarwal and N.~H.~Mustafa, {\em Independent set of intersection graphs of convex objects in 2D,} {Comput. Geom.} {\bf 34} (2) (2006): 83--95.

\bibitem{AG}	
E. Asplund and B. Gr\"{u}nbaum,
\emph{On a colouring problem,}
 Math. Scand. {\bf 8} (1960): 181--188.	

\bibitem{Bra} P.~Brass, W.~Moser, and J.~Pach, {\em Research Problems in Discrete Geometry,} Springer, New York, 2005.

\bibitem{B}
J. P. Burling,
\emph{On Coloring Problems of Families of Prototypes (PhD thesis),}
University of Colorado, Boulder, 1965.

\bibitem{CaCa}  S.~Cabello, J.~Cardinal, and S.~Langerman, {\em The clique problem in ray intersection graphs,} Discrete Comput. Geom. {\bf 50} (3) (2013): 771--783.

\bibitem{Cer} M. R. Cerioli, L. Faria, T. O. Ferreira, and F. Protti, {\em On minimum clique partition and maximum independent set on unit disk graphs and penny graphs: complexity and approximation,} Electr. Notes Discr. Math. {\bf 18} (2004): 73--79.

%\bibitem{Chl} M. Chleb\'{\i}k and J. Chleb\'{\i}kova, {\em Approximation hardness of optimization problems in intersection graphs of $d$-dimensional boxes}, 16th Ann. ACM-SIAM Symp. Discrete Algorithms (SODA 2005), 267--276, 2005.

\bibitem{D}
R. P. Dilworth,
\emph{A decomposition theorem for partially ordered sets,}
Annals Math. {\bf 51} (2) (1950): 161--166.

\bibitem{Dum}  A.~Dumitrescu and J.~Pach, {\em Minimum clique partition in unit disk graphs,} Graphs Combin. {\bf 27} (3) (2011): 399--411.

\bibitem{Eid} S.~Eidenbenz and C.~Stamm, {\em MAXIMUM CLIQUE and MINIMUM CLIQUE PARTITION in Visibility Graphs,} IFIP TCS 2000: 200--212.
	
\bibitem{LMPT} D. Larman, J. Matou\v{s}ek, J. Pach, and J. T\"{o}r\H{o}csik,
\emph{ A Ramsey-type result for convex sets,}
Bull. London Math. Soc. {\bf 26} (1994): 132--136.

\bibitem{FP11} J.~Fox and J.~Pach, {\em Computing the independence number of intersection graphs}, in: {Proc. 22nd Ann. ACM-SIAM Symp. on Discrete Algorithms (SODA 2011)}, SIAM, Philadelphia, PA, 2011, 1161--1165.

\bibitem{FP}
J. Fox and J. Pach,
\emph{String graphs and incomparability graphs,}
Advances Math. {\bf 230} (2012): 1381--1401.

\bibitem{F}
Z. F\"{u}redi,
\emph{The maximum number of unit distances in a convex $n$-gon,}
J. Combin. Theory, Ser. A {\bf 55} (2) (1990): 316--320.


\bibitem{GaJ} M.~R.~Garey and D.~S.~Johnson, {\em Computers and Interactibility: A Guide to the Theory
of NP Completeness}, W.~H.~Freeman and Company, San Fransisco, 1979.

\bibitem{Gav} F. Gavril,
\emph{Algorithms for a maximum clique and a maximum independent set of a circle graph,}
Networks {\bf 3} (1973): 261--273.

%\bibitem{Gol} M.~C.~Golumbic, {\em Algorithmic Graph Theory and Perfect Graphs,} Academic Press, New York, 1980.

\bibitem{Gy}
A. Gy\'{a}rf\'as,
\emph{On the chromatic number of multiple interval graphs and overlap graphs,}
Discrete Math. {\bf 55} (2) (1985): 161--166. Corrigendum: Discrete Math. {\bf 62} (3) (1986): 333.

\bibitem{Gy1}
A. Gy\'{a}rf\'as, {\em Problems from the world surrounding perfect graphs,} Zastos. Mat. {\bf 19} (1987): 413--441.

%\bibitem{K} G. K\'arolyi, \emph{On point covers of parallel rectangles,} Periodica Mathematica Hungarica {\bf 23} (2) (1991): 105--107.

\bibitem{KeS} M.~Keila and L.~Stewart,
{\em Approximating the minimum clique cover and other hard problems in subtree filament graphs,}
Discrete Appl. Math. {\bf 154} (14) (2006): 1983--1995.

\bibitem{Chaya} C. Keller, S. Smorodinsky, and G. Tardos, {\em On Max-Clique for intersection graphs of sets and the Hadwiger-Debrunner numbers,} in: {Proc. 28th Annu. Symp. Discrete Algs. (SODA 2017)}, 2017, 2254 -- 2263.

%\bibitem{Kel} E.~Kellerman, {\em Determination of keyword conflict}, IBM Tech. Disclosure Bull. {\bf 16} (2) (1973): 544--546.

\bibitem{KT}
D. Kor\'andi and I. Tomon,
\emph{Improved Ramsey-type results in comparability graphs,}
arXiv:1810.00588 preprint.

\bibitem{Ko1} A.~Kostochka, {\em On upper bounds on the chromatic numbers of graphs,}
Transactions Inst. Math., Vol. {\bf 10}, Siberian Branch of the Acad. Sci. USSR (1988): 204--226 (in Russian).

\bibitem{Ko2} A.~Kostochka, {\em Coloring intersection graphs of geometric figures with a given clique number,} in: Towards a Theory of Geometric Graphs, Contemp. Math. {\bf 342}, Amer. Math. Soc., Providence, RI, 127--138, 2004.

\bibitem{KK} A.~Kostochka and J.~Kratochv\'{i}l, {\em Covering and coloring polygon-circle graphs,} Discrete Math. {\bf 163} (1-3) (1997): 299--305.

\bibitem{KoN} A.~Kostochka and J.~Ne\v set\v ril, {\em Chromatic number of geometric intersection graphs}, in: M.~Klazar (Ed.), 1995 Prague Midsummer Combinatorial Workshop, KAM Series {\bf 95–309}, Charles University, Prague (1995), 43--45.

\bibitem{KrM} J.~Kratochv\'{\i}l and J.~Matou\v sek, {\em Intersection graphs of segments,} {J. Combin. Theory Ser. B} {\bf 62} (2) (1994): 289--315.

\bibitem{KrNe}  J.~Kratochv\'{\i}l and J.~Ne\v set\v ril, {\em INDEPENDENT SET and CLIQUE problems in intersection-defined classes of graphs,} {Comment. Math. Univ. Carolin.} {\bf 31} (1) (1990): 85--93.

\bibitem{Ky} J. Kyn\v{c}l, \emph{Ramsey-type constructions for arrangements of segments,} European J. Combin. {\bf 33} (3) (2012): 336--339.

\bibitem{Lar}
D.~Larman, J.~Matou\v sek, J.~Pach, and J.~T\"or\H ocsik, {\em A Ramsey-type result for convex sets,} Bull. London Math. Soc. {\bf 26} (2) (1994): 132--136.

\bibitem{Las}  M.~Laso\'n, P.~Micek, A.~Pawlik, and B.~Walczak, {\em Coloring intersection graphs of arc-connected sets in the plane},  Discrete Comput. Geom. {\bf 52} (2) (2014): 399--415.

\bibitem{Lo}
L.~Lov\'asz, {\em Perfect graphs,} in: Selected Topics in Graph Theory, vol. {\bf 2}, Academic Press, London, 1983, 55--87.

\bibitem{M}
 S. McGuinness,
 \emph{Colouring arcwise connected sets in the plane I,}
Graphs and Combinatorics {\bf 16} (4) (2000): 429--439.

\bibitem{MWW}
T. M\"{u}tze, B. Walczak, and V. Wiechert,
\emph{Realization of shift graphs as disjointness graphs of 1-intersecting curves in the plane,}
arXiv:1802.09969

\bibitem{PT}
J. Pach and G. Tardos,
\emph{Forbidden paths and cycles in ordered graphs and matrices,}
Israel Journal of Mathematics {\bf 155} (2006): 359--380.

\bibitem{PTT}
J. Pach, G. Tardos, and G. T\'{o}th, \emph{Disjointness graphs of segments,} in: 33rd Internat. Symp.  Comput. Geom. (SoCG 2017), vol. {\bf 77} Leibniz Internat. Proc. Informatics (LIPIcs), 59:1--15, Leibniz-Zentrum f\"ur Informatik, Dagstuhl, 2017.

\bibitem{PT2}
J. Pach and G. T\'oth,
\emph{Comments on Fox News,}
Geombinatorics {\bf 15} (2006): 150--154.

\bibitem{PT3}
J. Pach and I. Tomon,
\emph{Large bi-cliques in the intersection graphs of $x$-monotone curves and ordered graphs,}
in preparation.

\bibitem{PaT}  J.~Pach and J.~T\"or\H ocsik, {\em Some geometric applications of Dilworth's theorem,} Discrete Comput. Geom. {\bf 12} (1) (1994): 1--7.

\bibitem{PKKLMTW}
A. Pawlik, J. Kozik, T. Krawczyk, M. Laso\'{n}, P. Micek, W. T. Trotter, and B. Walczak,
\emph{Triangle-free intersection graphs of line segments with large chromatic number,}
 J. Combin. Theory, Ser. B {\bf 105} (2014): 6--10.

%\bibitem{Pie} H.-P.~Piepho, {\em An algorithm for a letter-based representation of all-pairwise comparisons}, J. Comput. Graphical Statistics {\bf 13} (2) (2004), 456--466.

%\bibitem{RVM} S.~Rajagopalan, M.~Vachharajani, and S.~Malik, {\em Handling irregular ILP within conventional VLIW schedulers using artificial resource constraints}. In: Proc. CASES, ACM Press, 2000, 157--164.

\bibitem{Rok1}  A.~Rok and B.~Walczak, {\em Outerstring graphs are $\chi$-bounded,} in: 30th Internat. Symp. Comput. Geom. (SoCG'14), 136--143, ACM, New York, 2014.

\bibitem{Rok2}  A.~Rok and B.~Walczak, {\em Coloring curves that cross a fixed curve,} in: 33rd Internat. Symp. Comput. Geom., Art. No. 56, 15 pp., LIPIcs. Leibniz Int. Proc. Inform., 77, Schloss Dagstuhl, Leibniz-Zent. Inform., Wadern, 2017.

\bibitem{SiSiUr} J.~B.~Sidney, S.~J.~Sidney, and J.~Urrutia, {\em Circle orders, $n$-gon orders and the crossing number,} {Order} {\bf 5} (1) (1988): 1--10.

\bibitem{Suk} A.~Suk, {\em Coloring intersection graphs of $x$-monotone curves in the plane,} Combinatorica {\bf 34} (4) (2014): 487--505.

\bibitem{Sup}  K.~J.~Supowit, {\em Topics in Computational Geometry, PhD Thesis}, University of Illinois at Urbana-Champaign, Report UIUCDCS-R-81-1062, 1981.

\end{thebibliography}
\end{document}